\newcommand{\bB}{{\mathbb{B}}}
\newcommand{\bC}{{\mathbb{C}}}
\newcommand{\bD}{{\mathbb{D}}}
\newcommand{\bM}{{\mathbb{M}}}
\newcommand{\bR}{{\mathbb{R}}}
\newcommand{\bS}{{\mathbb{S}}}
\newcommand{\bT}{{\mathbb{T}}}
  \newcommand{\A}{{\mathcal{A}}}
  \newcommand{\F}{{\mathcal{F}}}
\renewcommand{\H}{{\mathcal{H}}}
  \newcommand{\M}{{\mathcal{M}}}
  \newcommand{\T}{{\mathcal{T}}}
\newcommand{\rC}{\mathrm{C}}
\newcommand{\eps}{\varepsilon}
\renewcommand{\phi}{\varphi}
\newcommand{\upchi}{{\raise.35ex\hbox{$\chi$}}}
\newcommand{\ol}{\overline}
\newcommand{\qand}{\quad\text{and}\quad}
\newcommand{\at}{\operatorname{at}}
\newcommand{\AC}{\operatorname{AC}}
\newcommand{\SG}{\operatorname{SG}}
\newcommand{\diag}{\operatorname{diag}}
\newtheorem{lemma}{Lemma}[section]
\newtheorem{theorem}[lemma]{Theorem}
\newtheorem{proposition}[lemma]{Proposition}
\newtheorem{corollary}[lemma]{Corollary}
\newtheorem{theoremx}{Theorem}
\theoremstyle{definition}
\newtheorem{example}{Example}
\date{\today}
\author{David P. Blecher}
\address{Department of Mathematics, University of Houston}
\email{dpbleche@central.uh.edu\vspace{-2ex}}
\author{Rapha\"el Clou\^atre}
\address{Department of Mathematics, University of Manitoba, Winnipeg, Manitoba, Canada R3T 2N2}
\email{raphael.clouatre@umanitoba.ca\vspace{-2ex}}
\thanks{R.C. was partially supported by an NSERC Discovery Grant. D.B. was partially supported by  a Simons Foundation Travel/Collaboration grant
and NSF Grant DMS-2154903.}
\subjclass[2000]{46L52, 46L51, 47L30}
\title[Null projections and noncommutative function theory]{Null projections and noncommutative function theory in operator algebras}
\begin{document}
\begin{abstract}
We study projections in the bidual of a $\rC^*$-algebra $B$ that are null with respect to a subalgebra $A$, that is projections $p\in B^{**}$ satisfying $|\phi|(p)=0$ for every $\phi\in B^*$ annihilating $A$. In the separable case, $A$-null  projections are precisely the 
peak projections in the bidual of $A$ at which the subalgebra $A$ interpolates the entire $\rC^*$-algebra $B$.  These are analogues of null sets in classical function theory, on which several profound results rely. This motivates the development of a noncommutative variant, which we use to find appropriate `quantized' versions of some of these classical facts. 
Through a delicate generalization of a theorem of Varopoulos, we show that, roughly speaking, sufficiently regular interpolation projections are null precisely when their atomic parts are. As an application, we give alternative proofs and sharpenings of some recent peak
interpolation results of Davidson and Hartz for algebras on Hilbert function spaces, also illuminating thereby how earlier noncommutative peak-interpolation theory may be applied.
 In another direction, given a convex subset of the state space of $B$, we characterize when the associated Riesz projection is null. This is then applied to various important topics in noncommutative function theory, such as the F.\& M. Riesz property, the existence of Lebesgue decompositions, the description of Henkin functionals, and Arveson's noncommutative 
 Hardy spaces (maximal subdiagonal algebras).  
\end{abstract} 
\maketitle

\section{Introduction}

Let $B$ be a unital $\rC^*$-algebra. A projection $u$ in the von Neumann algebra $B^{**}$ is \emph{open} if it is the increasing weak-$*$ limit of a net of positive contractions in $B$. Likewise, a projection $q\in B^{**}$ is closed if $1-q$ is open. These notions of non-commutative topology were introduced and developed by Akemann  \cite{akemann1969, akemann1970left}. 

Next, let $A\subset B$ be a unital norm-closed subalgebra. A closed projection $q\in B^{**}$ is said to be an \emph{$A$-peak} projection if there is a contraction $a\in A$ such that $aq=q$ and $\|ap\|<1$ for every closed projection $p\in B^{**}$ orthogonal to $q$. This is readily seen to be a generalization of the classical idea of a peak set for a function algebra \cite{gamelin1969}. The foundations of the theory of non-commutative peaking in this sense have been laid out in a series of papers by the first author and collaborators (see e.g.\ \cite{hay2007,BHN2008,BR2011,BN2012,BR2013,blecher2013}, the last of these being in part a survey).  We shall refer to this theory 
below as {\em noncommutative  peak-interpolation}. 
An important result proved in these papers is that, when $B$ is separable, a closed projection $q$ is $A$-peak precisely when $q$ lies in $A^{\perp\perp}$. Outside of the separable setting,   
closed projections in $A^{\perp\perp}$ are the infima of  $A$-peak projections.  These are called the {\em generalized $A$-peak projections} (or $p$-{\em projections}) and they have properties similar to the peak projections. 

Peak sets are a key tool in classical function theory.
  For example, when $B=\rC(X)$ is separable, it is known that the collection of $A$-peak points (i.e.\  the minimal $A$-peak projections in $B^{**}$) coincides with the minimal boundary of $A$ \cite{bishop1959, bishop1959bdry}. 
By analogy, in the general setting one may still hope to extract structural information about  $A$ by studying the closed projections in $A^{\perp\perp}$, and this was 
initiated in the work referred to above. In more recent years, this paradigm has been successfully exploited in the context of several foundational purely operator algebraic problems \cite{CTh2022,CTh2023,clouatre2023pure}.

The current paper is a contribution to this general program. Herein we focus on a subclass of the projections in $A^{\perp\perp}$ that assume the role that measurable sets satisfying $|\mu|(E)=0$ (for some natural set of measures $\mu$) play in classical function theory.
Several profound results  and key techniques  in classical function theory hinge on properties of such sets, and their relation to peak sets;   see for example  Chapters 9 and 10 of \cite{rudin2008}.   Our 
goal is to develop the noncommutative variant in order to obtain `quantized' analogues of these 
deep facts for use in more general algebras.

We say that a projection $q\in B^{**}$ is \emph{$A$-null} 
if $|\phi|(q)=0$ for every $\phi\in A^{\perp}$. Here and throughout, given a continuous linear function $\phi\in B^*$, we denote by $|\phi|\in B^*$ its modulus.
 It follows that $A$-null projections automatically lie in $A^{\perp\perp}$. Several characterizations of null projections are proved in Section \ref{S:null}.

When $q$ satisfies that 
$qA=qB$ as subspaces in $B^{**}$, we say that $q$ is an \emph{$A$-interpolation} projection. Note that this is a one-sided concept, in that it is not equivalent to the equality 
$Aq = Bq$ in general (as one may see even in the upper triangular $2 \times 2$ matrices). 
When $B$ is commutative, closely related notions of interpolation have long been of interest to function theorists, even for countable sets (see for instance \cite{AHMR2019interp} and the references therein). 

It will be helpful to highlight some of the relationship between $q$ being an $A$-null projection, and $q$ being a (generalized) peak projection as well as an $A$-interpolation projection.  We emphasize the following  result, essentially  proved in \cite{CT2023Henkin}. 

\bigskip 

\noindent {\bf Basic principle:} Let $B$ be a $\rC^*$-algebra 
 and let $A \subset B$ be a norm-closed subalgebra. If $q$ is a closed projection in $B^{**}$, 
 then the following statements are equivalent.
\begin{enumerate}[{\rm (i)}] 
\item $q$ is $A$-null.
\item $q \in A^{\perp\perp}$ and 
$qA = qB$.
\end{enumerate}

\begin{proof} 
(ii) $\Rightarrow$(i): This is proved in  \cite[Theorem 6.2]{CT2023Henkin} when $q$ is closed and $B$ unital, but the same proof works in general  (it is not necessary that $q$ be closed).

(i) $\Rightarrow$(ii): (This does not require that $A$ be a subalgebra.) That $q \in A^{\perp\perp}$ follows from the inequality 
 $|\phi(q)|^2 \leq |\phi|(q) \, |\phi|(1) $ if $\phi \in A^{\perp}$ (see Lemma \ref{L:stateineq} 
below). 
That 
$qA = qB$ is proved in the unital case in the first lines of the proof of \cite[Proposition 3.4]{hay2007}.  The nonunital case follows by the same proof 
 (which proof is given with more detail in  the proof that (ii) implies (iii) in \cite[Theorem 6.2]{CT2023Henkin}) 
once we know that 
$qA$ is closed.   

To see this, we simply need to adapt the proof  of \cite[Proposition 3.1]{hay2007} to the case where $X=A$ and $B$ is nonunital. Consider $B^1=B+\bC 1\subset B^{**}$. Following the argument in \cite{hay2007}, we find a decreasing net $(e_t)$ in $B^1$ converging to $q$ in the weak-$*$ topology of $B^{**}$. While $e_t$ may not belong to $B$, we nevertheless have 
$Be_t  \subset B$ for each index $t$, which is all that is needed in Hay's argument to conclude that $qA$ is isometric to the quotient of $A$ by some norm-closed subspace, and hence is closed.
\end{proof}

In fact, much more is true:  for an $A$-null closed projection $q$,  the peaking and interpolation phenomena can be arranged to occur simultaneously in a very strong sense, with 
many extra features -- this is called \emph{peak-interpolation}
(see e.g.\ Bishop \cite{bishop1962}).   We refer to 
e.g.\ \cite{blecher2013, blecher2022} for background on noncommutative peaking and interpolation, although the latter paper focuses on a stronger two-sided version of
 peak-interpolation.

In light of the rigidity that the peak-interpolation phenomenon imposes on $q$, one may guess that actually verifying that $q$ be $A$-null can be difficult in practice. One of our main results addresses this issue, drawing inspiration 
from  a classical theorem of Varopoulos, which we now recall. Let $X$ be a compact Hausdorff space and let $A\subset \rC(X)$ be a closed unital subalgebra. In \cite{varopoulos1971}, it is proved that in order for a closed subset $K\subset X$ to be $A$-null, it suffices that $K$ be an $A$-interpolation set consisting entirely of $A$-peak points. 

Generalizing Varopoulos' theorem for closed $A$-interpolation projections when $B$ is an arbitrary $\rC^*$-algebra will occupy us for the subsequent two sections of the paper. Section \ref{S:indist} deals with non-commutative topological matters that are required to properly frame our result. We seek to understand to which extent a projection in $B^{**}$ is determined by the universally measurable projections dominating it. This prompts us to introduce the notion of \emph{measurably indistinguishable} projections, which is crucial for our generalization. The key observation we make is Proposition \ref{P:induniv}, showing that universally measurable projections are measurably indistinguishable from their so-called \emph{atomic parts}.

Section \ref{S:ncVar} is concerned with our noncommutative version of Varopoulos' theorem. For this purpose, we need the following ``complete" refinement of the notion of interpolation projection.  
For $\kappa>0$, we say that a projection $q$ is a (two-sided) \emph{complete $(A,\kappa)$-interpolation projection} if, given any $n\geq 1$ and any matrix $[b_{ij}]\in \bM_n(B)$, there are matrices $[a_{ij}],[a'_{ij}]\in \bM_n(A)$ such that 
\[
 a_{ij}q=b_{ij}q ,\quad  q a'_{ij}=qb_{ij} \quad \text{ for every } 1\leq i,j\leq n
\]
and satisfying
\[
 \|[a_{ij}]\|\leq \kappa \|[b_{ij}]\|,\quad \|[a'_{ij}]\|\leq \kappa \|[b_{ij}]\|.
\]
When $B$ is a nuclear $\rC^*$-algebra, which is the case for the main applications we have in mind, we show in Proposition \ref{P:interpnuclear} that any closed central $A$-interpolation projection must in fact be a complete $(A,\kappa)$-interpolation projection for some $\kappa>0$.
Before we can state 
the next theorem we recall that a projection $p\in B^{**}$ is said to be \emph{$A$-invariant} if $ap=pap$ for each $a\in A$. 
One of the main 
technical results of this paper is Theorem \ref{T:ncVar}, which goes as follows.

\begin{theoremx}\label{T:A}
 Let $B$ be a unital $\rC^*$-algebra and let $A\subset B$ be a unital norm-closed subalgebra.  Let $q\in B^{**}$ be a projection with the following properties:
 \begin{enumerate}[{\rm(i)}]
 \item $q$ is closed, 
 \item there is $\kappa>0$ for which $q$ is a complete $(A,\kappa)$-interpolation projection,
 \item $q$ commutes with $A$, and
 \item the atomic part of $q$ lies in $A^{\perp\perp}$.
\end{enumerate}
Then, for every $\phi\in A^\perp$  and every open $A$-invariant projection $u\in B^{**}$ dominating $q$, we have
\[
 |\phi|(q)\leq  \kappa^3 \sqrt{|\phi|(1)                                                                                                                                                                                                                                                                                                                                             \, | \phi|(u-q)}.
\]
\end{theoremx}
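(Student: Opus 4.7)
The plan is to combine the polar decomposition of $\phi$ with a bidual extension of the complete interpolation hypothesis (ii), and then to apply Cauchy--Schwarz. First, I would write $\phi = w|\phi|$ as the polar decomposition of $\phi$ inside $B^{**}$, with $w \in B^{**}$ a partial isometry whose support equals $s(|\phi|) = w^*w$, so that $|\phi|(q) = |\phi|(q s(|\phi|)) = \phi(qw^*)$. A standard Kaplansky density argument lifts the $(A,\kappa)$-interpolation condition at $q$ to the bidual level, yielding $a \in A^{**}$ with $qa = qw^*$ and $\|a\| \leq \kappa$. Since hypothesis (iii) forces $q$ to commute with $A^{**}$, and since $\phi$ extends normally to $B^{**}$ and vanishes on $A^{**}$ by weak-$*$ density, we obtain $\phi(a)=0$ and hence
\[
|\phi|(q) \;=\; \phi(qw^*) \;=\; \phi(qa) \;=\; -\phi(a(1-q)).
\]

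Next, I would use the domination $q \leq u$ to split $1-q = (u-q) + (1-u)$ and treat the two pieces separately. The $A$-invariance of $u$, extended to $A^{**}$ by weak-$*$ continuity, gives $au = uau$ and hence $aua^* \in uB^{**}u$; in particular $aua^* \leq \kappa^2 u$. Combined with $a(1-q)a^* = aa^*(1-q) \leq \kappa^2(1-q)$ (using (iii) to commute $q$ past $a$ and $a^*$), a support argument --- employing that $u$ and $1-q$ commute, with meet $u-q$ --- yields $a(u-q)a^* \leq \kappa^2(u-q)$. The Cauchy--Schwarz inequality $|\phi(y)|^2 \leq |\phi|(1)\,|\phi|(yy^*)$ for the normal functional $\phi$ then gives
\[
|\phi(a(u-q))|^2 \;\leq\; |\phi|(1)\cdot |\phi|\!\left(a(u-q)a^*\right) \;\leq\; \kappa^2\,|\phi|(1)\,|\phi|(u-q),
\]
i.e.\ $|\phi(a(u-q))| \leq \kappa\sqrt{|\phi|(1)\,|\phi|(u-q)}$, which already exhibits the target shape with a single factor of $\kappa$.

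The hard part will be controlling the residual term $\phi(a(1-u)) = -\phi(au)$: $A$-invariance confines $au$ to $uB^{**}u$, but $au$ is not in $A^{**}$ in general, so $\phi$ need not vanish on it, and a naive Cauchy--Schwarz estimate would leave an uncontrolled factor $|\phi|(1-u)$. This is precisely where hypothesis (iv) and the framework of Section \ref{S:indist} are meant to enter: the atomic part of $q$ lies in $A^{\perp\perp}$ and, by Proposition \ref{P:induniv}, $q$ is measurably indistinguishable from that atomic part. My expectation is that this identification lets one transfer the estimation of $\phi(au)$ to a computation on the atomic representation, where a second bidual interpolation step performed on the $u$-corner reabsorbs $au$ into an element of $A^{**}$ at the cost of an extra factor of $\kappa^2$ --- one from the norm of the new interpolant and one from a second Cauchy--Schwarz of the same form as above. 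Combining with the previous paragraph produces the asserted factor $\kappa^3$. Pinning down exactly how (iv) and measurable indistinguishability close this gap --- since $A$-invariance of $u$ alone is plainly insufficient --- is the main technical difficulty.
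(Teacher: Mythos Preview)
Your opening moves---polar decomposition, a first interpolation step producing an element of norm at most $\kappa$, the splitting $1-q=(u-q)+(1-u)$, and the Cauchy--Schwarz estimate on the $(u-q)$ piece---are sound and indeed parallel the paper's proof (which performs them with $x\in A$ rather than $a\in A^{**}$). Your computation $a(u-q)a^*\le\kappa^2(u-q)$ is correct: since $q\le u$ commutes with $a,a^*$ and $au=uau$, one checks that $qa(u-q)=0$ and $(1-u)a(u-q)=0$, so $a(u-q)a^*$ is supported on $u-q$.

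The genuine gap is exactly where you say it is, and your proposed resolution does not work. There is no meaningful ``transfer to the atomic representation'': the functional $\phi$ is fixed and need not be atomic, so evaluating $\phi(a(1-u))$ cannot be pushed to $z_{\mathrm{at}}B^{**}$. Nor is there any ``interpolation on the $u$-corner'': $u$ is merely $A$-invariant, not an interpolation projection, so there is no mechanism to reabsorb $au$ into $A^{**}$. Notice also that your argument never invokes the \emph{matrix} level of hypothesis~(ii); this is a warning sign, since the word ``complete'' is doing real work.

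The paper's treatment of the $(1-u)$ piece is entirely different and much more constructive. Hypothesis~(iv) combined with the Basic Principle makes each minimal subprojection $p_\lambda\le q$ an $A$-null projection, so one can choose $a_\lambda\in A$ peaking at $p_\lambda$ with $\|(1-u)a_\lambda\|<\eps$ (noncommutative peak-interpolation under the open set $u$). Measurable indistinguishability (Proposition~\ref{P:induniv}) plus compactness (Lemma~\ref{L:compact}) then show that $q$ is covered by finitely many open projections $f_{\lambda_j}$ built from the $a_{\lambda_j}$. A partition of $q$ into orthogonal pieces $e_j$ and associated partial isometries yields row and column matrices $V,V^*$ with $V\Delta V^*\approx q$, where $\Delta=\mathrm{diag}(a_{\lambda_j})$. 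Now the \emph{complete} $(A,\kappa)$-interpolation hypothesis is used to replace $V,V^*$ (after approximating them by matrices over $B$ via Lemma~\ref{L:duality}) by a row $R$ and column $C$ over $A$ with $\|R\|,\|C\|\le\kappa$. The scalar $d=R\Delta C\in A$ then satisfies $\|d\|\le\kappa^2$ and---crucially, because $u$ is $A$-invariant so $(1-u)d=(1-u)R\,(1-u)\Delta\,C$---one gets $\|(1-u)d\|\le\kappa^2\eps$ in \emph{norm}. This norm smallness, not a second Cauchy--Schwarz, is what kills the $(1-u)$ term; the factor $\kappa^3$ arises as $\|x\|\cdot\|d\|\le\kappa\cdot\kappa^2$ in the surviving $(u-q)$ term.
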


We interpret Theorem \ref{T:A} as saying that for sufficiently 
`topologically regular' interpolation projections
(i.e.\ where such $u$ may approach $q$)
commuting with $A$, the property of being $A$-null is entirely captured by the atomic part.  In Proposition \ref{P:CXD}, we exhibit a non-trivial class 
of algebras where the atomic part of every projection lies in $A^{\perp\perp}$. In Corollary \ref{C:ncVarcomm}, we identify some natural sufficient conditions under which the conclusion of Theorem \ref{T:A} can be strengthened to $q$ being null. In particular, we show in Corollary \ref{C:Varopoulos} how to recover Varopoulos' original theorem using our results.

In recent years it has been of interest to produce peak-interpolation theorems in the context of  multiplier algebras of Hilbert function spaces; see for instance 
\cite{CD2016duality} and \cite{DH2023}. Despite there being  some obvious conceptual overlaps between these research directions and the 
earlier general noncommutative peak-interpolation theory, no explicit connection was hitherto drawn in the literature between these two topics. In particular, the 
extensive  earlier 
noncommutative peak-interpolation machinery was not 
  brought to bear in proving results in \cite{CD2016duality} and \cite{DH2023}.  
  In Section \ref{S:A(H)}, we illustrate how this can be done. The key observation making this possible is found in Theorem \ref{T:nullAH}, where we establish a
   bijection between null projections and certain small sets on the unit sphere.  As an application of Theorem \ref{T:A} and of the noncommutative peak-interpolation theory, we then give an alternative proof for several of the results found in \cite{DH2023}. In fact, we achieve a sharper conclusion, since we show that peak-interpolation can be implemented  while maintaining domination by a control function, in line with Bishop's original theorem \cite{bishop1962}. We call this \emph{Bishop peak-interpolation}.  
  
  We omit the precise definitions for now, and refer the reader to the relevant section for more detail. Let $\H$ be a Hilbert space of analytic functions on the open unit ball $\bB_d\subset \bC^d$, admitting  a reproducing kernel that is unitarily invariant and regular. There is a central projection $z\in \T(\H)^{**}$ and a natural $*$-isomorphism $\Theta: \T(\H)^{**}z\to  \rC(\bS_d)^{**}$.

\begin{theoremx} \label{T:B} 
For a closed projection $q \in \T(\H)^{**}$ the following are equivalent.
\begin{enumerate}[{\rm (i)}]
\item $q$ is an $\A(\H)$-peak projection.
\item $q$ is $\A(\H)$-null.
\item $q$ is dominated by $z$ and there is a closed $\M(\H)$-totally null subset $E\subset \bS_d$ such that $\Theta(q)=\chi_E$.
\item $q$ is dominated by $z$ and there is a closed Bishop peak-interpolation subset $E\subset \bS_d$ such that $\Theta(q)=\chi_E$.
\end{enumerate} 
If there exist non-empty $\M(\H)$-totally null sets, then these statements are further equivalent to the following.
\begin{enumerate}
\item[{\rm (v)}]  $q$ is dominated by $z$ and there is a closed $\A(\H)$-interpolation set $E\subset \bS_d$ such that $\Theta(q)=\chi_E$.
\end{enumerate}
 Moreover $\Theta$ restricts to an order preserving  bijection from the 
 projections $q$ as above 
  to the  $\M(\H)$-totally null  sets $E$ (equivalently, Bishop peak-interpolation sets or  $\A(\H)$-peak sets).
  \end{theoremx}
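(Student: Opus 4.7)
The plan is to organize the argument as a short cycle of implications, anchored by the two central tools already in place: the Basic principle (which identifies $\A(\H)$-null projections with closed projections in $\A(\H)^{\perp\perp}$ satisfying $qA=qB$) and Theorem~\ref{T:nullAH} (which, according to the introduction, provides the bijection between $\A(\H)$-null projections and closed $\M(\H)$-totally null subsets of $\bS_d$). Since $\Theta$ is only defined on $\T(\H)^{**}z$, the preliminary observation is that each of the projections appearing in (i)--(v) is automatically dominated by $z$: this follows from the structure of $\A(\H)$ relative to $\T(\H)$, namely that any functional associated with the peak, null, or interpolation behaviour of $\A(\H)$ is a boundary functional and hence supported on $z$.

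With this reduction in hand, (ii)$\Leftrightarrow$(iii) is exactly the content of Theorem~\ref{T:nullAH}. For (ii)$\Rightarrow$(i), one combines the Basic principle with the separability of $\T(\H)$ (inherited from $\H$) and the standard fact, recalled in the introduction, that closed projections in $A^{\perp\perp}$ are $A$-peak in the separable case. For (i)$\Rightarrow$(ii), a peak projection $q$ is closed and lies in $\A(\H)^{\perp\perp}$, so by the Basic principle it suffices to produce the interpolation identity; under $z$, where everything is commutative via $\Theta$, this reduces to the classical statement that peak sets for uniform algebras are null sets for annihilating measures. Next, (iii)$\Leftrightarrow$(iv) follows from Bishop's classical peak-interpolation theorem applied to $\Theta(\A(\H)z)\subset \rC(\bS_d)$, together with the standard fact that Bishop peak-interpolation sets are totally null.

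The implication (v)$\Rightarrow$(iii) under the extra hypothesis is where Theorem~\ref{T:A} does the heavy lifting. Given $q=\Theta^{-1}(\chi_E)\leq z$ with $E$ a closed $\A(\H)$-interpolation set, I would verify the four hypotheses of Theorem~\ref{T:A}: closedness is immediate; the complete $(A,\kappa)$-interpolation property comes from Proposition~\ref{P:interpnuclear}, as $\rC(\bS_d)$ is nuclear and $q$ is central under $z$; commutation of $q$ with $\A(\H)$ holds because $q\leq z$ and $\Theta$ takes $\A(\H)z$ into a commutative algebra; and the atomic part condition is supplied by Proposition~\ref{P:CXD}. Then, for $\phi\in \A(\H)^{\perp}$, Theorem~\ref{T:A} yields $|\phi|(q)\leq \kappa^{3}\sqrt{|\phi|(1)\,|\phi|(u-q)}$ for any open $\A(\H)$-invariant $u\geq q$; choosing such $u$ to approximate $q$ from above forces $|\phi|(q)=0$, so $q$ is null. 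The final bijection assertion is then immediate from Theorem~\ref{T:nullAH} upgraded by the equivalences already established.

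The main obstacle I anticipate is the production of sufficiently many open $\A(\H)$-invariant projections $u\geq q$ for which $|\phi|(u-q)$ is small, which is the only way to convert the inequality of Theorem~\ref{T:A} into genuine nullness in the step (v)$\Rightarrow$(iii). Securing this regularity in the Hilbert function space setting, and carefully checking the applicability of Proposition~\ref{P:CXD} to yield the atomic part condition, are the technical heart of the argument; the hypothesis that nonempty $\M(\H)$-totally null sets exist is likely there precisely to guarantee enough such approximating $u$'s (equivalently, enough peak projections above $q$) so that the inequality becomes usable.
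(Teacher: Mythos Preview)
Your overall architecture matches the paper's Corollary~\ref{C:DHequiv}, but several of the individual implications rest on incorrect or inapplicable tools.

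\textbf{The step (i)$\Rightarrow$(ii).} Your claim that any $\A(\H)$-peak projection is ``automatically dominated by $z$'' because the relevant functionals are ``boundary functionals'' is not a proof. In the paper this is the nontrivial direction (i)$\Rightarrow$(iii): one takes the peaking contraction $a\in\A(\H)$, observes that $a^n\to q$ weak-$*$ in $\T(\H)^{**}$, and then uses the identity theorem for analytic functions on $\bB_d$ to show that $\Omega(q(1-z_K))=0$, i.e.\ $q\leq z_K$. Only then does one show $\M(\H)$-total nullness directly, using that $|\phi|$ is $\M(\H)$-Henkin whenever $\phi$ is, together with $a^n\to 0$ weak-$*$ in $B(\H)$. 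Your proposed reduction ``under $z$'' to classical peak-set facts presupposes exactly what has to be proved.

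\textbf{The step (iii)$\Rightarrow$(iv).} Classical Bishop applied to the uniform algebra $\Theta(\A(\H)z_K)\subset\rC(\bS_d)$ would only produce a function $\phi$ in that uniform closure with $\|\phi\|_\infty$ controlled; it does \emph{not} yield $M_\phi\in\A(\H)$ with $\|M_\phi\|$ controlled, which is what Bishop peak-interpolation for $\A(\H)$ means. The paper obtains (iv) from Theorem~\ref{T:DHbish}, whose proof genuinely uses the noncommutative peak-interpolation machinery inside $\T(\H)^{**}$ (specifically \cite[Theorem~3.4]{blecher2013}).

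\textbf{The step (v)$\Rightarrow$(iii).} Two problems. First, Proposition~\ref{P:CXD} is about tensor products $\rC(X)\otimes_{\min}D$; $\T(\H)$ is an extension of $\rC(\bS_d)$ by the compacts, not such a tensor product, so \ref{P:CXD} does not supply the atomic-part condition. The paper instead uses \cite[Lemma~9.1]{DH2023}: each singleton $\{\zeta\}\subset\bS_d$ is $\M(\H)$-totally null, hence $p_\zeta=\Theta^{-1}(\chi_{\{\zeta\}})$ is $\A(\H)$-null and lies in $\A(\H)^{\perp\perp}$. This is precisely where the hypothesis on the existence of non-empty totally null sets enters---not, as you guessed, in manufacturing the approximating open projections. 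Second, the approximation by open $\A(\H)$-invariant $u\geq q$ is handled not by Theorem~\ref{T:A} alone but by its refinement Corollary~\ref{C:adapted}, which exploits that $1-z_K$ is $\A(\H)$-adapted (Lemma~\ref{L:adapted}); this is the mechanism that actually closes the gap you flagged as the ``main obstacle.''
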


Beyond this result,  our approach can be interpreted as 
a general method that we hope will be applicable in other contexts to 
 prove strong peak-interpolation results. 
 
In Section \ref{S:riesznull}, we turn to a different problem regarding $A$-null projections.
Let $\Delta$ be a norm-closed convex set of states on $B$.
We let $\AC(\Delta)\subset B^*$ denote the set of those functionals that are absolutely continuous 
(see Section \ref{S:riesznull} for the definition of this) with respect to some element of $\Delta$.  
It is known \cite[Theorem 3.5]{CT2023Henkin} that  there is a unique projection $r_\Delta\in B^{**}$ with the property that $\AC(\Delta)^\perp=(1-r_\Delta)B^{**}$. We call this projection the \emph{Riesz projection} corresponding to $\Delta$. We aim to characterize when this Riesz projection satisfies that $1-r_\Delta$ is $A$-null. 

More precisely, our main result relates this condition to other natural properties inspired by classical results in function theory. We say that $A$ has the \emph{F.\&M. Riesz property} in $B$ with respect to $\Delta$ if $\phi( r_\Delta\cdot )\in A^\perp$ for every $\phi\in A^\perp$. This property has been a recurring theme (under various guises) in the operator algebras literature; see for instance \cite{BL2007,CH2025} and the references therein.  

In addition, we say that $A$ has the \emph{Forelli property} in $B$ with respect to $\Delta$ if for every projection $q\in \AC(\Delta)^\perp$  that is the supremum of a collection of closed projections,  there is a contractive net $(a_i)$ in $A$ such that $(q a_i )$ converges to $0$ in the weak-$*$ topology of $B^{**}$, and $(\phi(1-a_i))$ converges to $0$ for every $\phi\in \AC(\Delta)$.  This technical approximation property is directly inspired by \cite[Lemma 9.5.5]{rudin2008}. Our 
next main result can be streamlined as follows.

\begin{theoremx}\label{T:C}
Assume that $\Delta$ is closed in the weak-$*$ topology of $B^{**}$. Consider the following statements.
\begin{enumerate}[{\rm (i)}]
\item The projection $1-r_\Delta$ is $A$-null.
\item All closed projections in $\AC(\Delta)^\perp$ are dominated by a projection in $\AC(\Delta)^\perp\cap A^{\perp\perp}$.
\item $A$ has the Forelli property in $B$ with respect to $\Delta$.
\item $A$ has the F.\&M. Riesz property in $B$ with respect to $\Delta$.
\end{enumerate}
Then,
\[
{\rm (i)}\Rightarrow {\rm (ii)}\Leftrightarrow {\rm (iii)}\Leftrightarrow {\rm (iv)}.
\]
\end{theoremx}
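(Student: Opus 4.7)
The plan is to route every implication through the single statement that $1-r_\Delta$ lies in $A^{\perp\perp}$, and to show that this statement is equivalent to each of (ii), (iii), (iv) while being trivially implied by (i). Indeed, (i) $\Rightarrow$ (ii) is immediate: an $A$-null projection automatically lies in $A^{\perp\perp}$, so $1-r_\Delta$ itself is a projection in $\AC(\Delta)^\perp \cap A^{\perp\perp}$ that dominates every closed projection in $\AC(\Delta)^\perp$.

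The topological linchpin, and the step I expect to be the main obstacle, is to show that the weak-$*$ closure of $\Delta$ forces $1-r_\Delta$ itself to be a closed projection in $B^{**}$. I would attempt this by revisiting the construction of $r_\Delta$ from \cite{CT2023Henkin}, where $r_\Delta$ is naturally realized as a supremum of support projections attached to the states in $\Delta$ and should therefore inherit openness from this description. Once this closedness is granted, the equivalence (ii) $\Leftrightarrow (1-r_\Delta \in A^{\perp\perp})$ becomes formal: applying (ii) to the closed projection $1-r_\Delta$ itself produces some $p \in A^{\perp\perp} \cap \AC(\Delta)^\perp$ with $1-r_\Delta \leq p \leq 1-r_\Delta$, forcing $p = 1-r_\Delta \in A^{\perp\perp}$. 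The equivalence (iv) $\Leftrightarrow (1-r_\Delta \in A^{\perp\perp})$ then follows from the fact that $A \cdot A \subset A$ makes $A^{\perp\perp}$ a weak-$*$ closed subalgebra of $B^{**}$: if $1-r_\Delta \in A^{\perp\perp}$, then $(1-r_\Delta)a \in A^{\perp\perp}$ for every $a \in A$, so $\phi((1-r_\Delta)a) = 0$ for every $\phi \in A^\perp$; conversely, evaluating $\phi((1-r_\Delta)\,\cdot\,) \in A^\perp$ at the unit $1 \in A$ yields $\phi(1-r_\Delta) = 0$.

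For (iii), I would handle the two directions separately, again by linking to $1-r_\Delta \in A^{\perp\perp}$. To obtain (iii), I would invoke Goldstine's theorem: since $r_\Delta$ is a contraction in $A^{\perp\perp}$, there exists a contractive net $(a_i) \subset A$ converging to $r_\Delta$ in the weak-$*$ topology of $B^{**}$. For any projection $q \in \AC(\Delta)^\perp$, separate weak-$*$ continuity of multiplication in the von Neumann algebra $B^{**}$ gives $q a_i \to q r_\Delta = 0$, while $\phi(1-a_i) \to \phi(1-r_\Delta) = 0$ for each $\phi \in \AC(\Delta)$, using the characterization $\AC(\Delta) = \{\phi \in B^* : \phi = \phi(r_\Delta\,\cdot\,)\}$ obtained from $\AC(\Delta)^\perp = (1-r_\Delta)B^{**}$. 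Conversely, to recover $1-r_\Delta \in A^{\perp\perp}$ from (iii), I would apply the Forelli property to the closed projection $q = 1-r_\Delta$ and extract a weak-$*$ cluster point $a_\infty \in A^{\perp\perp}$ of the resulting contractive net in $A$. The two approximation conditions then read $(1-r_\Delta)a_\infty = 0$ and $a_\infty - 1 \in \AC(\Delta)^\perp = (1-r_\Delta)B^{**}$, which together force $a_\infty = r_\Delta$. Thus $r_\Delta$, and hence $1-r_\Delta$, lies in $A^{\perp\perp}$, closing the cycle (ii) $\Leftrightarrow$ (iii) $\Leftrightarrow$ (iv).
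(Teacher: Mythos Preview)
Your plan to route everything through the auxiliary statement ``$1-r_\Delta\in A^{\perp\perp}$'' is exactly what the paper does (this is its statement (xi) in Theorem~\ref{T:ForelliFM}), and your proofs of (i)$\Rightarrow$(ii), of (iv)$\Leftrightarrow$(xi), and of (xi)$\Rightarrow$(iii) via Goldstine are all correct and essentially the paper's arguments.

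The gap is your topological claim that $1-r_\Delta$ is \emph{closed} (equivalently that $r_\Delta$ is open). This is false in general: take $B=\rC(X)$ and $\Delta=\{\delta_x\}$ for a non-isolated point $x$, so that $r_\Delta=\chi_{\{x\}}$ is closed, not open, and $1-r_\Delta$ is open, not closed. Support projections of states are not open, and neither is their supremum; the construction of $r_\Delta$ in \cite{CT2023Henkin} gives no such information. This breaks both of your reverse implications: you cannot feed $1-r_\Delta$ into hypothesis (ii), which only speaks of closed projections, nor into hypothesis (iii), which requires a type $F$ projection.

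The paper's route around this obstacle does not attempt to pin down the topological type of $1-r_\Delta$ itself. Instead it invokes Lemma~\ref{L:Rainwater} (a noncommutative Rainwater lemma, which is where weak-$*$ closedness of $\Delta$ is actually used): for each \emph{individual} singular state $\psi$ one produces a type $F_\sigma$ projection $q\in\AC(\Delta)^\perp$ with $\psi(q)=1$. The implication (Forelli)$\Rightarrow$(F.\&M. Riesz) is then carried out functional by functional: given $\phi\in A^\perp$ with singular part $\psi$, one approximates $q$ by a type $F_0$ subprojection, applies the Forelli net there, and shows directly that the absolutely continuous part annihilates each fixed $a_0\in A$. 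The cycle (ii)$\Rightarrow$(iii)$\Rightarrow$(iv)$\Rightarrow$(ii) then closes without ever asserting that $1-r_\Delta$ is closed. (One can in fact deduce from Rainwater's lemma that $1-r_\Delta$ is of type $F$, which would salvage your (iii)$\Rightarrow$(xi); but (ii)$\Rightarrow$(xi) would still require decomposing $1-r_\Delta$ into closed pieces and taking a supremum of the dominating projections in $A^{\perp\perp}$, which is precisely the paper's argument for (vi)$\Rightarrow$(vii).)
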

 
The condition that $\Delta$ be weak-$*$ closed is used in proving Lemma \ref{L:Rainwater}, a non-commutative analogue of a classical lemma due to Rainwater \cite{rainwater1969}. Without it, the topological nature of the Riesz projection is not as clear. Nevertheless, many of the implications above remain valid if $\Delta$ is merely assumed to be norm-closed (see Theorem \ref{T:ForelliFM}). We apply these findings in Section \ref{S:Leb} to the existence of Lebesgue decompositions for operator algebras, which is a main theme in the recent paper 
of the second author and Hartz \cite{CH2025}. 
In Corollary \ref{C:Lebdecomp}, we give a new sufficient condition for such decompositions to exist.

In the last two sections of the paper, we aim to understand when all the statements in Theorem 
\ref{T:C} are in fact equivalent. Our first approach, used in Section \ref{S:FM}, consists of comparing the F.\& M. Riesz property defined above with a stronger variant of it. The key result in this direction is Corollary \ref{C:FMs}, showing that the so-called \emph{strong} F.\& M. Riesz property can be reformulated in terms of a band property. 
This is in fact a consequence of a certain rigidity phenomenon (Theorem \ref{T:colerange}) resulting from the F.\& M. Riesz property, 
and which is related to the classical Cole-Range theorem \cite[Theorem 9.6.1]{rudin2008}.
This in turn  is  shown in Examples \ref{E:Henkin} and \ref{E:AB} to unify some previously known results on the structure of Henkin functionals \cite{CD2016duality,CH2025,CT2023Henkin}.
The property of the set of Henkin functionals forming a band plays an important role in these sections.  For example our analysis shows that much of 
the function theory developed  in Chapter 9 of \cite{rudin2008}, and in particular the Cole-Range theorem, rests on two main  pillars: that 
the Henkin functionals form a
band, and the F.\& M. Riesz property.  Also,  
our work  on the (strong) F.\& M.  Riesz property may be viewed as generalizations of Bishop's insight \cite{bishop1962} that a (strong) F.\& M.  Riesz theorem implies a generalized Rudin-Carleson theorem (the latter corresponds for us  to the appropriate projection(s) being $A$-null).  

Finally, in Section \ref{S:AL}, we tackle the equivalence question for Theorem 
\ref{T:C} using a different route, centered around the following notion. We say that $A$ has the \emph{Amar--Lederer property in $B$ with respect to 
$\Delta$} if all closed projections in $\AC(\Delta)^\perp$ are dominated by a closed projection in $\AC(\Delta)^\perp\cap A^{\perp\perp}$.  A very  important 
 classical result of Amar--Lederer \cite{AL1971} shows that $H^\infty(\bD)$ satisfies this property inside $L^\infty(\bT)$ with respect to the state of integration against arclength measure: speaking slightly loosely they show that any  Lebesgue null Borel set is contained in a (closed) Lebesgue null peak set. 
On the other hand, the statements in Theorem 
\ref{T:C}
 fail to be equivalent in this case (Example \ref{E:AL}). The rest of the section is devoted to proving Theorem \ref{T:ALstate}, where we contribute to the theory of maximal subdiagonal algebras. Indeed, we refine known results from \cite{ueda2009,BL2018} using some new noncommutative measure theory, 
 and show that such algebras do satisfy the full Amar and Lederer property for all null closed (indeed `regular') projections, and hence they also have the Forelli property.   
 We say `full' because of a severe restriction imposed  in the latter papers on the null projections used.

We close this introduction by mentioning some notation and background facts.  An operator algebra is a norm closed algebra of operators on a Hilbert space, 
or equivalently a closed subalgebra of a $\rC^*$-algebra. We refer to 
 \cite{BLM2004} for their theory.  We will use silently the fact from basic functional analysis that
$X^{\perp \perp}$ is the weak* closure in $Y^{**}$ of
a subspace  $X \subset
Y$, and is isometric to $X^{**}$.   For us a {\em projection}
is always an orthogonal projection.   The bidual of an operator algebra $A$ is an operator algebra, indeed is a von Neumann algebra
if $A$ is selfadjoint. 
We write $\chi_E$ for
the characteristic function of a set $E$.   In the 
case that $B = \rC(K)$, and $E$ is an open or  closed set
 in $K$, the projection $q = \chi_E$  may be viewed as an element of $\rC(K)^{**}$ in a natural way since 
$\rC(K)^*$ is a certain space of measures on $K$.     Thus if $B = \rC(K)$
the open or closed projections are precisely the   characteristic functions of  open or  closed sets.
Open projections arise naturally in functional analysis. For example, they come naturally out of the ‘spectral theorem/functional calculus': the spectral projections in
$B^{**}$ of a self-adjoint operator $a \in B$ corresponding to open sets in the spectrum of $T$ are open projections. 
 The nullspace of a normal state on a von Neumann algebra is closed under `joins' and `meets' of projections.

\section{Null projections}\label{S:null}

Let $B$ be a $\rC^*$-algebra and let $M\subset B$ be a subspace. Recall that a projection $p\in B^{**}$ is \emph{$M$-null} if $|\phi|(p)=0$ for every $\phi\in B^*$ annihilating $M$.  
In this section, we collect some general results on null projections. 
Before proceeding, we record an elementary inequality that will be used numerous times throughout the paper.

\begin{lemma}\label{L:stateineq}
Let $B$ be a $\rC^*$-algebra and let $\phi\in B^*$. Let $p\in B^{**}$ be a projection. Then, for every $x\in B^{**}$ we have
$
|\phi(px)|^2\leq \|\phi\| \|x\|^2|\phi|(p).
$
\end{lemma}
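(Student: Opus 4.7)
The plan is to prove this via the polar decomposition of $\phi$ combined with the Cauchy--Schwarz inequality for the positive functional $|\phi|$. Recall that polar decomposition in $B^*$ yields a partial isometry $v\in B^{**}$ with $\phi(a)=|\phi|(av)$ for every $a\in B^{**}$, together with the identity $|\phi|(1)=\|\phi\|$ and the norm bound $\|v\|\leq 1$. This is the standard tool for converting estimates about $\phi$ into estimates about the positive functional $|\phi|$, where Cauchy--Schwarz is available.

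The first step is to rewrite the left-hand side as
\[
\phi(px)=|\phi|(pxv),
\]
and then apply the Cauchy--Schwarz inequality for the positive linear functional $|\phi|$ to the factorization $pxv=p^*\cdot(xv)$. Since $p$ is a projection, $p^*p=p$, and this yields
\[
|\phi(px)|^2=\bigl||\phi|(p^*\cdot xv)\bigr|^2\leq |\phi|(p^*p)\,|\phi|\bigl((xv)^*(xv)\bigr)=|\phi|(p)\,|\phi|(v^*x^*xv).
\]
The second step is to absorb the factor involving $x$ and $v$ into the constants. Because $v^*x^*xv\leq \|x\|^2 v^*v\leq \|x\|^2\cdot 1$, positivity of $|\phi|$ gives $|\phi|(v^*x^*xv)\leq \|x\|^2\,|\phi|(1)=\|x\|^2\|\phi\|$. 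Substituting this into the previous estimate yields the desired inequality $|\phi(px)|^2\leq \|\phi\|\,\|x\|^2\,|\phi|(p)$.

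There is no real obstacle here: the only ingredients are the Sakai polar decomposition for functionals on a $\rC^*$-algebra (which produces the partial isometry $v\in B^{**}$ and identifies $\||\phi|\|$ with $\|\phi\|$), and the Cauchy--Schwarz inequality for positive linear functionals on a $*$-algebra, applied in $B^{**}$. Both are textbook facts and can be invoked without commentary.
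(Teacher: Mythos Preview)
Your proof is correct and follows essentially the same approach as the paper: write $\phi=|\phi|(\cdot\,v)$ via polar decomposition and apply the Cauchy--Schwarz inequality for the positive functional $|\phi|$. The only cosmetic difference is that the paper applies Cauchy--Schwarz in the form $||\phi|(pxv)|^2\leq |\phi|(1)\,|\phi|\bigl((pxv)(pxv)^*\bigr)$ and then uses $pxvv^*x^*p\leq \|x\|^2 p$, whereas you split the product as $p\cdot(xv)$; both routes yield the same bound.
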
 
\begin{proof} 
Choose a partial isometry $v\in B^{**}$ such that $\phi=|\phi|(\cdot v)$. Thus, by the Schwarz inequality we obtain  that 
\[
|\phi(px)|^2=||\phi|(pxv)|^2\leq \|\phi\||\phi|(pxv(pxv)^*)\leq  \|\phi\| 
\, \|x\|^2 \, |\phi|(p) 
\] 
for $x\in B^{**}$. \end{proof}

 Recall that a closed projection $p\in B^{**}$ is \emph{compact} if there is a contraction $b\in B$ such that $p=pb$. When $B$ is unital, this is clearly the same notion as that of being closed. 
We note that if $B$ is a nonunital $\rC^*$-algebra and  $A \subset B$ be a norm-closed subalgebra, then the 
generalized $A$-peak projections are the compact projections that happen to lie in $A^{\perp \perp}$.  See \cite{BR2013,BN2012} for this generalization of Glicksberg's theorem.
Thus in this case our ``Basic principle" stated in the introduction informs us that compact $A$-null projections are exactly the 
generalized $A$-peak projections which are also interpolation projections. 

The next development hinges on the following basic  fact.

\begin{lemma}\label{L:closeddense}
Let $B$ be a $\rC^*$-algebra. Then, the compact projections in $B^{**}$ span a weak-$*$ dense subspace.
\end{lemma}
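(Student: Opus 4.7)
The plan is to show that every element of $B$ is already a \emph{norm} limit of complex linear combinations of compact projections in $B^{**}$; since $B$ is weak-$*$ dense in $B^{**}$, this immediately gives the conclusion. Decomposing $x \in B$ as $\re(x) + i\, \operatorname{Im}(x)$ and splitting each self-adjoint part as a difference of two positive elements reduces the problem to the case of a single positive $a \in B^{+}$.

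Fix such an $a$ and, for each $c > 0$, let $p_c := \chi_{[c, \|a\|]}(a) \in B^{**}$ denote the corresponding Borel spectral projection. The key claim is that each $p_c$ is compact. First, the set $[c, \|a\|] \cap \spec(a)$ is closed in $\spec(a)$, which makes $p_c$ a closed projection in $B^{**}$. Second, the function $f_c(t) := \min(t/c, 1)$ lies in $C([0, \|a\|])$, satisfies $f_c(0) = 0$, and equals $1$ on $[c, \|a\|]$, so $f_c(a) \in C^*(a) \subset B$ is a contraction with $p_c f_c(a) = p_c$ by the functional calculus; this is exactly the definition of compactness of $p_c$.

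To finish, given $\eps > 0$, pick an integer $N \geq \|a\|/\eps$. The pointwise identity
\[
\sum_{k=1}^{N} \eps\, \chi_{[k\eps, \|a\|]}(t) \; = \; \eps \, \lfloor t/\eps \rfloor \qquad (0 \le t \le \|a\|)
\]
together with the spectral theorem in $B^{**}$ gives
\[
\Big\| a - \sum_{k=1}^{N} \eps\, p_{k\eps} \Big\| \;\leq\; \eps,
\]
exhibiting $a$ as a norm limit of finite linear combinations of the compact projections $p_{k\eps}$, which is what was needed.

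The only subtle point is to guarantee compactness (not merely closedness) of the spectral projections $p_c$ when $B$ is nonunital: this is precisely what forces the choice of a witness $f_c$ vanishing at $0$, so that $f_c(a)$ actually belongs to $B$ rather than to its unitization. Past that, the argument is a routine application of the Borel functional calculus, and no serious obstacle is anticipated.
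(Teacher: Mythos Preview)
Your proof is correct. Both your argument and the paper's hinge on the same observation: for a positive $a \in B$ and a compact subset $K$ of $\spec(a)$ bounded away from $0$, the spectral projection $\chi_K(a)$ is a compact projection in $B^{**}$ (closed, with a contractive witness in $C^*(a) \subset B$ obtained from a function vanishing at $0$). From there the two proofs diverge in presentation. The paper argues by duality: given $\phi \in B^*$ annihilating all compact projections, it represents $\phi$ on $C^*(b)$ by a regular Borel measure $\mu$ on $\spec(b)\setminus\{0\}$, observes $\mu(K) = \phi(\chi_K(b)) = 0$ for all such compact $K$, and concludes $\mu = 0$ by regularity. You instead give an explicit Riemann-sum approximation $a \approx \sum_k \eps\, p_{k\eps}$, which shows the stronger fact that $B$ lies in the \emph{norm} closure of the span of compact projections. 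Your route is more elementary (no measure representation, no dominated convergence or regularity needed) and yields a marginally sharper conclusion; the paper's route is perhaps more in keeping with the annihilator-based perspective used elsewhere in the paper.
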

\begin{proof}
Let $D\subset B^{**}$ denote the subspace generated by all compact projections in $B^{**}$. Let $\phi \in B^{*}$ be a functional annihilating $D$. We claim that $\phi=0$. To see this, it suffices to fix a positive element $b\in B$ and to show that $\phi(b)=0$. Let $\Omega\subset \bR$ denote the spectrum of $b$.
We recall that 
$\rC_0(\Omega\setminus\{0\})$ is $*$-isomorphic to the $\rC^*$-algebra generated by $b$. Then there exists a regular Borel measure $\mu$ on $\Omega \setminus \{ 0 \}$ such that
\begin{equation}\label{E:specmeasure}
\phi(f(b))=\int_\Omega f d\mu 
\end{equation}
for every $f\in \rC_0(\Omega)$ with $f(0)=0$.
If $K\subset \Omega \setminus \{ 0 \}$ is compact then $\chi_K(b) \in B^{**}$ by functional calculus, and $\chi_K(b)$ is closed with respect to the unitization of $B$, hence it
is a compact projection in $B^{**}$\cite[Lemma 2.4]{AAP1989}.  Hence, $\chi_K(b)\in D$ and $\phi(\chi_K(b))=0$.
On the other hand, by a dominated convergence  argument applied to a sequence of continuous functions decreasing to $\chi_K$ and vanishing at $0$, we find by virtue of \eqref{E:specmeasure} that $\phi(\chi_K(b))=\mu(K)$ so that $\mu(K)=0$. By regularity of the measure $\mu$, we conclude that $\mu=0$, and thus $\phi=0$.
\end{proof}

As motivation for what is to come, we give a simple criterion for the agreement between a $\rC^*$-algebra and one of its subspaces. For subalgebras of commutative $\rC^*$-algebras, one may use such 
considerations to  recover classical characterizations of $\rC(K)$ among its subalgebras, such as one due to Bade--Curtis \cite[Theorem B]{BC1966}.

\begin{proposition}\label{P:sacriterion}
Let $B$ be a $\rC^*$-algebra and let $M\subset B$ be a norm-closed subspace. Then, the following statements are equivalent.
\begin{enumerate}[{\rm (i)}]
\item $M=B$.
\item The unit  of $B^{**}$ is $M$-null.
\item Any compact projection in $B^{**}$ lies in $M^{\perp\perp}$.
\end{enumerate}
\end{proposition}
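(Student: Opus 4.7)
The plan is to establish the equivalences via the cycle (i) $\Rightarrow$ (ii) $\Rightarrow$ (i) together with (i) $\Rightarrow$ (iii) $\Rightarrow$ (i). Both implications starting from (i) are immediate from the definitions: if $M=B$ then $M^{\perp}=\{0\}$ and $M^{\perp\perp}=B^{**}$, so the unit is trivially $M$-null and every (compact) projection in $B^{**}$ lies in $M^{\perp\perp}$.

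For (ii) $\Rightarrow$ (i), I would invoke the elementary fact that for every $\phi\in B^{*}$ the modulus $|\phi|$ is a positive functional on $B^{**}$ with $|\phi|(1)=\||\phi|\|=\|\phi\|$. Hence $M$-nullity of the unit of $B^{**}$ forces $\|\phi\|=0$ for each $\phi\in M^{\perp}$, so that $M^{\perp}=\{0\}$, and Hahn--Banach yields $M=B$.

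For (iii) $\Rightarrow$ (i), the plan is to promote the assumption on compact projections to a statement about all of $B^{**}$ by weak-$*$ density. Since $M^{\perp\perp}$ is weak-$*$ closed in $B^{**}$, assumption (iii) combined with Lemma \ref{L:closeddense} gives $M^{\perp\perp}=B^{**}$, and thus $M=B$.

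There is no substantial obstacle here: the nontrivial content lies entirely in Lemma \ref{L:closeddense}, which already supplies the density of the span of compact projections. The role of the proposition is simply to package this density together with the identity $|\phi|(1)=\|\phi\|$ into a tidy criterion for the agreement $M=B$.
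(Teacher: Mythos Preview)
Your proof is correct and uses essentially the same ingredients as the paper: Lemma \ref{L:closeddense} for (iii) $\Rightarrow$ (i), and elementary properties of $|\phi|$ elsewhere. The only difference is organizational---the paper argues via the cycle (i) $\Rightarrow$ (ii) $\Rightarrow$ (iii) $\Rightarrow$ (i), using that $p\leq 1$ implies $|\phi|(p)\leq |\phi|(1)=0$ for (ii) $\Rightarrow$ (iii), whereas you split into two separate equivalences and go directly from (ii) to (i) via $|\phi|(1)=\|\phi\|$; both routes are equally short.
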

\begin{proof}
(i)$\Rightarrow$(ii): By assumption, we see that $M^{\perp}=\{0\}$, so that, vacuously, the unit of $B^{**}$ is $M$-null.

(ii) $\Rightarrow$(iii): Let $\phi\in M^\perp$ and $p\in B^{**}$ be a 
compact projection. Let $e\in B^{**}$ denote the unit. Then, $p\leq e$ so that $|\phi|(p)\leq |\phi|(e)=0$. This shows that $p$ is $M$-null, and in particular $p\in M^{\perp\perp}$ by Lemma \ref{L:stateineq}.

(iii)$\Rightarrow$(i): The assumption combined with Lemma \ref{L:closeddense} implies that $M^{\perp\perp}=B^{**}$. In turn, by the Hahn--Banach theorem, we find $M=B$ since $M$ is norm-closed.
\end{proof}

A natural question that arises from Proposition \ref{P:sacriterion} is whether there is a similar 
characterization of nullity for proper projections, as opposed to  the unit of $B^{**}$ in that result. 
As was communicated to us by Alexander Izzo, this is indeed the case for subalgebras of commutative $\rC^*$-algebras, at least for Borel projections -- this conversation with Izzo inspired several results in this section; see also the remark at the end of Section 6.

More precisely, let $X$ be a compact Hausdorff space and let $A\subset \rC(X)$ be a unital norm-closed subalgebra. Let $E\subset X$ be a Borel measurable subset, and denote by $\chi_E\in \rC(X)^{**}$ its characteristic function. Then, $\chi_E$ is $A$-null if and only if $\chi_K\in A^{\perp\perp}$ for every closed subset $K\subset E$. 
Our next goal is to establish such a characterization  in the non-commutative context.

\begin{proposition}\label{P:nullproj} 
Let $B$ be a $\rC^*$-algebra and let $M\subset B$ be a norm-closed subspace. Let $p\in B^{**}$ be a projection. Then, the following statements are equivalent. 
\begin{enumerate}[{\rm (i)}] 
\item $pB^{**}\subset M^{\perp\perp}$. 

\item For every compact projection $q\in B^{**}$, the element $pq$ lies in $M^{\perp\perp}$. 

\item The projection $p$ is $M$-null. 
\end{enumerate} 
\end{proposition}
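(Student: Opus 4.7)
The plan is to prove the two nontrivial equivalences (i)$\Leftrightarrow$(ii) and (i)$\Leftrightarrow$(iii) separately, using only the tools already at hand: Lemma~\ref{L:closeddense}, which states that compact projections weak-$*$ span $B^{**}$, and Lemma~\ref{L:stateineq}, which gives a Schwarz-type control of $|\phi(p\,\cdot)|$ by $|\phi|(p)$. The polar decomposition of a functional in $B^*$ inside the enveloping von Neumann algebra $B^{**}$ will do the remaining work.

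The implication (i)$\Rightarrow$(ii) is formal. For (ii)$\Rightarrow$(i), the key observation is that left multiplication by $p$ is weak-$*$ continuous on $B^{**}$ while $M^{\perp\perp}$ is weak-$*$ closed; combined with Lemma~\ref{L:closeddense}, this pushes the hypothesis that $pq\in M^{\perp\perp}$ for every compact projection $q$ through to all of $pB^{**}$. Similarly, (iii)$\Rightarrow$(i) is immediate from Lemma~\ref{L:stateineq}: for $\phi\in M^\perp$ and $x\in B^{**}$, the inequality $|\phi(px)|^2\le \|\phi\|\,\|x\|^2\,|\phi|(p)=0$ forces $\phi$ to vanish on $pB^{**}$, whence $pB^{**}\subset M^{\perp\perp}$ by the bipolar theorem.

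The heart of the proof is the reverse implication (i)$\Rightarrow$(iii). Given $\phi\in M^\perp$, I would invoke the polar decomposition $\phi=|\phi|(\cdot\,v)$ inside $B^{**}$, using the same convention as in Lemma~\ref{L:stateineq}, so that $v$ is a partial isometry whose initial projection $v^*v$ is the support of the positive functional $|\phi|$. The idea is to recover $|\phi|(p)$ as the value of $\phi$ on an element of $pB^{**}$: since $1-v^*v$ lies in the kernel ideal of $|\phi|$, we obtain
\[
|\phi|(p)=|\phi|(p v^* v)=\phi(p v^*).
\]
Now $pv^*\in pB^{**}\subset M^{\perp\perp}$ by hypothesis, and since $\phi$ is weak-$*$ continuous on $B^{**}$ and vanishes on $M$, it also vanishes on the weak-$*$ closure $M^{\perp\perp}$; hence $\phi(pv^*)=0$, giving $|\phi|(p)=0$.

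The main (modest) obstacle I anticipate is pinning down the support convention for the polar decomposition carefully enough that the identity $|\phi|(p)=|\phi|(pv^*v)$ holds as stated; this is standard von Neumann algebra theory, and is in fact the same convention already tacitly used in the proof of Lemma~\ref{L:stateineq}, so no genuinely new verification should be needed.
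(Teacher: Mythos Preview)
Your proof is correct and follows essentially the same approach as the paper. The paper organizes the argument as a cycle (i)$\Rightarrow$(ii)$\Rightarrow$(iii)$\Rightarrow$(i) rather than your two equivalences, but the substance is identical: Lemma~\ref{L:closeddense} for weak-$*$ density of compact projections, Lemma~\ref{L:stateineq} for (iii)$\Rightarrow$(i), and the polar decomposition to pass from $\phi(p\,\cdot)=0$ to $|\phi|(p)=0$ --- the paper compresses this last step into the phrase ``In particular, $|\phi|(p)=0$'', which you spell out explicitly as $|\phi|(p)=\phi(pv^*)$.
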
 
\begin{proof} 
(i) $\Rightarrow$ (ii): This is trivial. 

(ii) $\Rightarrow$ (iii): Fix $\phi\in M^\perp$. By assumption,  $\phi(pq)=0$ for every compact projection $q\in B^{**}$. By Lemma \ref{L:closeddense}, it follows that $\phi(px)=0$ for every $x\in B^{**}$. In particular, $|\phi|(p)=0$.

(iii) $\Rightarrow$ (i): Let $\phi\in M^\perp$. 
Then $|\phi|(p)=0$, so that $\phi(px)=0$ for each $x\in B^{**}$ by Lemma \ref{L:stateineq}.  We conclude that $pB^{**}\subset M^\perp$. 
\end{proof}

When $B$ is unital and commutative and $p$ is closed, statement (ii) above is equivalent to the stipulation that every closed projection dominated by $p$ lies in $M^{\perp\perp}$. Next, we give an alternate version of Proposition \ref{P:nullproj} that is more aligned with this formulation. For this purpose, we require some ideas from \cite{blecher2022}.

 Let $B$ be a unital $\rC^*$-algebra and let $q\in B^{**}$ be a closed projection. Denote by $B_q\subset B$ the $\rC^*$-subalgebra of elements in $B$ commuting with $q$. We say that a projection $r\in B_q^{\perp\perp}$ is \emph{closed with respect to $B_q$} if there is a net $(b_i)$ of positive contractions in $B_q$ decreasing to $r$ in the weak-$*$ topology of $B^{**}$. 
 Equivalently, using the weak-$*$ homeomorphic $*$-isomorphism $B_q^{**}\cong B_q^{\perp\perp}$,  this means that $r$ is closed when viewed as an element of $B_q^{**}$.  It is easily verified that $q$ itself lies in $B_q^{\perp\perp}$, and it is closed with respect to $B_q$ (see for instance \cite[Lemma 2.1]{blecher2022}).
 Indeed a projection $r\in B_q^{\perp\perp}$ is closed with respect to $B_q$ if and only if it is closed with respect to $B$. 
 
Next, let $A\subset B$ be a norm-closed subalgebra and put $A_q=A\cap B_q$. Then, $A_q$ may be viewed as a subalgebra of both $B^{**}$ and $B_q^{**}$.  Via the isomorphism $B_q^{**}\cong B_q^{\perp\perp}$, the double annihilator of $A_q$ inside of $B_q^{**}$ and its double annihilator inside of $B^{**}$ are identified with one another.  In what follows, we will reserve the notation $A_q^{\perp\perp}$ for the double annihilator of $A_q$ inside of $B^{**}$.

\begin{proposition} \label{P:commutanttrick} 
Let $B$ be a unital $\rC^*$-algebra, and let $A\subset B$ be a unital norm-closed subalgebra.  
Let $q\in B^{**}$ be a closed projection lying in $A^{\perp\perp}$. 
 Then, the following statements are equivalent.
\begin{enumerate}[{\rm (i)}]

\item Every closed subprojection $p$ of $q$ in 
$B_q^{\perp\perp}$ is a generalized $A_q$-peak projection (that is,
$p\in A_q^{\perp\perp}$).

\item $q$ is $A_q$-null with respect to $B_q$ 
(that is, $|\varphi | (q) = 0$ for all 
 $\varphi \in (A_q)^\perp$). 

\item $q$ is $A_q$-interpolating with respect to $B_q$ 
(that is, $q A_q = q B_q$). 
\end{enumerate} 
\end{proposition}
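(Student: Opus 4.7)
The plan is to run the cycle (i) $\Rightarrow$ (ii) $\Rightarrow$ (iii) $\Rightarrow$ (i), making essential use of the fact that $q$ is central in $B_q^{\perp\perp}$: since $q$ commutes with every element of $B_q$ by construction, weak-$*$ continuity extends this commutation to all of $B_q^{\perp\perp}$. With this centrality in place, both Proposition \ref{P:nullproj} and the Basic Principle can be transferred cleanly to the pair $(B_q, A_q)$ treated on its own terms.

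For (i) $\Rightarrow$ (ii), I would apply Proposition \ref{P:nullproj} with the $\rC^*$-algebra $B_q$ and the subspace $A_q$. For any compact projection $r$ in $B_q^{**}$, centrality forces $qr$ to be the meet of two commuting closed projections, hence a closed subprojection of $q$ in $B_q^{\perp\perp}$. Hypothesis (i) then puts $qr \in A_q^{\perp\perp}$, and Proposition \ref{P:nullproj} converts this to the $A_q$-nullity of $q$. For (ii) $\Rightarrow$ (iii), the Basic Principle (direction (i) $\Rightarrow$ (ii)) applied to $(B_q, A_q)$ extracts both $q \in A_q^{\perp\perp}$ and the interpolation $qA_q = qB_q$ directly from $A_q$-nullity of the closed projection $q$.

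For (iii) $\Rightarrow$ (i), I would first establish $q \in A_q^{\perp\perp}$ and then invoke the Basic Principle in the reverse direction (ii) $\Rightarrow$ (i) for $(B_q, A_q)$ to recover the $A_q$-nullity of $q$. Once nullity of $q$ is in hand, for every closed subprojection $p \leq q$ in $B_q^{\perp\perp}$ and every $\phi \in A_q^\perp$ the monotonicity $|\phi|(p) \leq |\phi|(q) = 0$ makes $p$ itself $A_q$-null, and Lemma \ref{L:stateineq} promotes this to $p \in A_q^{\perp\perp}$, which is (i).

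The main obstacle I anticipate is establishing the intermediate membership $q \in A_q^{\perp\perp}$ from (iii) and the standing hypothesis $q \in A^{\perp\perp}$. The latter furnishes only a net $(a_\lambda) \subset A$ with $a_\lambda \to q$ weak-$*$, and these elements need not commute with $q$. My plan is to exploit (iii) via the open mapping theorem applied to the bounded linear bijection $A_q / (A_q \cap \ker(q\,\cdot\,)) \to qA_q = qB_q$: the resulting bounded lifting transfers a decreasing approximation of $q$ by positive contractions in $B_q$ (available because $q$ is closed in $B_q^{\perp\perp}$) into a bounded net in $A_q$ whose weak-$*$ cluster points $a \in A_q^{\perp\perp}$ satisfy $qa = q$. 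Upgrading this to the full membership $q \in A_q^{\perp\perp}$ is the genuinely delicate step, and I expect it to require combining the relation $qa = q$ with the peak-interpolation structure supplied by $q \in A^{\perp\perp}$.
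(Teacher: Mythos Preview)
Your cycle (i) $\Rightarrow$ (ii) $\Rightarrow$ (iii) $\Rightarrow$ (i) and the arguments for (i) $\Rightarrow$ (ii) and (ii) $\Rightarrow$ (iii) are essentially the paper's own: the paper also uses centrality of $q$ in $B_q^{\perp\perp}$ to see that $qp$ is a closed subprojection of $q$ for each closed $p$ in $B_q^{**}$, and then invokes Lemma~\ref{L:closeddense} (which is what underlies Proposition~\ref{P:nullproj}); and (ii) $\Leftrightarrow$ (iii) is the Basic Principle applied to $(B_q,A_q)$.

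The gap is exactly where you flag it: establishing $q \in A_q^{\perp\perp}$. Your proposed route via the open mapping theorem only produces some $a \in A_q^{\perp\perp}$ with $qa = q$, and you are right that this alone does not give $q \in A_q^{\perp\perp}$; nothing in your sketch closes that gap. The paper bypasses the issue entirely: $q \in A_q^{\perp\perp}$ is \emph{automatic} from the standing hypothesis that $q$ is a closed projection in $A^{\perp\perp}$, by \cite[Lemma~2.1]{blecher2022}, and does not require (iii) at all. With this fact in hand, the Basic Principle gives (ii) $\Leftrightarrow$ (iii) directly, and (ii) $\Rightarrow$ (i) is the monotonicity argument you already wrote down. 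So your difficulty dissolves once you know that the membership $q \in A_q^{\perp\perp}$ is a general structural fact rather than something to be extracted from the interpolation hypothesis.
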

\begin{proof}  By Lemma 2.1 in  \cite{blecher2022} we automatically have $q \in A_q^{\perp\perp}$.
Then the  equivalence of  (ii) and (iii)  follows from the
 ``Basic principle" stated in the introduction.  
 Note that by definition of $B_q$ the projection $q$ commutes with the elements here. 

(i) $\Rightarrow$(ii):  
Let $\phi:B_q \to \bC$ be a continuous linear functional annihilating $A_q$.  Let $v\in B_q^{**}$ be a partial isometry such that $|\phi |=\phi(\cdot v)$. The goal is to show that $\phi(qv)=0$.  By virtue of Lemma \ref{L:closeddense}, it suffices to show that $\phi(qp)=0$ for every closed projection $p\in B_q^{**}$. Any such projection $p$ necessarily commutes with $q$ in $B_q^{**}$, so that $qp$ is closed in $B_q^{**}$ \cite[Theorem II.7]{akemann1969} and is dominated by $q$. In turn, viewing $qp$ as a closed subprojection of $q$ in $B_q^{\perp\perp}$, the assumption gives $qp\in A_q^{\perp\perp}$, so that as a projection in $B_q^{**}$, $qp$ lies in the double annihilator of $A_q$. In particular, $\phi(qp)=0$ as desired. 

(ii)$\Rightarrow$(i): Let $p\in B^{**}$ be any projection lying in $B_q^{\perp\perp}$ and dominated by $q$. Then, $p$ may be viewed as a projection in $B_q^{**}$ dominated by $q$. Our assumption then implies that $|\phi|(p)=0$ for every $\phi\in B_q^{*}$ annihilating $A_q$. In particular, by virtue of Lemma \ref{L:stateineq}, we infer that $\phi(p)=0$ for all such functionals $\phi$, and hence $p$ lies in the double annihilator of $A_q$ inside of $B_q^{**}$. In turn, when viewed as a projection in $B^{**}$, this means that $p$ lies in $A_q^{\perp\perp}$. 
 \end{proof}

\section{Measurably indistinguishable projections}\label{S:indist}
This short section contains a preliminary discussion on some non-commutative topological notions for  projections. These are used in the subsequent section.

We start with a motivating discussion. Given a topological space $X$ satisfying the $T_1$ separation axiom and distinct subsets $E,F\subset X$, there always exists an open subset $U\subset X$ containing $E$ but not containing $F$. In Akemann's non-commutative topology for projections, more diverse phenomena can arise, even in commutative contexts. 

\begin{example}\label{E:top}
Let $\bT\subset\bC$ denote the unit circle and let $B=\rC(\bT)$. For each $t\in \bT$, consider the closed projection $p_t=\chi_{\{t\}}\in B^{**}$. Let $\lambda\in B^*$ denote the state of integration against arc length measure. 
Let $e\in B^{**}$ be the projection satisfying
\[
 (1-e)B^{**}=\{x\in B^{**}: \lambda(x^*x)=0\}.
\]
Since $\{t\}$ has measure zero, we see that $\lambda(p_t)=0$ so that $p_t\leq 1-e$ for each $t\in \bT$, and thus $\bigvee_{t\in \bT}p_t\leq 1-e$. 
Fix a Borel measurable subset $E\subset \bT$ with non-zero measure. We clearly have $\bigvee_{t\in E}p_t\leq \chi_E$. On the other hand, $\chi_E(\lambda)>0$ so $\chi_E$ is not dominated by $1-e$, and in particular, $\chi_E\neq \bigvee_{t\in E}p_t$.

Nevertheless, the projections $\chi_E$ and $ \bigvee_{t\in E}p_t$ are nearly identical: their differences cannot be detected by means of certain Borel projections. More precisely, let $F\subset \bT$ be a Borel subset.  Assume that $\bigvee_{t\in E}p_t\leq \chi_F$, so that $\chi_{\{t\}}\leq \chi_F$ and $t\in F$ for each $t\in E$. This means that $E\subset F$, or $\chi_E\leq \chi_F$.  In other words, while $\bigvee_{t\in E}p_t$ and $\chi_E$ are distinct projections, they are dominated by precisely the same projections arising from Borel subsets.
\qed
\end{example}

Let $B$ be a 
$\rC^*$-algebra. By considering the usual universal representation of $B$, we find a Hilbert space $H_u$ such that $B^{**}\subset B(H_u)$. 
 As in  2.4.1 in  \cite{pedersen1979book} let $(B_{\text{sa}})^m\subset B^{**}$ denote the set of elements in $B^{**}$ that can be obtained as the increasing limit in the SOT of $B(H_u)$ of a self-adjoint net from $B$. A self-adjoint element $x\in B^{**}$ is said to be \emph{universally measurable} if, for every $\eps>0$ and state $\phi$ on $B$, there are elements $y,z\in(B_{\text{sa}})^m $ such that $-y\leq x\leq z$ and $\phi(y+z)<\eps$.  

Projections in $B^{**}$ that are either open or closed are universally measurable \cite[Proposition 3.11.9]{pedersen1979book}. More generally, the class of universally measurable projections also contains the Borel projections in the sense of \cite{combes1970}; see  \cite{brown2014} for more detail.

Let $p,q\in B^{**}$ be two projections. We say that $p$ and $q$ are \emph{measurably indistinguishable} if they are dominated by the same universally measurable projections in $B^{**}$. We will be concerned with the issue of measurable indistinguishability between a projection and the supremum of all minimal projections that it dominates. This is best framed using the following device.

Let $z_{\at}\in B^{**}$ denote the supremum of all minimal projections in $B^{**}$. It is well known that $z_{\at}$ is central: indeed, it coincides with the support projection for the unique weak-$*$ continuous extension to $B^{**}$ of the \emph{atomic representation} of $B$ \cite[Paragraph 4.3.7]{pedersen1979book}. In particular, by virtue of \cite[Lemma 4.3.8]{pedersen1979book}, we see that there is a collection $\{H_\lambda:\lambda\in \Lambda\}$ of Hilbert spaces such that $z_{\at}B^{**}\cong \prod_{\lambda\in \Lambda} B(H_\lambda)$. Consequently, if $p\in B^{**}$ is a projection, we have
\begin{equation}\label{Eq:atomic}
pz_{\at}=\bigvee\{ r: r \text{ minimal projection in } B^{**}, r\leq p\}.
\end{equation}
Henceforth, we will refer to $pz_{\at}$ as the \emph{atomic part} of $p$.  

\begin{proposition}\label{P:induniv}
Universally measurable projections in $B^{**}$ are measurably indistinguishable from their atomic parts.
\end{proposition}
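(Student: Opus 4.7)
The plan is to establish the only nontrivial direction of the measurable indistinguishability: given a universally measurable projection $q\in B^{**}$ with $q\geq pz_{\at}$, I want to show that $q\geq p$. The opposite implication is immediate because $pz_{\at}\leq p$.

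First, I observe that every pure state $\phi$ of $B$ sees an element of $B^{**}$ only through its atomic part. The canonical normal extension of $\phi$ to $B^{**}$ has support a minimal projection $r_\phi$, which therefore lies below $z_{\at}$, and since $z_{\at}$ is central this forces $\phi(x)=\phi(xz_{\at})$ for every $x\in B^{**}$. Multiplying the hypothesis $q\geq pz_{\at}$ by $z_{\at}$ yields $qz_{\at}\geq pz_{\at}$, so for every pure state $\phi$ I obtain $\phi(q)=\phi(qz_{\at})\geq \phi(pz_{\at})=\phi(p)$.

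Next, I transfer this inequality from pure states to arbitrary states of $B$ via a Bishop--de Leeuw / Choquet-type representation. For each state $\omega$ on $B$ (unitized if necessary) there is a probability measure $\nu_\omega$ on the pure state space such that $\omega(b)=\int \phi(b)\,d\nu_\omega(\phi)$ for $b\in B$. This identity extends to any $y\in (B_{\mathrm{sa}})^m$ by writing $y$ as the SOT-increasing limit of a self-adjoint net in $B$ and using monotone convergence together with normality of $\omega$ and of each $\phi$. Then, applying the bracketing definition of universal measurability to the self-adjoint element $q-p\in B^{**}$, for every $\varepsilon>0$ I can find $y,z\in (B_{\mathrm{sa}})^m$ with $-y\leq q-p\leq z$ and $\omega(y+z)<\varepsilon$; this sandwich forces $\phi\mapsto \phi(q-p)$ to be $\nu_\omega$-integrable and yields $\omega(q-p)=\int(\phi(q)-\phi(p))\,d\nu_\omega$.

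Combining the two steps, the integrand is pointwise non-negative by the first step, so $\omega(q-p)\geq 0$ for every state $\omega$ on $B$; hence $q-p\geq 0$, i.e.\ $q\geq p$, as required. The main obstacle is precisely this last technical step: the extension of the Choquet integral representation from $B$ to universally measurable elements of $B^{**}$ rests entirely on the bracketing definition of universal measurability, and in the non-separable setting some care is required to ensure $\nu_\omega$-measurability of $\phi\mapsto \phi(x)$ for the (possibly non-unique) representing measure.
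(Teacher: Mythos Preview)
Your approach is correct and is in fact a direct unpacking of the result the paper simply quotes. The paper's proof is two lines: from $q\geq pz_{\at}$ and centrality of $z_{\at}$ one gets $(q-p)z_{\at}\geq 0$; since $q-p$ is universally measurable (Pedersen, Proposition~4.3.13), Pedersen's Theorem~4.3.15 --- which asserts precisely that the atomic representation is an order isomorphism on the universally measurable elements --- yields $q-p\geq 0$. Your argument is essentially a sketch of the \emph{proof} of Theorem~4.3.15: pure states factor through $z_{\at}$, so $\phi(q-p)\geq 0$ for every pure $\phi$; then the barycentric (Choquet) decomposition, extended first to $(B_{\mathrm{sa}})^m$ via lower semicontinuity and then to universally measurable elements via the bracketing, pushes this inequality to all states. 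The measurability caveat you flag is real but handled in Pedersen's setup because the functions $\phi\mapsto\phi(y)$ for $y\in(B_{\mathrm{sa}})^m$ are lower semicontinuous affine on the state space, and the barycentric calculus for boundary measures is known to extend to such functions. So what the paper gains is brevity by citing the black box; what your route gains is self-containment and a clearer picture of why the atomic part controls everything.
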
 
\begin{proof} 
Let $p,e\in B^{**}$ be universally measurable projections  with $e\geq pz_{\at}$. Since $z_{\at}$ is central, we find $(e-p)z_{\at}\geq 0$.  In turn, because $e-p$ is universally measurable \cite[Proposition 4.3.13]{pedersen1979book},  we  infer that $e\geq p$ by virtue of  \cite[Theorem 4.3.15]{pedersen1979book}.
 \end{proof}

Let us further analyze the case of closed projections, as this is what we will focus on in the next section.  The previous proposition can be interpreted as saying that, roughly speaking, a closed projection in $B^{**}$ essentially coincides with the supremum of all the minimal projections that it dominates. Of course, as illustrated in Example \ref{E:top}, this is not strictly true. Nevertheless, the following does hold.

\begin{corollary}\label{P:topindclosed}
Let $q\in B^{**}$ be a closed projection. Let $C\subset B^{**}$ denote the collection of all closed projections dominating the same minimal projections as $q$ does. Then, $q=\bigwedge_{r\in C} r$. 
\end{corollary}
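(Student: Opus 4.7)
The plan is to establish the two inequalities $\bigwedge_{r\in C} r \le q$ and $q \le \bigwedge_{r\in C} r$ separately, with the latter being the substantive content and the former essentially tautological.

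First I would observe that $q$ itself belongs to $C$. Indeed, $q$ is closed by hypothesis and trivially dominates the same minimal projections as itself. This immediately gives $\bigwedge_{r\in C} r \le q$.

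The main content is then to show $q \le r$ for every $r \in C$. Fix such an $r$. By the description \eqref{Eq:atomic} of the atomic part as the supremum of the minimal subprojections, the hypothesis that $r$ and $q$ dominate the same minimal projections translates directly into the equality $r z_{\at} = q z_{\at}$. Since $r$ is closed, it is universally measurable, and the chain $r \ge r z_{\at} = q z_{\at}$ tells me that $r$ is a universally measurable projection dominating $q z_{\at}$. Now I would invoke Proposition \ref{P:induniv}, which asserts that the closed (hence universally measurable) projection $q$ is measurably indistinguishable from its atomic part $qz_{\at}$: every universally measurable projection dominating $qz_{\at}$ must also dominate $q$. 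Applying this to $r$ yields $q \le r$, completing the argument.

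The argument is short, and the main step to identify is the translation of the minimal-projection hypothesis into the equality $rz_{\at}=qz_{\at}$; once this is done, Proposition \ref{P:induniv} does the remaining work without any further calculation.
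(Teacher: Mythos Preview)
Your proof is correct and uses the same key ingredient as the paper, namely Proposition \ref{P:induniv}. The paper's argument is slightly less direct: rather than applying Proposition \ref{P:induniv} to the closed projection $r$ itself, it fixes an arbitrary open projection $u\ge r$, observes $u\ge qz_{\at}$, deduces $u\ge q$ from Proposition \ref{P:induniv}, and then invokes \cite[Proposition 2.3]{hay2007} (a closed projection equals the infimum of the open projections above it) to conclude $r\ge q$. Your route bypasses this detour by noting that $r$, being closed, is already universally measurable, so Proposition \ref{P:induniv} applies directly to give $r\ge q$ from $r\ge qz_{\at}$. Both arguments are short; yours is marginally cleaner.
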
 
\begin{proof}
Since $z_{\at}$ is central, we have $q\geq qz_{\at}$, so  that $q\in C$ and $q\geq \bigwedge_{r\in C} r$. Conversely, fix $r\in C$ and an open projection $u\in B^{**}$ with $u\geq r$. Then, $u\geq qz_{\at}$ by \eqref{Eq:atomic}, so by Proposition \ref{P:induniv}, it follows that $u\geq q$. Since this holds for every such open projection $u$, applying \cite[Proposition 2.3]{hay2007}, we find that $q\leq r$. We conclude that $q\leq  \bigwedge_{r\in C} r$, so in fact $q=\bigwedge_{r\in C} r$ as desired. 
 \end{proof}

\section{The non-commutative Varopoulos phenomenon}\label{S:ncVar}

We begin by revisiting Example \ref{E:top} to illustrate that a closed projection may have a null atomic part without being null itself, despite both projections being measurably indistinguishable by Proposition \ref{P:induniv}.

\begin{example}\label{E:top2}
Let $\bT\subset\bC$ denote the unit circle, let $B=\rC(\bT)$. For each $t\in \bT$, consider the closed projection $p_t=\chi_{\{t\}}\in B^{**}$. Let $\lambda\in B^*$ denote the state of integration against arc length measure.   Let $K\subset \bT$ be a closed subset with positive measure. The atomic part of the closed  projection $\chi_K\in B^{**}$ is easily seen to be $\bigvee_{t\in K}p_t$ by \eqref{Eq:atomic}. 

Next, let $A\subset B$ denote the disc algebra, i.e.\ the norm-closed subalgebra generated by analytic polynomials.  It is well known that each point of $\bT$ is an $A$-peak point, so that $p_t$ is $A$-null for each $t\in \bT$. It follows that $\bigvee_{t\in E}p_t$ is also $A$-null. On the other hand, $\lambda(\chi_K)>0$, and hence $\chi_K$ is not an $A$-null \cite[Theorem 10.1.2]{rudin2008}
and not in $A^{\perp \perp}$, so that $K$ is not a peak set.  
\qed
\end{example}

The goal of this section is to exhibit conditions under which such a pathology cannot occur. This will be accomplished by proving a non-commutative analogue of Varopoulos' theorem \cite[Theorem 10.2.2]{rudin2008}. First, we record a topological preliminary.

\begin{lemma}\label{L:compact}
Let $B$ be a unital $\rC^*$-algebra and let $q\in B^{**}$ be a closed projection. Let $\{u_\lambda:\lambda\in \Lambda\}$ be a collection of open projections in $B^{**}$ that commute with $q$ and satisfy $q\leq \bigvee_{\lambda\in \Lambda}u_\lambda$. Then, there is a finite set $F\subset \Lambda$ such that $q\leq \bigvee_{\lambda\in F}u_\lambda$.
\end{lemma}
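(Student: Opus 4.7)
My plan is to recast the statement as a ``non-commutative Heine--Borel'' theorem for the identity of $B^{**}$: first I will use the openness of $1-q$ to enlarge the given cover into a cover of $1$, then invoke compactness of $1$ in a unital $\rC^{*}$-algebra, and finally use the commutativity hypothesis to pass back from a finite cover of $1$ to a finite cover of $q$.

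For the first step, I will set $w_\lambda := u_\lambda \vee (1-q)$ for every $\lambda \in \Lambda$. Each $w_\lambda$ is open since the finite join of two open projections is open, and the hypothesis $q \leq \bigvee_\lambda u_\lambda$ immediately yields
\[
\bigvee_{\lambda} w_\lambda \;\geq\; \Bigl(\bigvee_\lambda u_\lambda\Bigr) \vee (1-q) \;\geq\; q \vee (1-q) \;=\; 1.
\]
Thus $\bigvee_\lambda w_\lambda = 1$, and this reduction uses only the closedness of $q$ (not the commutativity).

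The crux of the argument is the following fact: if a family of open projections in $B^{**}$ has join equal to $1$, then some finite subfamily already has join $1$. I will prove this via the bijection between open projections $w \in B^{**}$ and norm-closed right ideals $J_w \subset B$, under which a join of open projections corresponds to the norm-closure of the algebraic sum of the associated right ideals. So $\bigvee_\lambda w_\lambda = 1$ translates to $1 \in \overline{\sum_\lambda J_{w_\lambda}}$, giving a finite $F \subset \Lambda$ and elements $y_\lambda \in J_{w_\lambda}$ with $y := \sum_{\lambda \in F} y_\lambda$ satisfying $\|1 - y\| < 1$. Such a $y$ is invertible in $B$, and writing $1 = y \cdot y^{-1} = \sum_{\lambda \in F} y_\lambda y^{-1}$ places $1$ inside $\sum_{\lambda \in F} J_{w_\lambda}$, forcing $\bigvee_{\lambda \in F} w_\lambda = 1$.

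Setting $v := \bigvee_{\lambda \in F} u_\lambda$, the commutativity hypothesis closes the argument. Since each $u_\lambda$ commutes with $q$ and the commutant of $q$ in $B^{**}$ is a weak-$*$ closed subalgebra, $v$ commutes with $q$. Taking orthogonal complements in $v \vee (1-q) = 1$ then yields $(1-v) \wedge q = 0$; by the commutation, this meet is just the product $q(1-v) = q - qv$, so $q \leq v = \bigvee_{\lambda \in F} u_\lambda$, as desired. The conceptual heart of the argument is the compactness of $1$ used in the third paragraph; it is essentially a special case of the Akemann--Pedersen non-commutative compactness theorem for closed projections in biduals of $\rC^{*}$-algebras, so one could alternatively invoke that result directly in place of the right-ideal argument.
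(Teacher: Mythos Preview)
Your argument is correct. The reduction to an open cover of $1$ via $w_\lambda = u_\lambda \vee (1-q)$ is sound, the right-ideal argument for compactness of $1$ is valid (the key point that $J_{w_\lambda}^{\perp\perp} = w_\lambda B^{**}$ follows from $w_\lambda$ being open, and the invertibility trick with $y^{-1}$ works precisely because right ideals absorb on the right), and the final passage from $v \vee (1-q) = 1$ to $q \le v$ via commutation is clean.

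The paper takes a closely related but more direct route: it observes that $q \le \bigvee_\lambda u_\lambda$ immediately gives $q \wedge \bigwedge_\lambda (1-u_\lambda) = 0$ (no commutation needed here, since this meet is always $0$ when $q$ is below the join), then invokes Akemann's compactness result \cite[Proposition II.10]{akemann1969} directly on the closed projection $q$ and the family of closed projections $\{1-u_\lambda\}$ to extract a finite $F$ with $q \wedge \bigwedge_{\lambda \in F}(1-u_\lambda) = 0$, and only then uses commutation to convert this into $q \le \bigvee_{\lambda \in F} u_\lambda$. So both proofs end with the same commutation step; the difference is that the paper applies Akemann's general compactness theorem to $q$ itself, whereas you first reduce to $q=1$ and then give a self-contained ideal-theoretic proof of that special case. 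Your approach buys independence from the cited result (and is a nice reminder of why compactness of $1$ is elementary in the unital setting), while the paper's is shorter and makes the role of Akemann's theorem explicit.
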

\begin{proof}
By assumption, we see that $q\wedge \left(\bigwedge_{\lambda\in \Lambda}(1-u_\lambda)\right)=0$. Invoking \cite[Proposition II.10]{akemann1969}, we find a finite subset $F\subset \Lambda$ such that $q\wedge \left( \bigwedge_{\lambda\in F}(1-u_\lambda)\right)=0$ or $q\wedge \left( 1-\bigvee_{\lambda\in F}u_\lambda\right)=0$. Equivalently, using the commutation assumption, we see that $q\leq \bigvee_{\lambda\in F}u_\lambda$ as desired.
\end{proof}

The previous result is stated to hold with no requirement of commutativity in \cite{hay2007}, but this  is not valid in general. 
Thankfully, the main claims of that paper appear to be unaffected by this.

We also need the following technical fact. Given a vector space $X$, we denote by $\bM_{m,n}(X)$ the space of $m\times n$ matrices with entries in $X$. When $m=n$, we simply write $\bM_n(X)$.

\begin{lemma}\label{L:duality}
Let $B$ be a $\rC^*$-algebra, let $q\in B^{**}$ be a closed projection and let $n\geq 1$. Let $R\in \bM_{1,n}(qB^{**})$ and $C\in \bM_{n,1}(B^{**}q)$. Fix $\Delta \in \bM_n(B)$, a functional $\phi\in B^*$ and a number $\eps>0$. Then, there are  $S \in \bM_{1,n}(qB)$, $T\in \bM_{n,1}(Bq)$ such that $\|S\|\leq \|R\|$,$ \|T\|\leq \|C\|$ and 
$
| \phi(R\Delta C-S\Delta T)|<\eps.
$
\end{lemma}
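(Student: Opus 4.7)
The plan is to use a two-step weak-$*$ approximation built from Goldstine's theorem at the matrix level, followed by separate weak-$*$ continuity of multiplication.

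First, I will appeal to the isometric identification $\bM_{1,n}(B)^{**} = \bM_{1,n}(B^{**})$ (and similarly for $n\times 1$ matrices), which is standard operator space duality. By Goldstine's theorem, the unit ball of $\bM_{1,n}(B)$ is weak-$*$ dense in the unit ball of $\bM_{1,n}(B^{**})$. Writing $R = qR$ (which holds since the entries of $R$ lie in $qB^{**}$), I can therefore find a net $(R'_\lambda)\subset \bM_{1,n}(B)$ with $\|R'_\lambda\|\leq \|R\|$ converging entrywise in the weak-$*$ topology to $R$. Setting $S_\lambda := qR'_\lambda \in \bM_{1,n}(qB)$ gives $\|S_\lambda\|\leq \|R\|$ (because $\|q\|\leq 1$) and $S_\lambda \to qR = R$ entrywise weak-$*$. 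An analogous argument produces a net $(T_\mu)\subset \bM_{n,1}(Bq)$ with $\|T_\mu\|\leq \|C\|$ and $T_\mu \to C$ entrywise weak-$*$.

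Second, I will exploit separate weak-$*$ continuity of multiplication in $B^{**}$ to control $\phi$. The quantity $R\Delta C$ is a finite sum $\sum_{i,j} R_i \Delta_{ij} C_j$; the map
\[
Y = (Y_1,\ldots,Y_n)^T \longmapsto \phi(R\Delta Y) = \sum_{j} \phi\Bigl(\bigl(\textstyle\sum_i R_i \Delta_{ij}\bigr) Y_j\Bigr)
\]
is weak-$*$ continuous on $\bM_{n,1}(B^{**})$, since each coefficient $\sum_i R_i \Delta_{ij}\in B^{**}$ pairs weak-$*$ continuously against the fixed functional $\phi$. Hence I may pick $\mu$ so that $|\phi(R\Delta C - R\Delta T_\mu)| < \eps/2$. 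Freezing this $T_\mu$, the map $X \mapsto \phi(X\Delta T_\mu)$ is weak-$*$ continuous on $\bM_{1,n}(B^{**})$ by the same reasoning, so I can pick $\lambda$ with $|\phi(R\Delta T_\mu - S_\lambda \Delta T_\mu)| < \eps/2$. Taking $S := S_\lambda$ and $T := T_\mu$, the triangle inequality yields $|\phi(R\Delta C - S\Delta T)| < \eps$, while the norm bounds $\|S\|\leq \|R\|$ and $\|T\|\leq \|C\|$ are already in hand.

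There is no real obstacle here; the argument is a routine Goldstine-plus-separate-continuity exercise. The only mild subtlety is that one cannot approximate $R$ and $C$ simultaneously and then multiply (joint weak-$*$ continuity of multiplication fails in $B^{**}$), which is precisely why the approximation must be done sequentially, freezing $T_\mu$ before choosing $\lambda$. Note also that closedness of $q$ is not actually used in the argument; only $\|q\|\leq 1$ matters for preserving the matrix norm bounds.
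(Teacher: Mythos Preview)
Your argument is correct and in fact more elementary than the paper's. Both proofs follow the same skeleton---apply Goldstine at the matrix level and then use separate weak-$*$ continuity of multiplication in $B^{**}$ to handle the two approximations sequentially---but they differ in where Goldstine is applied. The paper first builds, via Hay's result \cite[Proposition 3.1]{hay2007} (which genuinely uses that $q$ is closed), a completely isometric weak-$*$ homeomorphism $\Xi:\bM_{1,n}(qB)^{**}\to \bM_{1,n}(qB^{**})$, and then applies Goldstine inside $\bM_{1,n}(qB)^{**}$ to approximate $\Xi^{-1}(R)$. You instead apply Goldstine in $\bM_{1,n}(B)^{**}\cong \bM_{1,n}(B^{**})$ to approximate $R$ by rows with entries in $B$, and then simply left-multiply by $q$; the norm bound survives because $\|q\|\leq 1$, and the weak-$*$ limit is preserved because left multiplication by a fixed element of $B^{**}$ is weak-$*$ continuous. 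Your route bypasses the structural identification $(qB)^{**}\cong qB^{**}$ entirely, and, as you observe, does not use the closedness of $q$ at all---any contraction would do. The paper's approach has the advantage of recording that identification explicitly (it may be of independent interest), but for the lemma as stated your argument is both shorter and more general.
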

\begin{proof}
Let $J=B\cap (1-q)B^{**}$. Consider the map $\Phi:B/J\to qB$ defined as
\[
\Phi(b+J)=qb, \quad b \in B.
\]
By \cite[Proposition 3.1]{hay2007}, it follows that $\Phi$ is a completely isometric isomorphism. Hence, $\Phi^{**}:(B/J)^{**}\to (qB)^{**}$ is a completely isometric weak-$*$ homeomorphism. In turn, by \cite[Paragraph 1.4.4]{BLM2004}, there is a completely isometric  weak-$*$ homeomorphism $\Psi:(B/J)^{**}\to B^{**}/J^{\perp\perp}$ such that 
\[
\Psi(b+J)=b+J^{\perp\perp},\quad b\in B.
\]
Next, since $q$ is closed, 
and is the support projection of $J$ 
 \cite{akemann1969},
we obtain that $J^{\perp\perp}=(1-q)B^{**}$. Hence, the map $\Theta: B^{**}/J^{\perp\perp}\to qB^{**}$ defined as
\[
\Theta(x+J^{\perp\perp})=qx, \quad x\in B^{**}
\]
is also a completely isometric weak-$*$ homeomorphism. Therefore, we obtain a completely isometric weak-$*$ homeomorphism $\Omega=\Theta\circ \Psi \circ (\Phi^{**})^{-1} :(qB)^{**}\to qB^{**}$ acting as the inclusion on $qB$.

Invoke now \cite[Theorem 1.4.11]{BLM2004} to find an isometric isomorphism $$\Gamma:\bM_{1,n}(qB)^{**}\to \bM_{1,n}((qB)^{**})$$ acting as the inclusion on $\bM_{1,n}(qB)$. Then, the map $\Xi=\Omega^{(n)}\circ \Gamma: \bM_{1,n}(qB)^{**}\to \bM_{1,n}(qB^{**})$ is an   isometric weak-$*$ homeomorphism acting as the inclusion on  $\bM_{1,n}(qB)$.
Consider the weak-$*$ continuous linear functional $\psi$ on $\bM_{1,n}(qB)^{**}$ defined as
\[
\psi(X)= \phi(\Xi(X)\Delta C), \quad X\in \bM_{1,n}(qB)^{**}.
\]
By Goldstine's lemma
(namely that the unit ball of a Banach space is weak* dense in the ball of the bidual), we can find $S_0\in \bM_{1,n}(qB)$ with $\|S_0\|\leq \|\Xi^{-1}(R)\|= \|R\|$ such that
$$
|\psi(S_0-\Xi^{-1}(R))|<\eps/2.
$$
Put $S=\Xi(S_0)\in \bM_{1,n}(qB)$, which then satisfies $\|S\|\leq \|R\|$ and
\begin{equation}\label{Eq:gold1}
|\phi( S\Delta C-R\Delta C)|<\eps/2.
\end{equation}

To find $T$, we argue in a similar fashion. First, proceed as above to find  an isometric weak-$*$ homeomorphism $\Xi': \bM_{n,1}(Bq)^{**}\to \bM_{n,1}(B^{**}q)$ acting as the inclusion on  $\bM_{n,1}(Bq)$.  Consider  the weak-$*$ continuous linear functional $\psi'$ on $\bM_{n,1}(Bq)^{**}$ defined as
\[
\psi'(X)= \phi(S\Delta \Xi'(X)), \quad X\in \bM_{n,1}(Bq)^{**}.
\]
By Goldstine's lemma, we can find $T_0\in \bM_{1,n}(Bq)$ with $\|T_0\|\leq \|(\Xi')^{-1}C\|=\|C\|$ such that
\[
|\psi' (T_0- (\Xi')^{-1}(C))|<\eps/2.
\]
Put $T=\Xi'(T_0)\in \bM_{1,n}(Bq)$, which then satisfies $\|T\|\leq \|C\|$ and
\begin{equation}\label{Eq:gold2}
|\phi( S\Delta T-S\Delta C)|<\eps/2.
\end{equation}
Finally, combining \eqref{Eq:gold1} and \eqref{Eq:gold2}, we find
\[
| \phi(R\Delta C-S\Delta T)|<\eps
\]
as desired.
\end{proof}

Let $B$ be a $\rC^*$-algebra and let $A\subset B$ be a subalgebra. A projection $e\in B^{**}$ is said to be \emph{$A$-invariant} if $ae=eae$ for each $a\in A$. Further, $e$ is said to be \emph{$A$-coinvariant} if $1-e$ is $A$-invariant. We can now state and prove one of the main results of the paper.

 \begin{theorem}\label{T:ncVar}
 Let $B$ be a unital $\rC^*$-algebra and let $A\subset B$ be a unital norm-closed subalgebra.  Let $q\in B^{**}$ be a projection with the following properties:
 \begin{enumerate}[{\rm(i)}]
 \item $q$ is closed,
 \item there is $\kappa>0$ for which $q$ is a complete $(A,\kappa)$-interpolation projection,
 \item $q$ commutes with $A$, and
 \item the atomic part of $q$ lies in $A^{\perp\perp}$.
\end{enumerate}
Then, for every $\phi\in A^\perp$  and every open $A$-invariant projection $u\in B^{**}$ dominating $q$, we have
\[
 |\phi|(q)\leq  \kappa^3 \sqrt{|\phi|(1)\, | \phi|(u-q)}.
\]
 \end{theorem}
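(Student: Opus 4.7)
The plan is to bound $|\phi|(q)$ by exploiting the polar decomposition $\phi = |\phi|(\cdot v)$ together with the complete interpolation property. Writing $|\phi|(q) = \phi(qv^*)$ (a consequence of $v^*v$ being the support of $|\phi|$), I would first combine the complete $(A,\kappa)$-interpolation of $q$ with Goldstine's lemma (and Lemma~\ref{L:duality} for the $B$-to-$B^{**}$ approximation) to produce $\beta\in A^{\perp\perp}$ satisfying $q\beta = qv^*$ and $\|\beta\|\leq\kappa$. Since $\phi\in A^\perp$ annihilates the weak-$*$ closure $A^{\perp\perp}$, we have $\phi(\beta) = 0$.

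Next, I would exploit condition (iii) to eliminate cross terms. The commutation $q\in A'$ extends by weak-$*$ density to $q\in (A^{\perp\perp})'$, so $q\beta = \beta q$. This identity combined with $\phi(\beta) = 0$ yields the clean reduction
\[
|\phi|(q) = \phi(q\beta q) = -\phi((1-q)\beta(1-q)).
\]
Using the $A$-invariance of $u$, which via weak-$*$ continuity of multiplication extends from $(1-u)au = 0$ ($a\in A$) to $(1-u)\beta u = 0$, and using $q\leq u$, the element $(1-q)\beta(1-q)$ decomposes as $(u-q)\beta(1-q) + (1-u)\beta(1-u)$, since $(1-u)\beta(u-q) = (1-u)\beta(1-u)(u-q) = 0$. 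The first summand is controlled by Lemma~\ref{L:stateineq}, giving $|\phi((u-q)\beta(1-q))|^2 \leq \kappa^2\|\phi\|\,|\phi|(u-q)$, which is already of the desired form.

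The main obstacle is controlling the residual term $\phi((1-u)\beta(1-u))$. A direct application of Lemma~\ref{L:stateineq} produces a bound in terms of $|\phi|(1-u)$ rather than the sought $|\phi|(u-q)$, which is far too weak (as Example~\ref{E:top2} already warns). This is where conditions (iii) and (iv) must be used together with the full power of \emph{complete}, rather than merely bounded, interpolation. My anticipated strategy is to exploit $qz_{\at}\in A^{\perp\perp}$ in conjunction with the compactness Lemma~\ref{L:compact} and the matrix form of Lemma~\ref{L:duality} to build a refined matrix interpolant $\tilde\beta\in\bM_n(A^{\perp\perp})$ whose diagonal is engineered so that the $(1-u)$-blocks of $\beta$ can be absorbed into the $(u-q)$-contribution, at the cost of two additional factors of $\kappa$ from matrix amplification. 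Heuristically, the atomic part condition provides $A^{\perp\perp}$-approximants of $q$ that automatically vanish on $1-u$ (since $qz_{\at}(1-u) = qz_{\at} - qz_{\at}u = 0$ by centrality of $z_{\at}$ and $qu = q$), and it is this asymmetry between $q$ and $1-u$ inside $A^{\perp\perp}$ which permits the absorption.

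The hardest part is therefore the construction of this refined interpolant and the clean extraction of the $\kappa^3$ factor. Proposition~\ref{P:induniv}, asserting that closed projections are measurably indistinguishable from their atomic parts, is the key tool that guarantees the atomic approximation of $q$ is faithful in the sense required for transferring from $qz_{\at}$ back to $q$ without loss of information in the relevant $|\phi|$-measure. The role of Lemma~\ref{L:compact} is to reduce infinite families of $A$-invariant open projections dominating $q$ to finite meets, allowing the matrix interpolation to be applied with a fixed finite size $n$ depending only on the configuration, and in particular independent of $\phi$.
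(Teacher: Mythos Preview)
Your opening moves are sound: approximating the polar part by an element of $A$ (or $A^{\perp\perp}$) via Goldstine and interpolation, exploiting $q\in (A^{\perp\perp})'$ to split $\beta$ along $q$, and using $A$-invariance of $u$ to kill the cross term $(1-u)\beta(u-q)$ are all correct and indeed mirror the paper's structure. Likewise your identification of the obstacle --- the term $\phi((1-u)\beta(1-u))$ --- is exactly right.

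The gap is in your treatment of that residual. Your heuristic that $qz_{\at}(1-u)=0$ is true but does not do the work you want: $qz_{\at}$ is not close to $q$ in norm (only measurably indistinguishable), and there is no mechanism in your outline for converting this abstract fact about $A^{\perp\perp}$ into a norm estimate on $(1-u)\beta(1-u)$. The phrase ``refined matrix interpolant $\tilde\beta$ whose diagonal is engineered so that the $(1-u)$-blocks of $\beta$ can be absorbed'' does not describe a construction; it describes a wish.

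What the paper actually does is build an auxiliary element $d\in A$ (not merely $A^{\perp\perp}$) with two properties: $qd$ is close to $q$ in a weak sense adapted to $\phi$, and $\|(1-u)d\|$ is genuinely small in norm. The source of such a $d$ is the peaking theorem \cite[Theorem 2.1]{BN2012}, applied to each minimal subprojection $p_\lambda\leq q$: since $p_\lambda$ is $A$-null (this uses (iv) plus the Basic Principle) and $p_\lambda\leq u$ with $u$ open, one obtains $a_\lambda\in A$ with $a_\lambda p_\lambda=p_\lambda$ and $\|(1-u)a_\lambda\|<\varepsilon$. One then uses Proposition~\ref{P:induniv} and Lemma~\ref{L:compact} to pass to a finite family, constructs a partition of $q$ subordinate to the associated open projections $f_{\lambda_j}$, and assembles $d=R\,\mathrm{diag}(a_{\lambda_j})\,C$ via the two-sided complete interpolation property and Lemma~\ref{L:duality}. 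The $A$-invariance of $u$ is used here (not only in your cross-term cancellation) to push $(1-u)$ through the $\alpha_j$'s and hit the $a_{\lambda_j}$'s, giving $\|(1-u)d\|\leq\kappa^2\varepsilon$. One then writes $\phi(qx)=\phi((q-d)x)$ and decomposes $q-d=(q-qd)-(u-q)d-(1-u)d$; the three pieces are controlled respectively by the construction of $d$, by Lemma~\ref{L:stateineq}, and by the norm bound on $(1-u)d$. This is where the $\kappa^3=\kappa\cdot\kappa^2$ arises: one $\kappa$ from $\|x\|$ and $\kappa^2$ from $\|d\|$.

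In short, you have correctly isolated the difficulty but not the device that resolves it. The missing idea is not a modification of your interpolant $\beta$, but a separate approximate-identity-like element $d$ built from peaking functions at minimal projections, and the missing external input is \cite[Theorem 2.1]{BN2012}.
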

 \begin{proof}
 Let $0<\eps<1$ and let $\phi\in A^\perp$. We may assume without loss of generality that $\|\phi\|=1$, so that $|\phi|$ is a state. Throughout, we put $\tau=|\phi|$. There is a partial isometry $y\in B^{**}$ such that $\tau=\phi(\cdot y)$ and $\phi=\tau(\cdot y^*)$. By Goldstine's lemma, there is $t\in B$ such that $\|t\|\leq 1$ and  $|\phi(qy-qt)|<\eps.$
 Because $q$ is a complete $(A,\kappa)$-interpolation projection, there is $x\in A$ such that $qx=qt$ and $\|x\|\leq \kappa$.  In particular, we have
 \begin{equation}\label{Eq:pisom}
 \tau(q) =|\phi(qy)|< \eps+|\phi(qx)|.
 \end{equation}
Let $\{p_\lambda:\lambda\in \Lambda\}$ be the set of minimal projections in $B^{**}$ dominated by $q$, so that $q_{\at}=\bigvee_{\lambda\in \Lambda} p_\lambda$ is the atomic part of $q$ by \eqref{Eq:atomic}.
 It follows from \cite[Proposition II.4]{akemann1969} that each $p_\lambda$ is closed. Further, since $qA=qB$ we necessarily have $q_{\at}A=q_{\at}B$. Also, $q_{\at}\in A^{\perp\perp}$ by assumption, so the 
 ``Basic principle" from the introduction
  implies that $q_{\at}$ is $A$-null, and thus so is each $p_{\lambda}$. In particular, we have $p_\lambda\in A^{\perp\perp}$ for every $\lambda\in \Lambda$.

Let $u\in B^{**}$ be an open $A$-invariant projection dominating $q$. In particular, $u$ dominates the atomic part of $q$, so that $u\geq p_\lambda$ for every $\lambda$.  For each $\lambda$, we may thus apply \cite[Theorem 2.1]{BN2012} to find a contraction $a_\lambda\in A$ such that $a_\lambda p_\lambda=p_\lambda$ and 
\begin{equation}\label{Eq:norm1-u}
 \|(1-u)a_\lambda\|<\eps.
\end{equation}
Consider the open  projection $f_\lambda=\chi_{[0,\eps)}(|1-a_\lambda|)\in B^{**}$. Note that $q$ commutes with $A$, and hence with $f_\lambda$ as well.
Since $\|a_\lambda\|=1$ and $a_\lambda p_\lambda=p_\lambda$, we infer  $|1-a_\lambda| p_\lambda=0$ and thus 
\[
(1-f_\lambda)p_\lambda=\chi_{[\eps,1]}(|1-a_\lambda|)p_\lambda=0.
\] 
Therefore, $p_\lambda\leq f_\lambda$. 
It follows that the open projection $\bigvee_{\lambda}f_\lambda$ dominates $\bigvee_{\lambda}p_\lambda$, and hence it also dominates $q$ by Proposition \ref{P:induniv}. We may apply  Lemma \ref{L:compact} to find finitely many indices $\lambda_1,\ldots,\lambda_n$ such that $q\leq f_{\lambda_1}\vee \ldots \vee f_{\lambda_n}$.

 Put $e_1=f_{\lambda_1}$, and for $2\leq j\leq n$, let $e_j=\bigvee_{i=1}^j f_{\lambda_i}-\bigvee_{i=1}^{j-1} f_{\lambda_i}$.  It follows that the collection $\{e_1,\ldots,e_n\}$ is pairwise orthogonal and $\sum_{j=1}^n e_n =\bigvee_{j=1}^n f_{\lambda_j}$.
 Invoking \cite[Proposition V.1.6]{takesaki2002}, we find for each $2\leq j\leq n$ a partial isometry $v_j\in B^{**}$ such that $v_j^*v_j=e_j$ and $v_jv_j^*=e'_j$, where  
 $
  e_j'=f_{\lambda_j}-\left(\bigvee_{i=1}^{j-1}f_{\lambda_i} \right)\wedge f_{\lambda_j}.
 $
 For convenience, put $v_1=1$ and $e_1'=e_1$.
 We compute, for $1\leq j\leq n$, that
 \begin{align*}
 v_j^* a_{\lambda_j}v_j e_j -e_j&=v_j^* a_{\lambda_j}v_j v_j^*v_j -v_j^*v_jv_j^*v_j\\
  &=v_j^*(a_{\lambda_j}e'_j-e'_j)v_j
 \end{align*}
 whence
\[
  \|v_j^* a_{\lambda_j}v_j e_j -e_j\|\leq \|(a_{\lambda_j}-1)e_j'\|\leq \| |1-a_{\lambda_j}|f_{\lambda_j}\|
 \]
 since $e'_j\leq f_{\lambda_j}$.
 We conclude, for each $1\leq j\leq n$, that
 \begin{equation}\label{Eq:normej}
  \|v_j^* a_{\lambda_j}v_j e_j -e_j\|\leq \eps.
 \end{equation}
 Consider $V=\begin{bmatrix}qe_1v_1^* & \ldots & qe_n v_n^*\end{bmatrix}$ and $E=\begin{bmatrix} q e_1 & \ldots & qe_n \end{bmatrix}$ as elements in $\bM_{1,n}(qB^{**})$. We find 
 \[
 VV^*=\sum_{j=1}^n qe_j v_j^*v_j e_j q=qEE^*q
 \]
 and
 \[
 EE^*=\sum_{j=1}^n q e_j q=q \left(\bigvee_{j=1}^n f_{\lambda_j}\right)q=q
 \]
 whence $\|V\|=\|E\|=1$.
 If we set $\Delta=\diag(a_{\lambda_1},\ldots,a_{\lambda_n})$, then
 \begin{align*}
 V\Delta V^*-q&=\sum_{j=1}^n qe_j v_j^* a_{\lambda_j} v_j e_j q-\sum_{j=1}^n q e_j q\\
  &=\sum_{j=1}^n qe_j (v_j^* a_{\lambda_j} v_je_j-e_j) e_j q\\
  &=E\diag(v_j^*a_{\lambda_j}v_j e_j-e_j)E^*.
 \end{align*}
 By \eqref{Eq:normej} we infer
 \begin{equation}\label{Eq:rhoDelta}
 \|V\Delta V^*-q \|\leq \eps.
 \end{equation}
 By Lemma \ref{L:duality}, we  may now  choose $s_1,\ldots,s_n\in B$ and $t_1,\ldots,t_n\in B$ such that, upon setting 
 \[
  S=\begin{bmatrix}s_1& \ldots &s_n \end{bmatrix} \qand T=\begin{bmatrix}t_1\\ \vdots \\t_n \end{bmatrix} 
 \]
 we find 
 \[
 \|qS\|\leq \|V\|=1 \qand \|Tq\|\leq \|V^*\|=1
 \]
 along with
 \begin{equation}\label{Eq:sapprox}
 \left| \tau\left(( V \Delta V^*-qS\Delta Tq)xy^*\right)\right|<\eps.
 \end{equation}
 Since $q$ is a complete $(A,\kappa)$-interpolation projection, we can find $\alpha_1,\ldots,\alpha_n\in A$ along with $\beta_1,\ldots,\beta_n\in A$ such that, if we put
 \[
  R=\begin{bmatrix}\alpha_1& \ldots &\alpha_n \end{bmatrix} \qand C=\begin{bmatrix}\beta_1\\ \vdots \\ \beta_n \end{bmatrix},
 \]
 then 
 \begin{equation}\label{Eq:RCinterp}
 qS=qR \qand Tq=Cq
 \end{equation}
 while
 \[
 \|R\|\leq\kappa\|qS\|\leq\kappa \qand \|C\|\leq \kappa \|Tq\|\leq\kappa.
 \]
 We  define $d=R\Delta C=\sum_{i=1}^n \alpha_i a_{\lambda_i} \beta_i \in A$ and note that
 \begin{equation}\label{Eq:dnorm}
 \|d\|\leq \|R\| \|\Delta\| \|C\|\leq  \kappa^2.
 \end{equation} 
Since $u$ is $A$-invariant, we find $(1-u)d=(1-u)\sum_{j=1} \alpha_j (1-u)a_{\lambda_j} \beta_j$ so
\[
 \|(1-u)d\|\leq \|R\| \|C\| \|\diag((1-u)a_{\lambda_j})\|.
\]
Using \eqref{Eq:norm1-u}, we thus find
\begin{equation}\label{Eq:norm1-ud}
  \|(1-u)d\|\leq \kappa^2 \eps.
\end{equation}
On the other hand, using that $q$ commutes with $A$ we may write $qd=qdq$, and by \eqref{Eq:rhoDelta},\eqref{Eq:sapprox} and \eqref{Eq:RCinterp} we find that
\begin{align*}
|\tau( (qd-q)xy^*)|&=|\tau((qdq-q)xy^*)|=|\tau((qR\Delta Cq-q)xy^*)|\\
 &=|\tau((qS\Delta Tq-q)xy^*)|< \eps+|\tau((V\Delta V^*-q)xy^*)|\\
 &< (1+\|x\|)\eps
\end{align*}
whence
\begin{equation}\label{Eq:tauqdq}
|\tau( (qd-q)xy^*)|<(1+\kappa)\eps.
\end{equation}
Note that $dx\in A$ so that $\phi(dx)=0$ since $\phi\in A^\perp$. Therefore,
\begin{align*}
\phi(qx)&=\phi((q-d)x)=\tau((q-d)xy^*)\\
 &=\tau((q-qd)xy^*)-\tau((u-q)dxy^*)-\tau((1-u)dxy^*)
\end{align*}
so by Lemma \ref{L:stateineq} we find
\[
|\phi(qx)|\leq|\tau((qd-q)xy^*)|+ \|x\| \|d\| \tau(u-q)^{1/2}+\|x\|\|(1-u)d\|.
\]
Combined with \eqref{Eq:pisom},\eqref{Eq:dnorm},\eqref{Eq:norm1-ud} and \eqref{Eq:tauqdq}, this in turn implies
\begin{align*}
 \tau(q)&<\eps+|\phi(qx)|\leq \eps+ (1+\kappa)\eps+\kappa^3 \tau(u-q)^{1/2}+\kappa^3\eps.
\end{align*}
 Letting $\eps\to 0$ yields the desired conclusion.
 \end{proof}

 \subsection{Atomic parts and double annihilators}
Theorem \ref{T:ncVar} makes it advantageous to determine when the atomic part of a projection lies in the double annihilator of a subalgebra. As far as we know, there are no available general criteria guaranteeing this. We thus set out to identify a class of examples where this property is satisfied.

 Our class of examples will involve tensor products. We recall some relevant facts here, since several preliminaries are needed. 
Let $B$ and $D$ be $\rC^*$-algebras such that $B$ is nuclear. It follows from \cite[Theorem 3.1]{AB1980} that there is an injective $*$-homomorphism $\sigma: B^{**}\otimes_{\min} D^{**}\to (B\otimes_{\min} D)^{**}$ with the following properties.
\begin{enumerate}[{\rm (i)}]
\item $\sigma(b\otimes d)=b\otimes d$ for each $b\in B$ and $d\in D$.
\item Let $\sigma_B:B^{**}\to (B\otimes_{\min} D)^{**}$ and $\sigma_D:D^{**}\to (B\otimes_{\min} D)^{**}$ be the restriction $*$-homomorphisms arising from $\sigma$ (see for instance \cite[Theorem 3.2.6]{BO2008}). Then, $\sigma_B$ and $\sigma_D$ are both weak-$*$ continuous.
\end{enumerate}
Via this map $\sigma$, we will view $B^{**}\otimes_{\min} D^{**}$ as a $\rC^*$-subalgebra of $(B\otimes_{\min} D)^{**}$. In particular, given projections $q\in B^{**}$ and $r\in D^{**}$, $q\otimes r$ is a projection in $(B\otimes_{\min} D)^{**}$.  For the following, recall that if $A\subset B$ is a subalgebra, then $$A\otimes_{\min}D\subset B\otimes_{\min}D$$ by injectivity of the minimal tensor product.

\begin{lemma}\label{L:tensorAperpperp}
Let $B$ and $D$ be 
 $\rC^*$-algebras such that $B$ is nuclear. Let $q\in B^{**}$ and $r\in D^{**}$ be projections, with $r$ 
 open or closed. Let $A\subset B$ be a 
 closed subalgebra and assume that $q\in A^{\perp\perp}$. Then, $q\otimes r\in (A\otimes D)^{\perp\perp}$.
\end{lemma}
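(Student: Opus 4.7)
The plan is to establish this by a duality argument: show that every continuous linear functional $\phi$ on $B \otimes_{\min} D$ annihilating $A \otimes D$ must satisfy $\widetilde{\phi}(q \otimes r) = 0$, where $\widetilde{\phi}$ denotes the weak-$*$ continuous extension of $\phi$ to $(B \otimes_{\min} D)^{**}$. By Hahn--Banach, this will imply $q \otimes r \in (A \otimes D)^{\perp\perp}$. The key tools are the weak-$*$ continuity of both $\sigma_B$ and $\sigma_D$ recalled just before the lemma, together with the factorization $\sigma(q \otimes r) = \sigma_B(q)\, \sigma_D(r)$ coming from $\sigma$ being a $*$-homomorphism.

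The argument would proceed in two ``sliding'' steps. First, fix $a \in A$: the functional $d \mapsto \phi(a \otimes d)$ on $D$ vanishes identically since $a \otimes d \in A \otimes D$, so its weak-$*$ continuous bidual extension to $D^{**}$ also vanishes. Since both $y \mapsto \widetilde{\phi}(\sigma_B(a)\, \sigma_D(y))$ and that bidual extension are weak-$*$ continuous and agree on $D$ (using $\sigma_B(a)\,\sigma_D(d) = \sigma(a \otimes d) = a \otimes d$), they coincide on all of $D^{**}$. In particular, $\widetilde{\phi}(\sigma_B(a)\, \sigma_D(r)) = 0$ for every $a \in A$. Second, consider the map $\psi: B^{**} \to \bC$ defined by $\psi(x) = \widetilde{\phi}(\sigma_B(x)\, \sigma_D(r))$. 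By the weak-$*$ continuity of $\sigma_B$, of right multiplication by the fixed element $\sigma_D(r)$ on the von Neumann algebra $(B \otimes_{\min} D)^{**}$, and of $\widetilde{\phi}$, the functional $\psi$ is weak-$*$ continuous on $B^{**}$. It vanishes on $A$ by the previous step, so it vanishes on the weak-$*$ closure $A^{\perp\perp}$. Thus $\psi(q) = \widetilde{\phi}(\sigma_B(q)\, \sigma_D(r)) = \widetilde{\phi}(q \otimes r) = 0$, which finishes the argument.

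The only point requiring real care is the book-keeping of the various embeddings $\sigma$, $\sigma_B$, $\sigma_D$, in order to justify both the multiplicative identity $\sigma_B(q)\sigma_D(r) = \sigma(q \otimes r)$ inside $(B \otimes_{\min} D)^{**}$ and the agreement $\sigma_B(a)\sigma_D(d) = a \otimes d$ on elementary tensors; both follow directly from the setup recalled before the lemma. One feature of this approach worth flagging is that it does not appear to use the hypothesis that $r$ be open or closed, and seems to work for any projection $r \in D^{**}$. I would suspect that the authors' intended proof uses this hypothesis to give a more concrete construction via monotone approximating nets of positive contractions in $D$ --- which is natural for open $r$, and reducible to that case for closed $r$ when $D$ is unital --- with the statement then phrased to match.
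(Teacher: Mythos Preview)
Your argument is correct, and it takes a genuinely different route from the paper's proof. The paper argues constructively: for $r$ open it picks an increasing net $(d_j)$ of positive contractions in $D$ with $d_j \to r$ weak-$*$, observes that in the universal representation this convergence is strong, picks a bounded net $(a_i)$ in $A$ with $a_i \to q$ weak-$*$ via Goldstine, and then uses that the product of a bounded weak-$*$ convergent net with a bounded strongly convergent net converges weak-$*$ to conclude $a_i \otimes d_j \to q \otimes r$ in $(B\otimes_{\min}D)^{**}$. The closed case is then reduced to the open one via $q\otimes r = q\otimes 1 - q\otimes(1-r)$. The open/closed hypothesis is essential there precisely to upgrade one net to strong convergence, since the product of two merely weak-$*$ convergent bounded nets need not converge.

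Your dual approach replaces this joint-continuity issue with two applications of \emph{separate} weak-$*$ continuity of multiplication in the von Neumann algebra $(B\otimes_{\min}D)^{**}$, sliding first in the $D$-variable and then in the $B$-variable. This is cleaner and, as you correctly observe, does not require $r$ to be open or closed; it works for any projection $r\in D^{**}$ (indeed for any element of $D^{**}$). So your version is strictly more general than the stated lemma. The paper's approach has the modest advantage of being more explicit about exhibiting $q\otimes r$ as a weak-$*$ limit of elements of $A\otimes D$, but for the purposes the lemma is put to later this is not needed.
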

\begin{proof} 
We first assume that $r$ is open.  
Let $(d_j)$ be a net of positive contractions in $D$ increasing to $r$ in the weak-$*$ topology of $D^{**}$. Then $(\sigma_D(d_j))$ increases to $\sigma_D(r)$ in the weak-$*$ topology of $(B\otimes_{\min} D)^{**}$. 
In the universal representation of $B\otimes_{\min}D$, this implies that in fact $(\sigma_D(d_j))$ increases to $\sigma_D(r)$ in  the strong operator topology. On the other hand, 
by Goldstine's lemma there is a net $(a_i)$ of contractions in $A$ converging to $q$ in the weak-$*$ topology of $B^{**}$.   We see that the net $(\sigma_B(a_i)\sigma_D(d_j))_{(i,j)}$ converges to $\sigma(q\otimes r)$ in the weak-$*$ topology of $(B\otimes_{\min} D)^{**}$,
being  the product of a 
bounded weak-$*$ converging net and a bounded strongly converging net. Each $\sigma_B(a_i)\sigma_D(d_j)=\sigma(a_i\otimes d_j)$ lies in the image of $A\otimes D$ inside of $(B\otimes_{\min} D)^{**}$, and therefore $q\otimes r \in (A\otimes_{\min} D)^{\perp\perp}$.

If $r$ is closed, then by the above $q\otimes (1-r)$ lies in $(A\otimes_{\min} D)^{\perp\perp}$, and hence so does $q\otimes r$.
\end{proof}

Before proving the final preliminary result, recall that for a projection $p\in B^{**}$, the following statements are equivalent.
\begin{enumerate}[{\rm (i)}]
\item The projection $p$ is minimal in $B^{**}$.
\item $pB^{**}p=\bC p$.
\item 
There is a pure state $\omega$ on $B$ such that $\omega(b)p=pbp$ for each $b\in B$. 
\end{enumerate}
Using this equivalence, it is easily verified that if $p\in B^{**}$ and $q\in D^{**}$ are minimal projections, then the projection $p\otimes q$ is also minimal in $(B\otimes_{\min}D)^{**}$.

\begin{lemma}\label{L:minprojtensor}
Let $B$ and $D$ be $\rC^*$-algebras such that $B$ is commutative. Let $p\in (B\otimes_{\min} D)^{**}$ be a minimal projection. Then, there are minimal projections $e\in B^{**},f\in D^{**}$ such that $p=e\otimes f$.
\end{lemma}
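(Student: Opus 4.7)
The approach is to identify $B \cong \rC_0(X)$ for a locally compact Hausdorff space $X$, so that $B \otimes_{\min} D \cong \rC_0(X, D)$, and to establish that the pure state $\omega$ associated to $p$ is of product form $\chi_x \otimes \omega_D$, where $\chi_x$ is evaluation at some point $x \in X$ and $\omega_D$ is a pure state on $D$. With this in hand, the conclusion will follow by taking $e \in B^{**}$ and $f \in D^{**}$ to be the minimal projections corresponding to $\chi_x$ and $\omega_D$, and then invoking the fact recalled just before the lemma to see that $e \otimes f$ is a minimal projection in $(B \otimes_{\min} D)^{**}$.

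Concretely, let $\omega$ be the unique pure state on $B \otimes_{\min} D$ satisfying $pcp = \omega(c) p$ for $c \in B \otimes_{\min} D$. The first step is to show $\omega|_{B \otimes 1}$ is a character (interpreted via the multiplier algebra if $B$ is non-unital). The key observation is that $B$ is commutative and $B \otimes 1$ commutes elementwise with $1 \otimes D$, so in the GNS representation $\pi_\omega$, the image $\pi_\omega(B \otimes 1)$ commutes with $\pi_\omega(B \otimes_{\min} D)$. Irreducibility of $\pi_\omega$ and Schur's lemma then force $\pi_\omega(b \otimes 1) \in \bC I$ for every $b \in B$, giving a character $\chi_x$ with $\omega(b \otimes 1) = \chi_x(b) = b(x)$.

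For the factorization, note that for any $g \in B$ with $g(x) = 0$ and $d \in D$, one has $\pi_\omega(g \otimes d) = \pi_\omega(g \otimes 1)\pi_\omega(1 \otimes d) = 0$. The kernel of the evaluation map $\rC_0(X, D) \to D$ at $x$ is the closed linear span of such elementary tensors, hence $\omega$ annihilates this kernel and descends to a state $\omega_D$ on $D$ with $\omega(c) = \omega_D(c(x))$. Purity of $\omega_D$ follows from purity of $\omega$: any convex decomposition $\omega_D = \tfrac12(\psi_1 + \psi_2)$ pulls back to one for $\omega$, forcing $\psi_1 = \psi_2 = \omega_D$.

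Finally, by the fact preceding the lemma, $e \otimes f$ is a minimal projection in $(B \otimes_{\min} D)^{**}$, and a direct computation on elementary tensors confirms its associated pure state is precisely $\chi_x \otimes \omega_D = \omega$. Two minimal projections sharing the same pure state must coincide: extending $\omega$ weak-$*$ continuously to the bidual, $\omega(e \otimes f) = \omega(p) = 1$, and therefore $p(e \otimes f) p = \omega(e \otimes f) p = p$ gives $p \leq e \otimes f$, with equality following by symmetry. The main subtlety is making the commutation argument precise in the non-unital case, which is handled using approximate units or by passing to the multiplier algebra.
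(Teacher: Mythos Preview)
Your argument is correct and follows the same strategy as the paper: associate to $p$ its pure state $\omega$, factor $\omega$ as a product of pure states on $B$ and $D$, take the corresponding minimal projections $e$ and $f$, and then conclude $p = e \otimes f$ by comparing support projections. The only difference is that the paper obtains the factorization $\omega = \phi \otimes \psi$ in one line by citing \cite[Theorem IV.4.14]{takesaki2002}, whereas you reprove this fact directly via the commutant argument in the GNS representation (which is essentially the standard proof of that theorem). Your direct route is self-contained and illuminates why commutativity of $B$ is used, at the cost of the non-unital bookkeeping you flag; the paper's citation is shorter and sidesteps that issue entirely. The concluding step---using $\omega(e\otimes f)=1$ to force $p \leq e\otimes f$ and then invoking minimality---is identical in both.
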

\begin{proof}
There is a pure state $\omega$ on $B\otimes_{\min}D$ with support projection equal to $p$. By \cite[Theorem IV.4.14]{takesaki2002}, there are pure states $\phi$ of $B$ and $\psi$ of $D$ such that $\omega=\phi\otimes \psi$. Let $e\in B^{**}$ and $f\in D^{**}$ denote the support projections of $\phi$ and $\psi$, respectively. These are both minimal, so that $e\otimes f$ is minimal as well. On the other hand, we have $\omega(e\otimes f)=1$, so $e\otimes f \geq p$, which then forces $p=e\otimes f$. 
\end{proof}

We now arrive at our class of examples where  all projections have their atomic parts in the double annihilator of a subalgebra.

\begin{proposition}\label{P:CXD}
Let $X$ be a compact metric space and let $A\subset \rC(X)$ be a unital norm-closed subalgebra separating the points of $X$. Assume that $X$ is the Choquet boundary of $A$. Let $D$ be a unital $\rC^*$-algebra. Then, every minimal projection in $(\rC(X)\otimes_{\min}D)^{**}$ lies in $(A\otimes_{\min}D)^{\perp\perp}$. In particular, the atomic part of any projection in $(\rC(X)\otimes_{\min}D)^{**}$ lies in $(A\otimes_{\min}D)^{\perp\perp}$.
\end{proposition}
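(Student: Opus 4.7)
The plan is to handle minimal projections first, then bootstrap to atomic parts of arbitrary projections. The first part is a straightforward combination of Lemma~\ref{L:minprojtensor} and Lemma~\ref{L:tensorAperpperp}; the second part uses the type I structure of the atomic corner together with weak-$*$ closedness of the double annihilator.

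First I would fix a minimal projection $p\in (\rC(X)\otimes_{\min}D)^{**}$. Since $\rC(X)$ is commutative, Lemma~\ref{L:minprojtensor} produces minimal projections $e\in \rC(X)^{**}$ and $f\in D^{**}$ with $p = e\otimes f$. The minimal projections of $\rC(X)^{**}$ are supports of pure states, which on a commutative $\rC^*$-algebra are point evaluations, so $e = \chi_{\{x\}}$ for some $x\in X$. Because $X$ is compact metric, $A$ is a separable unital function algebra, and in this setting Bishop's theorem ensures that every point of the Choquet boundary is an $A$-peak point. Hence $\chi_{\{x\}}$ is an $A$-peak projection, which in particular puts $e \in A^{\perp\perp}$. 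On the other side, $f$ is closed in Akemann's sense because minimal projections are automatically closed (as invoked in the proof of Theorem~\ref{T:ncVar} via \cite[Proposition II.4]{akemann1969}), and of course $\chi_{\{x\}}$ is closed as well. Since $\rC(X)$ is nuclear, Lemma~\ref{L:tensorAperpperp} then yields $p = e\otimes f \in (A\otimes_{\min}D)^{\perp\perp}$.

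For the ``in particular'' assertion, let $p\in (\rC(X)\otimes_{\min}D)^{**}$ be an arbitrary projection and put $q=pz_{\at}$. Using the identification $z_{\at}(\rC(X)\otimes_{\min}D)^{**}\cong \prod_{\lambda} B(H_\lambda)$ from \cite[Lemma 4.3.8]{pedersen1979book}, I would decompose $q$ as the supremum of a pairwise orthogonal family $\{r_i\}_{i\in I}$ of one-dimensional projections in the corner, each of which is a minimal projection in the full bidual $(\rC(X)\otimes_{\min}D)^{**}$. Each $r_i$ lies in $(A\otimes_{\min}D)^{\perp\perp}$ by the first paragraph, and since this is a vector subspace every finite sum $\sum_{i\in F} r_i$ lies there too. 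The net of finite sums is monotone increasing and bounded above by $q$, so it converges weak-$*$ to $q$. Weak-$*$ closedness of $(A\otimes_{\min}D)^{\perp\perp}$ then gives $q\in (A\otimes_{\min}D)^{\perp\perp}$, as desired.

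I do not anticipate any serious obstacle here. The one ingredient not proved in the paper is Bishop's classical identification of the Choquet boundary with the set of peak points in the separable setting, which is standard. The hardest conceptual step is recognizing that both tensor factors can be brought within the scope of Lemma~\ref{L:tensorAperpperp}; the passage to the atomic part is then just the standard type I decomposition plus a routine monotone convergence argument.
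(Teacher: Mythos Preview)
Your proof is correct and follows essentially the same route as the paper's: split a minimal projection via Lemma~\ref{L:minprojtensor}, identify the $\rC(X)$-factor as $\chi_{\{x\}}$ for a peak point $x$, and apply Lemma~\ref{L:tensorAperpperp}. You are slightly more careful than the paper in two places---explicitly checking that the $D$-factor $f$ is closed so that Lemma~\ref{L:tensorAperpperp} applies, and spelling out the orthogonal-sum/weak-$*$ limit argument for the atomic part where the paper simply invokes \eqref{Eq:atomic}---but the underlying argument is the same.
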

\begin{proof}
Let $q\in (\rC(X)\otimes_{\min}D)^{**}$ be a minimal projection. By Lemma \ref{L:minprojtensor}, there are minimal projections $e\in \rC(X)^{**}$ and  $f\in D^{**}$ such that $q=e\otimes f$. 
As is well-known, $e$ is simply the characteristic function of a point $x\in X$. By \cite[Theorem II.11.3]{gamelin1969}, 
we see that $x\in X$ is an $A$-peak point, since it lies in the Choquet boundary.  Hence $e\in A^{\perp\perp}$, so by Lemma \ref{L:tensorAperpperp}, we conclude that 
$q \in (A\otimes_{\min}D)^{\perp\perp}$. This proves the first statement. The second statement immediately follows from the first along with \eqref{Eq:atomic}.
\end{proof}

\subsection{Recovering Varopoulos' null projections}
We identify special cases where the conclusion of Theorem \ref{T:ncVar} can be strengthened to recover the fact that $q$ is $A$-null, as in Varopoulos' original result. More precisely, we impose a commutation condition between $A$ and the net approximating the closed projection $q$.

\begin{corollary}\label{C:ncVarcomm}
 Let $B$ be a unital $\rC^*$-algebra and let $A\subset B$ be a unital norm-closed subalgebra.  Let $q\in B^{**}$ be a projection with the following properties:
 \begin{enumerate}[{\rm(i)}]
 \item there is a net $(b_i)$ of positive contractions in $B$ commuting with $A$ and decreasing to $q$ in the weak-$*$ topology of $B^{**}$, 
 \item there is $\kappa>0$ for which $q$ is a complete $(A,\kappa)$-interpolation projection, and
 \item the atomic part of $q$ lies in $A^{\perp\perp}$.
\end{enumerate}
Then, $q$ is $A$-null.
\end{corollary}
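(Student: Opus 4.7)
The plan is to verify the hypotheses of Theorem \ref{T:ncVar} and then, for each fixed $\phi\in A^\perp$, build a net of open $A$-invariant projections $u$ dominating $q$ for which $|\phi|(u-q)\to 0$; feeding these into Theorem \ref{T:ncVar} forces $|\phi|(q)=0$.

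First I would observe that hypothesis (i) of the corollary already delivers hypotheses (i) and (iii) of Theorem \ref{T:ncVar}: the net $(1-b_i)$ is an increasing net of positive contractions with weak-$*$ limit $1-q$, so $q$ is closed; and separate weak-$*$ continuity of multiplication in $B^{**}$ propagates the identities $ab_i=b_ia$ to $aq=qa$ for every $a\in A$. Hypotheses (ii) and (iv) are given. So $q$ is a legitimate input to Theorem \ref{T:ncVar}.

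Now fix $\phi\in A^\perp$. For each index $i$ and each $\lambda\in(0,1)$, set
\[
u_{i,\lambda} := \chi_{(\lambda,1]}(b_i)\in B^{**}.
\]
This is an open projection (spectral projection of an element of $B$ at a set relatively open in its spectrum), it commutes with $A$ because $b_i$ does, and the standard fact that $0\le q\le b_i\le 1$ together with $q$ being a projection forces $b_iq=qb_i=q$ shows that $q$ lies below the spectral projection of $b_i$ at $1$, hence below $u_{i,\lambda}$. In particular $u_{i,\lambda}$ is open, $A$-invariant, and dominates $q$, which is exactly the class of projections to which Theorem \ref{T:ncVar} applies.

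To estimate the gap, I would exploit that $b_i$ commutes with $q$ and with $u_{i,\lambda}$, so functional calculus in the abelian $\rC^*$-algebra they generate yields
\[
b_i \;\ge\; q + \lambda(u_{i,\lambda}-q) + 0\cdot(1-u_{i,\lambda}) \;=\; \lambda u_{i,\lambda} + (1-\lambda)q,
\]
and hence $b_i - q \ge \lambda(u_{i,\lambda}-q)$. Applying the positive functional $|\phi|$ gives $|\phi|(u_{i,\lambda}-q) \le \lambda^{-1}|\phi|(b_i-q)$. Since $|\phi|\in B^*$ is weak-$*$ continuous on $B^{**}$ and $b_i\to q$ weak-$*$, the right-hand side tends to $0$. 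Then Theorem \ref{T:ncVar} yields
\[
|\phi|(q) \;\le\; \kappa^3 \sqrt{|\phi|(1)\,|\phi|(u_{i,\lambda}-q)} \;\longrightarrow\; 0,
\]
so $|\phi|(q)=0$. As $\phi\in A^\perp$ was arbitrary, $q$ is $A$-null.

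The proof is essentially a direct application of Theorem \ref{T:ncVar}; the only step that requires a moment's thought is converting the commuting approximants $b_i$ into the open $A$-invariant supersets of $q$ demanded by that theorem. This is precisely where the commutation clause in hypothesis (i) is used in an essential way: it is what guarantees that the spectral projections $\chi_{(\lambda,1]}(b_i)$ land in the commutant of $A$ and are therefore $A$-invariant.
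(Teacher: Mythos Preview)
Your proof is correct and follows essentially the same strategy as the paper: verify the hypotheses of Theorem~\ref{T:ncVar}, and then produce open $A$-invariant projections $u\ge q$ with $|\phi|(u-q)$ small. The only difference is in how the open projection is manufactured. The paper lets $C$ be the commutative $\rC^*$-subalgebra generated by $1$ and the $b_i$, and then invokes Akemann's regularity result \cite[Proposition~II.3]{akemann1969} to obtain an open $u\in C^{\perp\perp}$ with $|\phi|(u-q)<\eps$; since $C$ commutes with $A$, so does $u$. You instead construct $u$ explicitly as a spectral projection $\chi_{(\lambda,1]}(b_i)$ and bound $|\phi|(u-q)$ by $\lambda^{-1}|\phi|(b_i-q)$ via the elementary operator inequality $b_i-q\ge\lambda(u_{i,\lambda}-q)$. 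Your route is a bit more self-contained and makes transparent exactly where commutation with $A$ is being used; the paper's route is shorter but leans on an external reference whose proof, in any case, proceeds via spectral projections of approximants much as you do. One small point worth making explicit in your write-up is why $u_{i,\lambda}$ commutes with $A$: it is a weak-$*$ limit of continuous functions of $b_i$, each of which commutes with $A$, and separate weak-$*$ continuity of multiplication transfers this to the limit.
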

\begin{proof}
First note that assumption (i) implies that $q$ is closed and commutes with $A$. Let $\eps>0$ and $\phi\in A^\perp$ with $\|\phi\|=1$. Let $C\subset B$ denote the $\rC^*$-subalgebra generated by $1$ and the elements $b_i$. Hence $A$ and $C$ commute. By virtue of \cite[Proposition II.3]{akemann1969} (and its proof), there is an open projection $u\in B^{**}$ lying in $C^{\perp\perp}$ such that $|\phi|(u-q)<\eps$. In particular, $u$ commutes with $A$, and hence is $A$-invariant.
Theorem \ref{T:ncVar} then gives $ |\phi|(q)\leq \kappa^3 \sqrt{\eps}$, and the proof is complete upon letting $\eps\to 0$.
\end{proof}
 
 Next, we show how the previous result implies Varopoulos' original statement.  Before proceeding with the proof, we record a useful fact about interpolation projections,  inspired by the trick used in \cite[Lemma 10.1]{DH2023}.

\begin{proposition}\label{P:interpnuclear}
Let $B$ be a unital nuclear $\rC^*$-algebra and let $A\subset B$ be a unital norm-closed subalgebra. Let $q\in B^{**}$ be a central closed projection such that $Aq=Bq$ as subspaces in $B^{**}$. Then, there is $\kappa>0$ such that $q$ is a complete $(A,\kappa)$-interpolation projection.
\end{proposition}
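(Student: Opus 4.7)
The plan is to reduce the matricial interpolation to a single-level open-mapping statement and then use nuclearity of $B$ to obtain uniform control across matrix levels.

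Since $q\in B^{**}$ is central and closed, the map $\pi\colon B\to B^{**}$ defined by $\pi(b)=bq$ is a $*$-homomorphism onto the $\rC^*$-subalgebra $Bq\subset B^{**}$, with kernel $J=\{b\in B:bq=0\}$; hence $B/J\cong Bq$ as $\rC^*$-algebras, completely isometrically. Centrality of $q$ also implies that $a_{ij}q=b_{ij}q$ if and only if $qa_{ij}=qb_{ij}$, so any one-sided interpolation bound automatically yields the two-sided one; we therefore focus on producing $[a_{ij}]\in\bM_n(A)$ with $a_{ij}q=b_{ij}q$ and controlled norm. Because $q$ is closed, $Aq$ is closed in $B^{**}$ by the argument recalled in the Basic principle's proof (see \cite[Proposition 3.1]{hay2007}). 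Hence the restriction $\pi|_A\colon A\twoheadrightarrow Aq=Bq$ is a bounded surjection of Banach spaces and the open mapping theorem provides $\kappa_0>0$ such that every $b\in B$ admits some $a\in A$ satisfying $aq=bq$ and $\|a\|\leq\kappa_0\|b\|$.

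Applying this scalar case to the triple $(\bM_n(A),\bM_n(B),q_n)$, where $q_n=I_n\otimes q\in\bM_n(B)^{**}$ is a central closed projection satisfying $\bM_n(A)q_n=\bM_n(B)q_n$, produces at each level $n$ some constant $\kappa_n$, and the content of the proposition is that $\sup_n\kappa_n<\infty$. Here nuclearity of $B$ is essential and enters through the completely positive approximation property: there exist CPC maps $\phi_\alpha\colon B\to\bM_{m(\alpha)}$ and $\psi_\alpha\colon\bM_{m(\alpha)}\to B$ with $\psi_\alpha\circ\phi_\alpha\to\mathrm{id}_B$ pointwise. Following the strategy of \cite[Lemma 10.1]{DH2023}, approximate the entries of $[b_{ij}]\in\bM_n(B)$ by $\psi_\alpha\phi_\alpha(b_{ij})$; the scalar lifts of the finitely many elements $\psi_\alpha(e_{kl})\in B$ can then be assembled coherently into a completely bounded map $\bM_{m(\alpha)}\to A$, of CB norm at most $\kappa_0$, lifting $\pi\circ\psi_\alpha$. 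Post-composing with $\phi_\alpha^{(n)}$ produces a preimage of $[\psi_\alpha\phi_\alpha(b_{ij})]q_n$ in $\bM_n(A)$ of norm at most $\kappa_0\|[b_{ij}]\|$, and passing to a weak-$*$ cluster point in $\bM_n(A^{**})$ followed by a Goldstine pullback into $\bM_n(A)$ yields the required matricial lift with constant arbitrarily close to $\kappa_0$.

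The principal obstacle is the \emph{coherent} construction of the CB lift $\bM_{m(\alpha)}\to A$ with CB norm $\kappa_0$ independent of $m(\alpha)$, as opposed to the $m(\alpha)^2\kappa_0$ bound one would get from independent entrywise lifts; exploiting the complete positivity of $\psi_\alpha$ is crucial here. An equivalent viewpoint is that nuclearity allows one to upgrade Hay's isometry $A/K\cong Aq$ to a complete isometry, thereby identifying $\bM_n(A/K)$ with $\bM_n(B/J)\cong\bM_n(Bq)$, after which the proposition reduces to near-optimal matricial lifts through the quotient map $\bM_n(B)\to\bM_n(B/J)$. Establishing this upgrade rigorously is the heart of the proof.
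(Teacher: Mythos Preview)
Your proposal has a genuine gap, and it is exactly the one you identify yourself: the ``coherent construction of the CB lift $\bM_{m(\alpha)}\to A$ with CB norm at most $\kappa_0$.'' You correctly observe that entrywise lifting of the $\psi_\alpha(e_{kl})$ only gives a bound of order $m(\alpha)^2\kappa_0$, and you gesture at using complete positivity of $\psi_\alpha$ to do better, but no argument is supplied. This is not a detail to be filled in later; it is the entire content of the proposition. Without it, you have only reproved the open mapping theorem at each fixed $n$ with an $n$-dependent constant, which is automatic and does not use nuclearity at all.

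The paper's approach is different and much shorter. Rather than working with the CPAP factorizations directly, it exploits the fact that $q$ is \emph{central}, so that $Bq$ is a $\rC^*$-algebra and the surjection $\pi\colon A\to Bq$, $a\mapsto aq$, is a \emph{homomorphism}. The induced map $A/\ker\pi\to Bq$ is then a bounded algebra isomorphism, and its inverse $\rho\colon Bq\to A/\ker\pi$ is a bounded homomorphism from a nuclear $\rC^*$-algebra (quotients of nuclear $\rC^*$-algebras are nuclear) into an operator algebra. Christensen's theorem \cite[Theorem 4.1]{christensen1981} now gives that $\rho$ is automatically completely bounded, and $\kappa=\|\rho\|_{\mathrm{cb}}$ works. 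The point is that the multiplicativity of the inverse map lets you invoke a deep automatic-complete-boundedness result, sidestepping the coherence problem you encountered entirely. Your equivalent viewpoint at the end---upgrading $A/K\cong Aq$ to a complete isometry---is exactly what Christensen's theorem delivers once you recognize the inverse as a homomorphism.
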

\begin{proof}
Consider the surjective completely contractive  map $\pi:A\to Bq$ defined as $\pi(a)=aq$ for each $a\in A$. Since $q$ is central, $Bq$ is a $\rC^*$-algebra and $\pi$ is a homomorphism. Therefore, there is a bounded homomorphism $\rho:Bq\to A/\ker \pi$ such that $\rho\circ \pi:A\to A/\ker \pi$ is simply the quotient map. Next, since $B$ is nuclear, 
so is $Bq$ \cite[Corollary 9.4.4]{BO2008}.  In addition, because $A/\ker \pi$ is an operator algebra (see \cite[Proposition 2.3.4]{BLM2004}), we may 
apply \cite[Theorem 4.1]{christensen1981} to see that $\rho$ is in fact completely bounded. This immediately implies the existence of $\kappa>0$ satisfying the required property. 
\end{proof}

We note in passing that in view of \cite{LT2012}, one-sided module maps on nuclear $\rC^*$-algebras do not have to be completely bounded.

We can now show how to recover Varopoulos' original theorem from our general machinery. 
 
 \begin{corollary}\label{C:Varopoulos}
 Let $X$ be a compact Hausdorff space and let $A\subset \rC(X)$ be a unital norm-closed subalgebra. Let $K\subset X$ be a subset with  the following properties:
\begin{enumerate}[{\rm (i)}]
\item $K$ is closed,
\item $A\chi_K=\rC(X)\chi_K$ as subspaces in $\rC(X)^{**}$ 
(that is, $K$ is an interpolation set), and 
\item $\chi_{\{x\}}\in A^{\perp\perp}$ for each $x\in K$
(that is, every point in $K$ is an $A$-peak point). 
\end{enumerate}
Then, $\chi_K$ is $A$-null (that is, $K$ is a peak set).
 \end{corollary}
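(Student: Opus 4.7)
The plan is to deduce this from Corollary \ref{C:ncVarcomm} applied to $B=\rC(X)$ and $q=\chi_K$, and the bulk of the work is verifying the three hypotheses of that corollary. The setting is already commutative, which will simplify matters substantially.

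For hypothesis (i), I use that $\chi_K$ is a closed projection in $\rC(X)^{**}$ (as noted in the introduction, closed projections in $\rC(X)^{**}$ are precisely the characteristic functions of closed subsets of $X$). Concretely, the family $\mathcal F$ of continuous functions $f\colon X\to[0,1]$ with $f\geq \chi_K$, directed by reverse pointwise order (with the infimum of finitely many such functions witnessing directedness), is a decreasing net of positive contractions in $\rC(X)$; by Urysohn's lemma its pointwise infimum is $\chi_K$, and dominated convergence against arbitrary regular Borel measures then gives weak-$*$ convergence in $\rC(X)^{**}$. Commutativity of $\rC(X)$ ensures each such $f$ commutes with $A$.

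For hypothesis (ii), observe that $\rC(X)$ is nuclear (being commutative) and $q=\chi_K$ is central in $\rC(X)^{**}$. The interpolation assumption $A\chi_K=\rC(X)\chi_K$ then puts us exactly in the setting of Proposition \ref{P:interpnuclear}, which produces a constant $\kappa>0$ for which $\chi_K$ is a complete $(A,\kappa)$-interpolation projection.

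For hypothesis (iii), I will identify the atomic part of $\chi_K$ explicitly. The minimal projections of the commutative von Neumann algebra $\rC(X)^{**}$ correspond to pure states, which are the point evaluations on $\rC(X)$, so the minimal projections are precisely $\{\chi_{\{x\}}:x\in X\}$. By \eqref{Eq:atomic}, the atomic part of $\chi_K$ is $\bigvee_{x\in K}\chi_{\{x\}}$. Hypothesis (iii) of the corollary places each $\chi_{\{x\}}$ (for $x\in K$) in $A^{\perp\perp}$, and since $A^{\perp\perp}$ is weak-$*$ closed in $\rC(X)^{**}$, the supremum of this family remains in $A^{\perp\perp}$.

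With all three hypotheses verified, Corollary \ref{C:ncVarcomm} yields that $\chi_K$ is $A$-null, which is the desired conclusion. There is no real obstacle here: the result is a pleasant package deal, and the most delicate piece is perhaps recognizing the atomic part as $\bigvee_{x\in K}\chi_{\{x\}}$, but this is immediate from the identification of minimal projections of $\rC(X)^{**}$ with point masses.
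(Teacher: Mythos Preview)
Your proof is correct and follows essentially the same approach as the paper's: both apply Corollary~\ref{C:ncVarcomm} after verifying its three hypotheses via the same tools (Proposition~\ref{P:interpnuclear} for the complete interpolation constant, equation~\eqref{Eq:atomic} for the atomic part). You supply a bit more detail than the paper does, particularly in explicitly constructing the decreasing net of positive contractions for hypothesis~(i) and in justifying why the supremum $\bigvee_{x\in K}\chi_{\{x\}}$ remains in $A^{\perp\perp}$, but the logical structure is identical.
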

 \begin{proof}
Let $q=\chi_K$, which is a closed projection in $\rC(X)^{**}$.  Invoking \eqref{Eq:atomic}, we see that the atomic part of $\chi_K$ is simply $\bigvee_{x\in K}\chi_{\{x\}}$. Given $x\in K$, we know that $\chi_{\{x\}}\in A^{\perp\perp}$. 
Hence, the atomic part of $\chi_K$ lies in $A^{\perp\perp}$. Next, it follows from Proposition \ref{P:interpnuclear} that there is $\kappa>0$ such that $q$ is a complete $(A,\kappa)$-interpolation projection. 
By virtue of Corollary \ref{C:ncVarcomm}, we conclude that $q$ is $A$-null.
 \end{proof}

\section{Application to algebras of analytic functions}\label{S:A(H)}
In this section, we illustrate how our general theorem can be used to recover some recent results pertaining to operator algebras of analytic functions on the unit ball \cite{DH2023}. This will be done through another special case of Theorem \ref{T:ncVar}, which we tackle first.

\subsection{Adapted projections}
Let $B$ be a $\rC^*$-algebra and let $A\subset B$ be a subalgebra. We say that a projection $e\in B^{**}$ is \emph{$A$-adapted} if there is a net $(e_i)$ of closed $A$-coinvariant 
 projections in $B^{**}$ increasing to $e$ in the weak-$*$ topology of $B^{**}$. The linchpin technical result of this section is the following refinement of Theorem \ref{T:ncVar}.

\begin{corollary}\label{C:adapted}
 Let $B$ be a unital $\rC^*$-algebra and let $A\subset B$ be a unital norm-closed subalgebra. Assume that there is a central closed projection $z\in B^{**}$ such that   $B^{**}z$ is commutative and $1-z$ is $A$-adapted. Let $q\in B^{**}$ be a projection with the following properties:
 \begin{enumerate}[{\rm(i)}]
 \item $q$ is closed,
 \item there is $\kappa>0$ for which $q$ is a complete $(A,\kappa)$-interpolation projection,
  \item $q\leq z$, 
 \item the atomic part of $q$ lies in $A^{\perp\perp}$.
\end{enumerate}
Then, $q$ is $A$-null.

\end{corollary}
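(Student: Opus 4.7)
The plan is to deduce Corollary~\ref{C:adapted} from Theorem~\ref{T:ncVar}. Items (i), (ii), and (iv) of the corollary match (i), (ii), (iv) of the theorem, so only hypothesis (iii)---that $q$ commutes with $A$---remains to be verified. Since $z$ is central in $B^{**}$ and $q\leq z$, we have $q=qz\in B^{**}z$; for $a\in A$, centrality gives $az\in B^{**}z$, so by commutativity of $B^{**}z$ we obtain $(az)q=q(az)$. Combined with $q=qz$, this yields $aq=(az)q=q(az)=qa$.

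Theorem~\ref{T:ncVar} then asserts that for every $\phi\in A^\perp$ and every open $A$-invariant projection $u\geq q$,
\[
|\phi|(q)\leq \kappa^3\sqrt{|\phi|(1)\,|\phi|(u-q)}.
\]
To conclude that $q$ is $A$-null, it thus suffices to produce, given $\phi\in A^\perp$ and $\epsilon>0$, such a $u$ with $|\phi|(u-q)<\epsilon$; letting $\epsilon\to 0$ forces $|\phi|(q)=0$.

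The projection $u$ is assembled from pieces adapted to the splitting $1=z+(1-z)$. On the $(1-z)$-half, the $A$-adapted structure provides closed $A$-coinvariant projections $e_i\uparrow 1-z$, so that $1-e_i\downarrow z$ are open and $A$-invariant; pick $i$ with $|\phi|(1-e_i-z)<\epsilon/2$. On the $z$-half, $q$ is closed in the commutative algebra $(Bz)^{**}=B^{**}z$, since $1-q$ being open in $B^{**}$ makes $z-q=(1-q)z$ the weak-$*$ supremum of positive contractions in $Bz$. Akemann's Proposition~II.3 applied inside $Bz$ then yields an open projection $v\in(Bz)^{**}$ with $q\leq v\leq z$ and $|\phi|(v-q)<\epsilon/2$. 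Setting $u=v+(1-e_i-z)$, orthogonality of $v\leq z$ and $1-e_i-z\leq 1-z$ makes $u$ a projection dominating $q$ with $|\phi|(u-q)<\epsilon$. For $A$-invariance: $v\leq z$ commutes with $A$ by the argument already used for $q$, while $1-e_i$ is $A$-invariant by hypothesis; a short expansion exploiting the centrality of $z$ and the orthogonalities $v(1-e_i-z)=0$ and $(1-e_i-z)z=0$ shows that $au=uau$ for each $a\in A$.

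The main technical point---and the hardest part of the plan---is showing that $u$ is open in $B^{**}$. Rewriting $u=(1-e_i)(1-z+v)$ (valid because $(1-e_i)(1-z)=1-e_i-z$ and $(1-e_i)v=v$) and noting that all factors pairwise commute through the centrality of $z$, it suffices to exhibit $u$ as the weak-$*$ supremum of positive contractions in $B$ dominated by $u$. Using positive lifts $b_\beta\in B$ of positive contractions $c_\beta=b_\beta z\in Bz$ approximating $v$, an approximate identity $(f_\mu)$ for the closed ideal $B\cap(1-z)B^{**}$ (whose support is $1-z$, so $f_\mu\uparrow 1-z$), and a net $(k_\gamma)$ of positive contractions in $B$ increasing to $1-e_i$, one forms
\[
k_\gamma^{1/2}\Bigl(f_\mu+(1-f_\mu)^{1/2}\, b_\beta\,(1-f_\mu)^{1/2}\Bigr) k_\gamma^{1/2}\in B.
\]
Using that $f_\mu z=0$ implies $(1-f_\mu)^{1/2}z=z$, and that $k_\gamma$ commutes with both $v$ and $1-z$ (the former by $B^{**}z$ being commutative, the latter by $z$ being central), one checks that each such element is a positive contraction dominated by $u$, and that the weak-$*$ supremum over $\beta$, $\mu$, and $\gamma$ equals $u$. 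Openness of $u$ is thereby established, and Theorem~\ref{T:ncVar} applies.
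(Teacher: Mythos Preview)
Your overall strategy is the same as the paper's: verify that $q$ commutes with $A$, then build an open $A$-invariant projection $u\geq q$ with $|\phi|(u-q)$ arbitrarily small by splicing together a piece above $z$ (coming from Akemann's regularity) with a piece above $1-z$ (coming from the $A$-adapted structure), and finally invoke Theorem~\ref{T:ncVar}. The verification that $q$ commutes with $A$, the choice of $e_i$, and the $A$-invariance check for $u$ are all fine.

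The substantive difference lies in how you obtain the piece above $z$ and how you prove openness of $u$. The paper applies Akemann's Proposition~II.3 directly in $B$ to get an open $u_0\in B^{**}$ with $q\leq u_0$ and $|\phi|(u_0-q)<\eps$, then sets $u'=u_0z+((1-z)-r)$ and shows $1-u'=r+z(1-u_0)$ is \emph{closed}: $z(1-u_0)$ is closed as a product of commuting closed projections (Akemann~II.5), and the sum of orthogonal closed projections is closed in the unital setting (Akemann~II.7). This two-line argument avoids any explicit net construction.

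Your route---applying Akemann inside $(Bz)^{**}$ to get $v$, then exhibiting an explicit approximating net in $B$---also works, but the write-up leaves a gap. The family
\[
k_\gamma^{1/2}\bigl(f_\mu+(1-f_\mu)^{1/2}b_\beta(1-f_\mu)^{1/2}\bigr)k_\gamma^{1/2}
\]
is \emph{not} increasing in $(\gamma,\mu,\beta)$ (the lifts $b_\beta$ need not be ordered, and neither $k_\gamma^{1/2}(\cdot)k_\gamma^{1/2}$ nor $f_\mu+(1-f_\mu)^{1/2}(\cdot)(1-f_\mu)^{1/2}$ is monotone), so ``weak-$*$ supremum equals $u$'' does not by itself give openness. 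What one actually needs is weak-$*$ \emph{convergence} of the net to $u$; this holds, but requires a squeeze: on the $z$-side, $xz=(k_\gamma z)c_\beta$ by commutativity of $B^{**}z$, and this is genuinely increasing to $v$; on the $(1-z)$-side one uses $k_\gamma^{1/2}f_\mu k_\gamma^{1/2}(1-z)\leq x(1-z)\leq 1-z-e_i$ together with the fact that the lower bound converges to $1-z-e_i$. Once weak-$*$ convergence is established, domination by $u$ places the net inside the hereditary subalgebra $uB^{**}u\cap B$, and openness follows. So your argument can be completed, but the paper's closed-complement route is both shorter and more transparent.
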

\begin{proof}
 Let $\eps>0$ and let $\phi\in A^\perp$ with $\|\phi\|=1$. Let $\tau=|\phi|$, which is a state
 on $B$. By \cite[Proposition II.3]{akemann1969}, there is an open projection $u\in B^{**}$ dominating $q$ such that  $\tau(u-q)<\eps.$ Since $z$ is central and $q\leq z$, we find $q=qz\leq uz\leq u$ so that
 \begin{equation}\label{Eq:uq}
 \tau(uz-q)<\eps.
 \end{equation} 
 Further, because $1-z$ is $A$-adapted, there is an $A$-coinvariant closed projection $r\leq 1-z$ such that 
 \begin{equation}\label{Eq:1zr}
 \tau ((1-z)-r)<\eps.
 \end{equation} 
 Because $z$ is closed, so is $z(1-u)$  \cite[Proposition II.5]{akemann1969}. Moreover, we have $r\leq 1-z$ so that $r'=r+z(1-u)$ is closed by \cite[Theorem II.7]{akemann1969}. The projection \[u'=1-r'=((1-z)-r))+uz\]  is then seen to be open with $u'\geq uz\geq q$. 
We compute
 \begin{align*}
 \tau(u'-q)&=\tau(uz-q)+\tau((1-z)-r)\leq 2\eps
 \end{align*}
 by \eqref{Eq:uq} and \eqref{Eq:1zr}. 
 
 Observe next that since $B^{**}z$ is commutative, $q=qz$ is $A$-coinvariant and $uz$ is $A$-invariant.  On the other hand, $1-z$ is central and $r$ is $A$-coinvariant, so that $(1-z)-r$ is $A$-invariant. It follows that $u'$ is  $A$-invariant as well.  By virtue of Theorem \ref{T:ncVar}, we see that $$\tau(q)\leq \kappa^3\sqrt{\tau(u'-q)}\leq \kappa^3\sqrt{2\eps},$$ so the proof is complete upon letting $\eps\to 0$.
\end{proof}

\subsection{Hilbert function spaces and their multipliers}\label{SS:HFS}

Our goal is to show how Corollary \ref{C:adapted} can be used to recover \cite[Theorem 10.3]{DH2023}. 
Along the way, we will also recover a few more results from \cite{DH2023}.
Let us recall some background material and terminology; for more detail the reader may consult \cite{DH2023}
or e.g.\ \cite[Section 5]{CT2023Henkin}.

Let $d\geq 1$ be a positive integer and let $\bB_d\subset \bC^d$ denote the open unit ball. Let $\bS_d=\partial \bB_d$ denote the unit sphere. Let $\H$ be a Hilbert space of analytic functions on $\bB_d$ admitting  a reproducing kernel that is unitarily invariant and regular. Examples include the Dirichlet space on the disc, the Hardy and Bergman spaces, as well as the Drury--Arveson space. For each $\lambda\in \bB_d$, we let $k_\lambda\in \H$ denote the corresponding reproducing kernel vector. We then have that $\{k_\lambda:\lambda\in \bB_d\}$ spans a norm-dense subspace in $\H$ (see \cite{GHX04}).

A multiplier is a function $\phi:\bB_d\to\bC$ satisfying $\phi f\in \H$ for every $f\in \H$. The corresponding multiplication operator $M_\phi:\H\to\H$ is then bounded. The space of all multiplication operators induced by a multiplier is denoted by $\M(\H)$, and it is weak-$*$ closed subalgebra of $B(\H)$. If $\phi$ is a multiplier, then
\begin{equation}\label{Eq:evector}
M_\phi^* k_\lambda=\ol{\phi(\lambda)}k_\lambda, \quad  \lambda\in \bB_d.
\end{equation}
All polynomials are multipliers, and we let $\A(\H)\subset \M(\H)$ denote the norm-closure of these polynomial multipliers. A standard argument 
shows that if $M_\phi\in \A(\H)$, then $\phi$ is continuous on $\ol{\bB_d}$. 
 Indeed, for any multiplier $\phi$,  (\ref{Eq:evector}) shows that $\| \phi \|_\infty \leq \| M_\phi \|$; hence functions inducing the multiplication operators in $\A(\H)$ are uniform limits of polynomials on $\ol{\bB_d}$.

Denote by  $\T(\H)$ the $\rC^*$-subalgebra of $B(\H)$ generated by$\A(\H)$. It follows from \cite[Theorem 4.6]{GHX04} that $\T(\H)$ contains the ideal $K$ of compact operators  on $\H$. Let $z_K\in \T(\H)^{**}$ denote the closed central projection satisfying $\T(\H)^{**}(1-z_K)=K^{\perp\perp}$. This projection has the following crucial property.

\begin{lemma}\label{L:adapted}
The projection $1-z_K$ is $\A(\H)$-adapted.
\end{lemma}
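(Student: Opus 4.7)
My plan is to exhibit an explicit increasing net of closed $\mathcal{A}(\mathcal{H})$-coinvariant projections in $\mathcal{T}(\mathcal{H})^{**}$ whose weak-$*$ supremum equals $1-z_K$. The candidates are the finite-rank projections onto finite-dimensional spans of reproducing kernels, which are both closed (being finite-rank elements of $K \subset \mathcal{T}(\mathcal{H})$) and $\mathcal{A}(\mathcal{H})$-coinvariant (because each $k_\lambda$ is a joint eigenvector for the adjoints of the multipliers).

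First, for a finite set $F \subset \mathbb{B}_d$, let $P_F \in B(\mathcal{H})$ denote the orthogonal projection onto $\operatorname{span}\{k_\lambda : \lambda \in F\}$. Each $P_F$ is finite-rank, hence lies in the ideal $K \subset \mathcal{T}(\mathcal{H})$, and is therefore a closed projection in $\mathcal{T}(\mathcal{H})^{**}$. The net $(P_F)$, indexed by finite subsets of $\mathbb{B}_d$ directed by inclusion, is increasing. By \eqref{Eq:evector}, every $k_\lambda$ is an eigenvector for $M_p^*$ for each polynomial $p$, so $\operatorname{ran}(P_F)$ is invariant under $M_p^*$ for every polynomial $p$. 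Since polynomial multipliers are norm-dense in $\mathcal{A}(\mathcal{H})$, this invariance passes by continuity to all $a \in \mathcal{A}(\mathcal{H})$. In other words, $1 - P_F$ is $\mathcal{A}(\mathcal{H})$-invariant, hence $P_F$ is $\mathcal{A}(\mathcal{H})$-coinvariant.

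Second, I need to identify the weak-$*$ supremum of $(P_F)$ with $1 - z_K$. The $*$-homomorphism $\pi : \mathcal{T}(\mathcal{H})^{**} \to B(\mathcal{H})$ arising as the weak-$*$ continuous extension of the identity representation is faithful on $K$, and its restriction to $K^{\perp\perp} = \mathcal{T}(\mathcal{H})^{**}(1-z_K)$ is a weak-$*$ homeomorphic $*$-isomorphism onto $B(\mathcal{H})$ (using the standard fact $K^{**} \cong B(\mathcal{H})$ when $K = \mathcal{K}(\mathcal{H})$), sending $1 - z_K$ to $I_\mathcal{H}$ and each $P_F$ to itself. Since $\{k_\lambda : \lambda \in \mathbb{B}_d\}$ spans a dense subspace of $\mathcal{H}$, the net $(P_F)$ converges to $I_\mathcal{H}$ in the $\sigma$-weak topology of $B(\mathcal{H})$, and hence to $1 - z_K$ in the weak-$*$ topology of $\mathcal{T}(\mathcal{H})^{**}$.

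Combining these observations, $(P_F)$ is an increasing net of closed $\mathcal{A}(\mathcal{H})$-coinvariant projections in $\mathcal{T}(\mathcal{H})^{**}$ with weak-$*$ limit equal to $1 - z_K$, which is the definition of being $\mathcal{A}(\mathcal{H})$-adapted. The main conceptual point to handle carefully is the identification $K^{\perp\perp} \cong B(\mathcal{H})$ together with the correspondence $1-z_K \leftrightarrow I_\mathcal{H}$; after that, coinvariance and convergence are immediate from the joint eigenvector property of the kernels and their density in $\mathcal{H}$.
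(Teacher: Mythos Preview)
Your proposal is correct and follows essentially the same approach as the paper: both use the net of finite-rank projections $P_F$ onto spans of reproducing kernels, note they are closed (being in $K\subset\T(\H)$) and $\A(\H)$-coinvariant via \eqref{Eq:evector}, and then use the weak-$*$ homeomorphic $*$-isomorphism $\T(\H)^{**}(1-z_K)\cong B(\H)$ to transfer the convergence $P_F\to I_\H$ in $B(\H)$ to $P_F\to 1-z_K$ in $\T(\H)^{**}$. The only cosmetic difference is that the paper cites \cite[Lemma 5.5]{CT2023Henkin} for this isomorphism, whereas you describe it directly as the restriction of the normal extension of the identity representation.
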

\begin{proof}
For each finite subset $F\subset\bB_d$, let $e_F\in B(\H)$ denote the projection onto the subspace spanned by $\{k_\lambda:\lambda\in F\}$. Since the set $\{k_\lambda:\lambda\in \bB_d\}$ spans a norm-dense subset of $\H$, the net $(e_F)$ increases to the identity on $\H$ in the weak-$*$ topology of $B(\H)$. Further, each $e_F$ is compact on $\H$,  and it is $\A(\H)$-coinvariant by virtue of \eqref{Eq:evector}. Next, by \cite[Lemma 5.5]{CT2023Henkin}, there is a weak-$*$ homeomorphic $*$-isomorphism $\Omega: \T(\H)^{**}(1-z_K)\to B(\H)$ such that $\Omega(t(1-z_K))=t$ for every $t\in \T(\H)$. Since $e_F$ is compact, it lies in $\T(\H)$ and we see that $\Omega^{-1}(e_F)=e_F(1-z_K)=e_F$. It follows that $(\Omega^{-1}(e_F))$ is an increasing net of $\A(\H)$-coinvariant closed projections in $\T(\H)^{**}$ increasing to $1-z_K$ in the weak-$*$ topology of $\T(\H)^{**}$. 
\end{proof}

Let $\gamma:\T(\H)\to \T(\H)/K$ denote the quotient map.  There is a natural $*$-isomorphism $\tau:\T(\H)/K\to \rC(\bS_d)$ such that
\[
(\tau\circ\gamma)(M_\phi)=\phi,\quad M_\phi\in \A(\H).
\]
See e.g.\ \cite{GHX04}. In particular, $\T(\H)$ is nuclear \cite[Exercise 3.8.1]{BO2008}. The restriction of $(\tau\circ \gamma)^{**}$ gives a weak-$*$ homeomorphic $*$-isomorphism $\Theta: \T(\H)^{**}z_K\to  \rC(\bS_d)^{**}$. We now give a description of the closed projections in $\T(\H)^{**}$ dominated by $z_K$.

\begin{lemma}\label{L:closedsetproj}
Let $q\in \T(\H)^{**}$ be a projection. Then, the following statements hold.
\begin{enumerate}[{\rm (i)}]
\item 
 If $q$ is closed, then there is a closed subset $E\subset \bS_d$ such that $(\tau\circ \gamma)^{**}(q)=\chi_E$.
\item Assume that $q\leq z_K$. If  there is a closed subset $E\subset \bS_d$ such that $\Theta(q)=\chi_E$, then $q$ is closed.
\end{enumerate}
\end{lemma}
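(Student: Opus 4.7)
For (i), the strategy is to transport an approximating net through the $*$-homomorphism $\tau\circ\gamma$. Set $\pi = (\tau\circ\gamma)^{**}\colon \T(\H)^{**}\to \rC(\bS_d)^{**}$, a weak-$*$ continuous $*$-homomorphism. Since $q$ is closed, $1-q$ is open, hence the weak-$*$ supremum of an increasing bounded net $(a_\alpha)$ of positive contractions in $\T(\H)$. Then $(\pi(a_\alpha))$ is an increasing bounded net of positive contractions in $\rC(\bS_d)$ with weak-$*$ limit $1-\pi(q)$, so $1-\pi(q)$ is open in $\rC(\bS_d)^{**}$. Therefore $\pi(q)$ is a closed projection in $\rC(\bS_d)^{**}$, and the commutative theory recalled in the introduction yields a unique closed subset $E\subset \bS_d$ with $\pi(q)=\chi_E$.

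For (ii), I would realize $1-q$ as the support projection of a closed two-sided ideal of $\T(\H)$. Let
\[
I_E = \{f\in \rC(\bS_d) : f|_E=0\} \qand D = (\tau\circ\gamma)^{-1}(I_E) \subset \T(\H).
\]
Here $I_E$ is a closed two-sided ideal of $\rC(\bS_d)$ with support projection $\chi_{\bS_d\setminus E}$ in $\rC(\bS_d)^{**}$. Because $\tau\circ\gamma$ is a surjective $*$-homomorphism, $D$ is a closed two-sided ideal of $\T(\H)$ containing the kernel $K$. Its support projection $s(D)\in \T(\H)^{**}$ is thus an open central projection, and the goal is to show that $s(D)=1-q$.

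Let $(d_\alpha)$ be an increasing positive approximate identity of $D$, so $d_\alpha \nearrow s(D)$ weak-$*$. Since $(\tau\circ\gamma)(D)=I_E$, the positive contractions $(\tau\circ\gamma)(d_\alpha)$ form an approximate identity of $I_E$ and therefore converge weak-$*$ to $\chi_{\bS_d\setminus E}$; weak-$*$ continuity of $\pi$ then gives $\pi(s(D))=\chi_{\bS_d\setminus E}$. Because $\ker \pi = (1-z_K)\T(\H)^{**}$, the restriction of $\pi$ to $z_K\T(\H)^{**}$ coincides with $\Theta$, and hence
\[
\Theta(z_Ks(D))=\chi_{\bS_d\setminus E}=\Theta(z_K-q),
\]
which yields $z_K s(D) = z_K-q$ by injectivity of $\Theta$. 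Moreover $K\subset D$ forces $s(D)\geq s(K)=1-z_K$, so $(1-z_K)s(D)=1-z_K$. Adding the two summands and using the centrality of $z_K$ gives $s(D)=(1-z_K)+(z_K-q)=1-q$. Therefore $1-q$ is an open projection in $\T(\H)^{**}$ and $q$ is closed, as required.

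The main technical point will be the identification of $\pi(s(D))$, which rests on the standard fact that a surjective $*$-homomorphism carries an approximate identity of a closed ideal to an approximate identity of its image in the codomain; everything else is bookkeeping on the orthogonal decomposition $\T(\H)^{**}=(1-z_K)\T(\H)^{**}\oplus z_K\T(\H)^{**}$ afforded by the central projection $z_K$.
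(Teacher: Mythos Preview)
Your proof is correct and follows essentially the same approach as the paper. For (i) you push a monotone net through the weak-$*$ continuous $*$-homomorphism $(\tau\circ\gamma)^{**}$, just as the paper does (you work with the open complement, the paper with the closed projection directly). For (ii), your ideal $D$ is exactly the paper's $\ker(\pi\circ\tau\circ\gamma)$, and your computation of $s(D)$ via the $z_K$-decomposition is a more explicit unpacking of the paper's one-line identity $\ker(\pi\circ\tau\circ\gamma)^{\perp\perp}=\T(\H)^{**}(1-z_K)+\T(\H)^{**}(z_K-q)=\T(\H)^{**}(1-q)$.
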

\begin{proof}
(i) Let $(t_i)$ be a net of positive contractions in $\T(\H)$ decreasing to $q$ in the weak-$*$ topology of $\T(\H)^{**}$. Since $\Theta$ is a weak-$*$ continuous $*$-homomorphism 
with $\Theta(z_K) = 1$,  
we have that $\Theta(t_i z_K) = (\tau\circ \gamma)^{**}(t_i) =( \tau\circ \gamma)(t_i) \in \rC(\bS_d)$.
It follows that $(\Theta(t_i z_K) )$ is a  net of positive contractions in $\rC(\bS_d)$ decreasing to $\Theta(q)$ in the weak-$*$ topology of $\rC(\bS_d)^{**}$. In other words, $\Theta(q)$ is closed in $\rC(\bS_d)^{**}$, so  there is a closed subset $E\subset \bS_d$ such that $\Theta(q)=\chi_E$.

(ii) Since $1-\chi_E$ is open in $\rC(\bS_d)^{**}$, there is a closed two-sided ideal $J\subset \rC(\bS_d)$ such that $J^{\perp\perp}=\rC(\bS_d)^{**}(1-\chi_E)$. Let $\pi:\rC(\bS_d)\to \rC(\bS_d)/J$ denote the corresponding quotient map. 
Then,
\begin{align*}
\ker(\pi\circ \tau\circ \gamma)^{\perp\perp}&=\ker(\pi\circ \tau\circ \gamma)^{**}=\T(\H)^{**}(1-z_K)+\T(\H)^{**}(z_K-q)\\
&=\T(\H)^{**}(1-q).
\end{align*}
We conclude that $q$  is closed in $\T(\H)^{**}$. 
\end{proof}

A closed set $E\subset \bS_d$ is an \emph{$\A(\H)$-interpolation set} if 
\[
\chi_E \, \{\phi:M_\phi\in \A(\H)\} =\chi_E \, \rC(\bS_d) 
\]
as subsets of $\rC(\bS_d)^{**}$. This is related to our notion of interpolation projection in $\T(\H)^{**}$, as follows.

\begin{lemma}\label{L:interpset}
Let $q\in \T(\H)^{**}$ be a closed projection dominated by $z_K$. Then, the following statements are equivalent.
\begin{enumerate}[{\rm (i)}]
\item 
$q\A(\H)=q\T(\H)$.
\item There is a closed $\A(\H)$-interpolation subset $E\subset \bS_d$ such that $\Theta(q)=\chi_E$.
\end{enumerate}
\end{lemma}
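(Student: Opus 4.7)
The plan is to translate the identity $q\A(\H)=q\T(\H)$ across the $\ast$-isomorphism $\Theta$, and recognize the resulting statement as exactly the definition of an $\A(\H)$-interpolation set. The isomorphism $\Theta$ is multiplicative and weak-$\ast$ continuous, so it will convert products of the projection with elements of subalgebras into products of $\chi_E$ with the images of those subalgebras.

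Concretely, first I would use Lemma \ref{L:closedsetproj}(i) to extract the closed set $E\subset\bS_d$ with $\Theta(q)=\chi_E$; this gives the set that appears in (ii). Next, since $z_K$ is central and $q\leq z_K$, for every $t\in\T(\H)$ we have $qt=q\,(tz_K)$ inside $\T(\H)^{**}z_K$, so
\[
\Theta(qt)=\Theta(q)\Theta(tz_K)=\chi_E\,(\tau\circ\gamma)(t).
\]
As $t$ ranges over $\T(\H)$ the right-hand side ranges over $\chi_E\,\rC(\bS_d)$ because $\tau\circ\gamma$ is surjective; specializing to $t=M_\phi\in\A(\H)$ shows that as $M_\phi$ ranges over $\A(\H)$ the right-hand side ranges over $\chi_E\,\{\phi:M_\phi\in\A(\H)\}$, since $(\tau\circ\gamma)(M_\phi)=\phi$ by definition. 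Thus
\[
\Theta(q\A(\H))=\chi_E\,\{\phi:M_\phi\in\A(\H)\}\qquad\text{and}\qquad \Theta(q\T(\H))=\chi_E\,\rC(\bS_d).
\]

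Because $\Theta$ is a bijection onto $\rC(\bS_d)^{**}$, the equality $q\A(\H)=q\T(\H)$ is equivalent to $\chi_E\,\{\phi:M_\phi\in\A(\H)\}=\chi_E\,\rC(\bS_d)$, which is precisely the definition of $E$ being an $\A(\H)$-interpolation set. This establishes both implications in one stroke.

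The only subtlety I anticipate is being careful that the set-theoretic equalities above really are just pushforwards under $\Theta$, as opposed to requiring closure considerations (which could be an issue since interpolation is stated as equality of subspaces, not of their norm-closures). This is harmless here: $\Theta$ is an isometric $\ast$-isomorphism, so it sends subsets bijectively, and multiplication by $\Theta(q)=\chi_E$ on the image side matches multiplication by $q$ on the domain side. No appeal to Hay's theorem on closedness of $qA$ is required, though one could invoke it to note that both sides are in fact norm-closed.
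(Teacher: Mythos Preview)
Your proposal is correct and follows essentially the same approach as the paper: invoke Lemma \ref{L:closedsetproj} to produce the closed set $E$ with $\Theta(q)=\chi_E$, then push the equality $q\A(\H)=q\T(\H)$ through the $*$-isomorphism $\Theta$ using $\Theta(M_\phi z_K)=\phi$ and the surjectivity of $\tau\circ\gamma$ to obtain the defining equality for an $\A(\H)$-interpolation set. Your version spells out the computation $\Theta(qt)=\chi_E\,(\tau\circ\gamma)(t)$ more explicitly than the paper does, but the argument is the same.
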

\begin{proof}
By Lemma \ref{L:closedsetproj}, there is a closed  subset $E\subset \bS_d$ such that $\Theta(q)=\chi_E$.
Since $\Theta$ is a $*$-isomorphism such that
\[
\Theta(M_\phi z_K)=\phi, \quad M_\phi\in \A(\H),
\]
we infer that $q\A(\H)=q\T(\H)$ is equivalent to $\Theta(q\A(\H))=\Theta(q\T(\H))$, or
\[
\chi_E \, \{\phi:M_\phi\in \A(\H)\} =\chi_E \, \rC(\bS_d) .\qedhere
\] 
\end{proof}

We remark here that for a projection $q\leq z_K$, the equality in statement (i) above is equivalent to $\A(\H)q=\T(\H)q$. Indeed, in this case $q$ is central since $\T(\H)^{**}z_K\cong \rC(\bS_d)^{**}$.

A bounded linear functional $\phi:\A(\H)\to\bC$ will be said to be \emph{$\M(\H)$-Henkin} if $\phi$ extends weak-$*$ continuously to $\M(\H)$. 
We note here that this coincides with the notion used in \cite{CT2023Henkin} 
by Equation (6) above Lemma 5.6 there
(or  \cite[Lemma 2.1]{DH2023}). This will be used implicitly below, whenever results from \cite{CT2023Henkin} regarding Henkin functionals are invoked.

A regular Borel measure $\mu$ on $\bS_d$ is said to be $\M(\H)$-Henkin if $I_\mu\circ \tau\circ \gamma|_{\A(\H)}$ is $\M(\H)$--Henkin in the previous sense, where $I_\mu:\rC(\bS_d)\to\bC$ is the integration functional defined as
\[
I_\mu(f)= \int_{\bS_d}fd\mu, \quad f\in \rC(\bS_d).
\]

A closed projection $q\in \T(\H)^{**}$ is \emph{$\M(\H)$-totally null} if $|\phi|(q)=0$ for every functional $\phi\in \T(\H)^*$ such that $\phi|_{\A(\H)}$ is $\M(\H)$-Henkin. A Borel subset $E\subset \bS_d$ is said to be $\M(\H)$-totally null if $|\mu|(E)=0$ for every
 $\M(\H)$-Henkin measure. We clarify the relation between these notions in the next result. In what follows, we will repeatedly use the fact that $\phi(\cdot (1-z_K))|_{\A(\H)}$ is an $\M(\H)$-Henkin functional for every $\phi\in \T(\H)^*$ (see \cite[Lemma 5.5]{CT2023Henkin}).

\begin{theorem}\label{T:nullAH} Let $\H$ be a Hilbert space of analytic functions on $\bB_d$ admitting  a reproducing kernel that is unitarily invariant and regular. 
Let $q\in \T(\H)^{**}$ be a projection. Then, the following statements are equivalent.
\begin{enumerate}[{\rm (i)}]
\item The projection $q$ is closed and  $\A(\H)$-null.
\item The projection $q$ is closed and $\M(\H)$-totally null.
\item The projection $q$ is dominated by $z_K$, and there is a closed $\M(\H)$-totally null subset $E\subset \bS_d$ such that $\Theta(q)=\chi_E$.\
\end{enumerate}
\end{theorem}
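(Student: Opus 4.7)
The strategy is to pivot around the $*$-isomorphism $\Theta : \T(\H)^{**}z_K \to \rC(\bS_d)^{**}$, translating non-commutative nullity into measure-theoretic statements on $\bS_d$. The implication (ii)$\Rightarrow$(i) is immediate: any $\phi \in \A(\H)^\perp$ has $\phi|_{\A(\H)} = 0$, which trivially extends weak-$*$ continuously to $\M(\H)$, so $\M(\H)$-totally null is formally stronger than $\A(\H)$-null.

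For (ii)$\Rightarrow$(iii), I would first observe that every vector state $\omega_y(x) = \langle xy, y \rangle$ on $\T(\H)$ is $\M(\H)$-Henkin, being the restriction of a normal state on $B(\H) \supset \M(\H)$. Thus $\omega_y(q) = 0$ for every $y \in \H$, and via the isomorphism $\Omega : \T(\H)^{**}(1-z_K) \to B(\H)$ used in the proof of Lemma \ref{L:adapted}, this forces $q(1-z_K) = 0$, i.e., $q \leq z_K$. Lemma \ref{L:closedsetproj}(i) then yields closed $E \subset \bS_d$ with $\Theta(q) = \chi_E$. To confirm that $E$ is $\M(\H)$-totally null, for every $\M(\H)$-Henkin measure $\mu$ on $\bS_d$ I would form $\phi_\mu(x) := \int \tau(\gamma(x))\, d\mu$; its restriction to $\A(\H)$ is precisely the Henkin functional induced by $\mu$, so $|\phi_\mu|(q) = 0$ reads through $\Theta$ as $|\mu|(E) = 0$. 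For (iii)$\Rightarrow$(ii), closedness of $q$ is Lemma \ref{L:closedsetproj}(ii); given any $\phi \in \T(\H)^*$ with $\phi|_{\A(\H)}$ Henkin, I would split $\phi = \phi(\cdot\, z_K) + \phi(\cdot\, (1-z_K))$, noting that the second summand restricts to an $\M(\H)$-Henkin functional on $\A(\H)$ by \cite[Lemma 5.5]{CT2023Henkin}, so the first does as well and corresponds under $\Theta$ to an $\M(\H)$-Henkin measure $\mu$. Centrality of $z_K$ together with $q \leq z_K$ then gives $|\phi|(q) = |\phi(\cdot\, z_K)|(q) = |\mu|(E) = 0$.

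The substantive content is (i)$\Rightarrow$(iii). Applying the Basic Principle from the introduction, $q \in \A(\H)^{\perp\perp}$ and $q\A(\H) = q\T(\H)$. To prove $q \leq z_K$, I would consider the weak-$*$ continuous $*$-homomorphism $\pi_K : \T(\H)^{**} \to B(\H)$ defined by $t \mapsto \Omega(t(1-z_K))$. Since $\pi_K(a) = a$ for $a \in \T(\H)$, and the weak-$*$ closure of $\A(\H)$ in $B(\H)$ equals $\M(\H)$, we get $\pi_K(q) \in \M(\H)$. But $\M(\H)$ consists of multipliers on the connected domain $\bB_d$, hence contains no non-trivial idempotents; so $\pi_K(q) \in \{0, I\}$. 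If $\pi_K(q) = I$, then applying $\pi_K$ to $q\A(\H) = q\T(\H)$ would force $\A(\H) = \T(\H)$ inside $B(\H)$, contradicting $K \subset \T(\H)$ together with $K \cap \A(\H) = \{0\}$. Therefore $\pi_K(q) = 0$, so $q \leq z_K$, and Lemma \ref{L:closedsetproj}(i) produces closed $E$ with $\Theta(q) = \chi_E$. Finally, for every $\nu \in M(\bS_d)$ annihilating the boundary algebra $\tau(\gamma(\A(\H)))$, the functional $x \mapsto \int \tau(\gamma(x))\, d\nu$ lies in $\A(\H)^\perp$, so $|\nu|(E) = 0$; the conclusion that $E$ is $\M(\H)$-totally null then follows by invoking the F.\&M.\ Riesz-type identification from \cite{CT2023Henkin} between measures annihilating the boundary algebra and absolutely continuous measures with respect to $\M(\H)$-Henkin measures.

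The main obstacle lies in the last step: passing from ``$|\nu|(E) = 0$ for every $\nu$ annihilating the boundary algebra $\tau(\gamma(\A(\H)))$'' to ``$E$ is $\M(\H)$-totally null'' is an honest F.\&M.\ Riesz phenomenon depending on the detailed structure of $\M(\H)$-Henkin functionals developed in \cite{CT2023Henkin}, rather than a formal translation; securing it under the general regularity assumptions on the kernel is where the work lies.
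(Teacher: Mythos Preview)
Your treatment of (ii)$\Rightarrow$(i) and (ii)$\Leftrightarrow$(iii) is essentially correct and parallels the paper's, up to technical details the paper makes explicit (e.g., the identity $|\phi_\mu|(q)=|\mu|(E)$ needs \cite[Lemma~2.3]{CT2023Henkin} together with a dominated-convergence argument to pass from the bidual evaluation to the measure of $E$).

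The divergence is in how (i) is linked to the rest. The paper does \emph{not} attempt (i)$\Rightarrow$(iii) directly; it obtains (i)$\Leftrightarrow$(ii) in one stroke by citing \cite[Theorem~6.2]{CT2023Henkin}, whose applicability to the triple $\A(\H)\subset\T(\H)\subset B(\H)$ is secured by \cite[Lemma~5.6 and Eq.~(6)]{CT2023Henkin}. That theorem is precisely the abstract statement that closed $\A(\H)$-null projections and closed $\M(\H)$-totally null projections coincide, so all of the ``F.\&M.\ Riesz phenomenon'' you flag is already packaged inside it.

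Your direct attack on (i)$\Rightarrow$(iii) is interesting---the argument that $\pi_K(q)\in\{0,I\}$ via connectedness of $\bB_d$, with $\pi_K(q)=I$ ruled out using $q\A(\H)=q\T(\H)$, is valid and in fact anticipates the idea of Lemma~\ref{L:qz} later in the paper. But the final step is a genuine gap, and not merely a missing citation. You have $|\nu|(E)=0$ for every $\nu$ annihilating the boundary algebra $\tau(\gamma(\A(\H)))$, and you want $|\mu|(E)=0$ for every $\M(\H)$-Henkin $\mu$. The ``identification'' you invoke---that boundary-annihilating measures coincide with those absolutely continuous with respect to Henkin measures---is not a result stated in \cite{CT2023Henkin}, and making such a statement precise and proving it amounts to exactly the content of (i)$\Leftrightarrow$(ii). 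So your route does not circumvent \cite[Theorem~6.2]{CT2023Henkin}; it reproduces the need for it in a different guise. The clean fix is simply to cite that theorem for (i)$\Leftrightarrow$(ii), as the paper does, and keep your (ii)$\Leftrightarrow$(iii).
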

\begin{proof}
(i) $\Leftrightarrow$ (ii): By
Lemma 5.6 in  \cite{CT2023Henkin} and Equation (6) above it, we see that \cite[Theorem 6.2]{CT2023Henkin} may be applied to the triple $\A(\H)\subset \T(\H)\subset B(\H)$, which in turn gives the desired equivalence.

(ii) $\Rightarrow$ (iii): Let $\phi\in \T(\H)$ be a state. Put $\psi=\phi(\cdot (1-z_K))$, which is positive since $z_K$ is central. Using that $\psi|_{\A(\H)}$ is an $\M(\H)$-Henkin functional, we find $\psi(q)=0$, or $\phi(q(1-z_K))=0$. Since $\phi$ was arbitrary, we infer that $q(1-z_K)=0$ or $q\leq z_K$. 

By Lemma \ref{L:closedsetproj}, there is a closed set $E\subset\bS_d$ such that $\chi_E=\Theta(q)$. We claim that $E$ is $\M(\H)$-totally null. To see this, let $\mu$ be an $\M(\H)$-Henkin measure.  Then, $|I_\mu\circ \tau\circ \gamma|(q)=0$.  By \cite[Lemma 2.3]{CT2023Henkin} we find $|I_\mu\circ \tau\circ \gamma|=|I_\mu| \circ \tau^{**}\circ \gamma^{**}$ so
\begin{align*}
|\mu|(E)&=|I_\mu|(\chi_E)=(|I_\mu| \circ \Theta)(q)=(|I_\mu| \circ \tau^{**}\circ \gamma^{**})(q)=|I_\mu\circ \tau\circ \gamma|(q)=0
\end{align*}
which establishes the claim.

(iii) $\Rightarrow$ (ii): The fact that $q$ is closed follows from Lemma \ref{L:closedsetproj}. Let $\phi\in \T(\H)^*$ be such that $\phi|_{\A(\H)}$ is an $\M(\H)$-Henkin functional. The goal is to show that $|\phi|(q)=0$. By assumption, we have $q\leq z_K$, so it is equivalent to show that $|\phi|(qz_K)=0$. Using \cite[Theorem 5.7]{CT2023Henkin}, it follows that the restriction to $\A(\H)$ of $\psi=|\phi|(\cdot z_K)$ is also an $\M(\H)$-Henkin functional. The functional $\psi \circ \Theta^{-1}$ is bounded and positive on $\rC(\bS_d)$, so there  is a regular positive Borel measure $\mu$ on $\bS_d$ such that $I_\mu=(\psi\circ \Theta^{-1})|_{\rC(\bS_d)}$. By construction, $\mu$ is an $\M(\H)$-Henkin measure, so that $\mu(E)=0$.

Since $q$ is closed and $\T(\H)$ is 
finitely generated by the monomials, hence is separable, we may choose  a sequence of positive contractions $(t_n)$ in $\T(\H)$ decreasing to $q$ in the weak-$*$ topology of $\T(\H)^{**}$. In particular, we see that the bounded sequence of continuous functions $((\tau\circ \gamma)(t_n))$ converges pointwise on $\bS_d$ to $\chi_E$. By dominated convergence and using that 
$(\tau \circ \nu)^{**}(z_K) = 1$, we then find
\begin{align*}
\psi (q)&=\lim_{n\to\infty} \psi(t_n z_K)=
\lim_{n\to\infty}(\psi \circ \Theta^{-1})( (\tau\circ \gamma)(t_n)))\\
&=\lim_{n\to\infty}\int_{\bS_d}(\tau\circ \gamma)(t_n)d\mu=\int_{\bS_d}\chi_E d\mu=\mu(E)=0.
\end{align*}
We conclude that $q$ is indeed $\M(\H)$-totally null.
\end{proof}

\subsection{The Davidson--Hartz theorems}
The previous result has bearing on the topic of peak-interpolation. Indeed, it allows us to provide alternative proofs (and even some refinements) of some of the results from \cite{DH2023} via the noncommutative peak-interpolation theory surveyed in \cite{blecher 2013}. For the most part, the statement below is a commutative one. That our approach takes us through the abstract noncommutative theory is a novel feature that we feel has potential for further future applications.


%

\begin{theorem} \label{T:DHbish} Let $\H$ be a Hilbert space of analytic functions on $\bB_d$ admitting  a reproducing kernel that is unitarily invariant and regular.  Let $E\subset \bS_d$ be a closed $\M(\H)$-totally null set. Then, for every non-zero function $g\in \rC(E)$ and every strictly positive function $k\in \rC(\bS_d)$ such that $|g|\leq k$ on $E$, there is $M_\phi\in \A(\H)$ with the following properties:
\begin{enumerate}[{\rm (i)}]
\item $\|M_\phi\|=\|g\|$,
\item $\phi|_E=g$,
\item $|\phi|\leq k$ on $\bS_d$,
and
\item $|\phi|<\|g\|$ on $\bB_d$.
\end{enumerate}
\end{theorem}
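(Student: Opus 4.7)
The strategy is to translate the peak-interpolation problem on $\bS_d$ into a noncommutative peak-interpolation problem for an $\A(\H)$-null projection inside $\T(\H)^{**}$, solve it using the Bishop-type refinement of the noncommutative peak-interpolation machinery surveyed in \cite{blecher2013}, and then descend via the $*$-isomorphism $\Theta$ to obtain the desired multiplier $\phi$.

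Concretely, by Theorem \ref{T:nullAH} the closed $\M(\H)$-totally null set $E$ corresponds to a closed $\A(\H)$-null projection $q\in\T(\H)^{**}$ with $q\leq z_K$ and $\Theta(q)=\chi_E$, and the Basic principle then also gives $q\A(\H)=q\T(\H)$. Extend $g$ by Tietze to $\tilde g\in\rC(\bS_d)$ and replace it by the continuous truncation $\tilde g\cdot\min(1,k/|\tilde g|)$ (extended by $0$ where $\tilde g=0$), so that $|\tilde g|\leq k$ on $\bS_d$, $\tilde g|_E=g$, and $\|\tilde g\|_\infty=\|g\|$. Since $\tau\circ\gamma:\T(\H)\to\rC(\bS_d)$ is a surjective $*$-homomorphism, lift $\tilde g$ to some $b\in\T(\H)$ and $k$ to some positive $K_0\in\T(\H)$ (adjusting by compact perturbations if necessary to ensure the global operator inequality required below).

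Since $q$ is closed and $\A(\H)$-null, the Bishop form of the noncommutative peak-interpolation theorem (from \cite{BR2011, BN2012} and \cite[Sections 5--6]{blecher2013}) produces $M_\phi\in\A(\H)$ with $M_\phi q=bq$, $\|M_\phi\|=\|bq\|=\|g\|$, $M_\phi^*M_\phi\leq K_0^2$, and the strict peaking property $\|M_\phi p\|<\|M_\phi\|$ for every closed projection $p\in\T(\H)^{**}$ orthogonal to $q$. Setting $\phi:=(\tau\circ\gamma)(M_\phi)$, the first three conclusions follow at once: (i) is immediate; applying $\Theta$ to $M_\phi q=bq$ gives $\phi\chi_E=\tilde g\chi_E=g$ in $\rC(\bS_d)^{**}$, which is (ii); and applying the order-preserving $\tau\circ\gamma$ to $M_\phi^*M_\phi\leq K_0^2$ gives $|\phi|^2\leq k^2$ on $\bS_d$, which is (iii).

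For (iv), fix $\lambda\in\bB_d$ and let $p_\lambda\in\T(\H)$ be the rank-one projection onto $\bC k_\lambda$. As a compact operator, $p_\lambda\leq 1-z_K$, so $p_\lambda$ is orthogonal to $q\leq z_K$ by centrality of $z_K$; it is clearly a closed projection. The peaking property then yields $\|M_\phi p_\lambda\|<\|g\|$. Combining the eigenvector identity $M_\phi^*k_\lambda=\ol{\phi(\lambda)}k_\lambda$ from \eqref{Eq:evector} with Cauchy--Schwarz gives
\[
|\phi(\lambda)|\,\|k_\lambda\|^2 = |\langle M_\phi k_\lambda,k_\lambda\rangle| \leq \|M_\phi k_\lambda\|\,\|k_\lambda\|,
\]
so $|\phi(\lambda)|\leq\|M_\phi k_\lambda\|/\|k_\lambda\|=\|M_\phi p_\lambda\|<\|g\|$, establishing (iv). The main obstacle is the simultaneous extraction, from the noncommutative peak-interpolation literature, of a single Bishop-type theorem delivering $M_\phi$ with all four features at once: exact interpolation $M_\phi q=bq$, optimal norm, the \emph{global} operator domination $M_\phi^*M_\phi\leq K_0^2$, and strict peaking off $q$. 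Arranging the lifts $b$ and $K_0$ so that the noncommutative machinery outputs a genuine global bound (and not merely a bound against $q$) is the delicate point, since $\T(\H)$ is noncommutative and $|b|$, $K_0$ need not commute.
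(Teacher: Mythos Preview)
Your overall strategy---pass to the closed $\A(\H)$-null projection $q\leq z_K$ with $\Theta(q)=\chi_E$, lift $g$ and $k$ through $\tau\circ\gamma$, and invoke the Bishop-type noncommutative peak-interpolation theorem---is exactly the paper's. For (i)--(iii) the two arguments are essentially identical, and your worry about the ``global'' domination is unfounded: the theorem you want is precisely \cite[Theorem 3.4]{blecher2013} (or \cite[Theorem 2.3(iii)]{blecher2022}), which from the local hypothesis $qb^*bq\leq d^2q$ (checked via $\Theta$) already delivers $a\in\A(\H)$ with $aq=bq$ and the genuinely global inequality $a^*a\leq d^2$. No compact perturbations or special commutation of $|b|$ and $K_0$ are needed.

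The real divergence, and the source of your acknowledged obstacle, is (iv). You try to obtain $|\phi(\lambda)|<\|g\|$ via a strict-peaking conclusion $\|M_\phi p\|<\|M_\phi\|$ for closed $p\perp q$, applied to the rank-one projections $p_\lambda$. The standard Bishop-type theorem does not simultaneously provide this strict peaking together with the domination $a^*a\leq d^2$, so as stated your plan has a gap. The paper sidesteps the issue entirely by a simple function-theoretic trick: choose an auxiliary strictly positive $k_0\in\rC(\bS_d)$ with $k_0|_E=1$ and $k_0<1$ on $\bS_d\setminus E$, replace $k$ by $k_1=\min\{k,k_0\}$, and run the same noncommutative argument with control function $k_1$. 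Then $|\phi|\leq k_1\leq k_0$ on $\bS_d$ forces $\phi$ to be non-constant (since $|\phi|<1$ off $E$), and the maximum modulus principle on $\bB_d$ gives (iv) immediately. This avoids any need for strict peaking from the noncommutative machinery.
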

\begin{proof}  
Let $g\in \rC(E)$ with $\|g\|=1$. Let $k_0\in \rC(\bS_d)$ be a strictly positive function such that  $k_0|_E=1$ and $0<k<1$ on $\bS_d\setminus E$. Let $k \in \rC(\bS_d)$ be strictly positive such that $|g| \leq k$ on $E$ and put $k_1=\min\{k,k_0\}$.  By Tietze's extension theorem, there is $g_0\in \rC(\bS_d)$ such that $g_0|_E=g$ and $\|g_0\|=1$. Recall that $\tau\circ \gamma:\T(\H)\to \rC(\bS_d)$ is a surjective $*$-homomorphism. Thus,  there is $b\in \T(\H)$ with $\|b\|=1$ and a positive element $d\in \T(\H)$ with  $\|d\|=\|k_1\|$ such that $(\tau\circ \gamma)(b)=g_0$ and and $(\tau\circ \gamma)(d)=k_1$.

Next, by Lemma \ref{L:closedsetproj}
 there is a closed projection $q \leq z_K$ such that $\Theta(q) = \chi_E$.  Note that $q$ is central in $\T(\H)^{**}$ since $\T(\H)^{**} z_K$ is abelian. 
 Moreover, $q$ is $\A(\H)$-null by Theorem \ref{T:nullAH}. By construction, we have
 \begin{align*}
 \Theta(d^2q)=k_1^2 \chi_E\geq |g_0|^2\chi_E=\Theta(q b^* b q)
 \end{align*}
so that $q b^* b q \leq d^2q$.
Invoking now \cite[Theorem 3.4]{blecher2013} or 
\cite[Theorem 2.3 (iii)]{blecher2022}, we obtain $a \in \A(\H)$
with $aq = qa = bq$ and $a^* a \leq d^2$.  There is a multiplier $\phi$ such that $a=M_\phi$. We thus obtain
\[
\phi\chi_E=\Theta(aq)=\Theta(bq)=g_0 \chi_E
\]
whence $\phi|_E=g$ and (ii) holds. Since $M_\phi^*M_\phi\leq d^2$ we find $\|M_\phi\| \leq \|d\|= \|k_1\| \leq 1=\|g\|$.  This  in turn this forces $\|M_\phi\|=\|g\|$, since $$\| M_\phi \| \geq \| aq \| = \| bq \| = \| g_0 \chi_E \| = \| g \|.$$
Thus we have  (i). 
In addition, applying $\Theta$ to  the inequality $M_\phi^*M_\phi\leq d^2$ implies $|\phi|^2 \leq |k_1|^2$ on $\bS_d$, and hence (iii) is satisfied. Finally, because $|\phi|\leq  k_1\leq k_0$, we infer that $\phi$ is not constant on $\bS_d$, and hence $|\phi|<1$ on $\bB_d$ by the maximum modulus principle.
\end{proof} 

The previous result is a sharpening of the scalar version of  \cite[Theorem 8.1]{DH2023}.  
Indeed, it shows that peak-interpolation can be implemented  while maintaining domination by a control function, in line with Bishop's original theorem \cite{bishop1962}. For this reason, let us say that a closed subset $E\subset \bS_d$ is a \emph{Bishop peak-interpolation set} if the conclusion of Theorem \ref{T:DHbish} holds,
that is every non-zero function $g \in C(E)$ may be `interpolated' as in (i)--(iv) of the theorem. 
By Theorem \ref{T:DHbish} this class of sets coincides with what are called {\em peak interpolation} sets in \cite{DH2023}.  One direction of this follows  from (i)--(iv) with $k = \| g \| \, k_0$, where $k_0$ is defined in the first line of the proof above.  Conversely,  any peak set in the sense of \cite{DH2023} satisfies Theorem \ref{T:DHbish} since it is $\M(\H)$-totally null  by the standard argument  in the proof of 
 \cite[Theorem 12.2]{DH2023}.   
 So we see that all these classes, and  the peak sets, and the $\M(\H)$-totally null sets,  coincide.

Next, we recover  \cite[Theorem 10.3]{DH2023}. 

\begin{theorem}\label{T:DH}Let $\H$ be a Hilbert space of analytic functions on $\bB_d$ admitting  a reproducing kernel that is unitarily invariant and regular. 
Let $E\subset \bS_d$ be a closed $\A(\H)$-interpolation set. 
Assume that there exists a non-empty $\M(\H)$-totally null set. Then, $E$ is $\M(\H)$-totally null. 
\end{theorem}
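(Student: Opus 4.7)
The plan is to reduce everything to an application of Corollary \ref{C:adapted}, using Theorem \ref{T:nullAH} to convert back and forth between the projection formulation and the measure-theoretic formulation on $\bS_d$.

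First I would set up the projection picture. Let $q\in \T(\H)^{**}$ be defined by $\Theta(q)=\chi_E$. Since $E\subset \bS_d$ is closed and $\A(\H)$-interpolation, Lemma \ref{L:closedsetproj}(ii) and Lemma \ref{L:interpset} give that $q$ is a closed projection dominated by $z_K$ satisfying $q\A(\H)=q\T(\H)$. Because $q\leq z_K$ and $\T(\H)^{**}z_K$ is commutative, $q$ is central in $\T(\H)^{**}$, so the equality $q\A(\H)=q\T(\H)$ is the same as $\A(\H)q=\T(\H)q$. Since $\T(\H)$ is nuclear, Proposition \ref{P:interpnuclear} then produces a constant $\kappa>0$ for which $q$ is a complete $(\A(\H),\kappa)$-interpolation projection.

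Second, I need to verify the atomic part hypothesis of Corollary \ref{C:adapted}. The minimal projections of $\T(\H)^{**}$ dominated by $q$ correspond via $\Theta$ to characteristic functions of singletons $\{x\}$ with $x\in E$; call these $q_x=\Theta^{-1}(\chi_{\{x\}})$, so that the atomic part of $q$ equals $\bigvee_{x\in E} q_x$ by \eqref{Eq:atomic}. The key input here is the hypothesis that some non-empty $\M(\H)$-totally null set exists: picking any point $x_0$ in such a set, the singleton $\{x_0\}$ is $\M(\H)$-totally null. By unitary invariance of the reproducing kernel, the unitary group of $\bC^d$ acts on $\H$ and induces weak-$*$ continuous automorphisms of $\M(\H)$, so the class of $\M(\H)$-Henkin measures is invariant under rotations of $\bS_d$. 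Since this action is transitive on $\bS_d$, every singleton $\{x\}\subset \bS_d$ is $\M(\H)$-totally null. Theorem \ref{T:nullAH} then tells us that each $q_x$ is $\A(\H)$-null, and, as recalled in the last line of the introduction, the nullspace of the normal state $|\phi|$ (for any $\phi\in \A(\H)^{\perp}$) is closed under joins of projections. Therefore $\bigvee_{x\in E} q_x$ is itself $\A(\H)$-null and, by Lemma \ref{L:stateineq}, lies in $\A(\H)^{\perp\perp}$.

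Finally, all hypotheses of Corollary \ref{C:adapted} are in place: $z_K$ is central and closed, $\T(\H)^{**}z_K$ is commutative, and $1-z_K$ is $\A(\H)$-adapted by Lemma \ref{L:adapted}; $q$ is closed, $q\leq z_K$, is a complete $(\A(\H),\kappa)$-interpolation projection, and has atomic part in $\A(\H)^{\perp\perp}$. The corollary concludes that $q$ is $\A(\H)$-null. Applying Theorem \ref{T:nullAH} in the reverse direction yields that $E$ is $\M(\H)$-totally null.

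I expect the main obstacle, at least conceptually, to be step two: seeing that the global hypothesis (existence of a single non-empty totally null set) upgrades, via unitary invariance, to pointwise total nullity of every singleton, which is exactly what is needed to feed the atomic part into Corollary \ref{C:adapted}. Once this observation is in hand, the rest is a matter of correctly dispatching the machinery developed in the previous sections.
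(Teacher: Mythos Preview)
Your proposal is correct and follows essentially the same route as the paper: reduce to Corollary \ref{C:adapted} via Theorem \ref{T:nullAH}, verify the complete interpolation constant through Proposition \ref{P:interpnuclear}, and check the atomic-part condition by showing every singleton in $\bS_d$ is $\M(\H)$-totally null. The only difference is that the paper obtains total nullity of singletons by citing \cite[Lemma 9.1]{DH2023} directly, whereas you sketch its proof inline (one totally null point plus transitivity of the unitary action); your version also makes explicit the passage from ``each $q_x$ is $\A(\H)$-null'' to ``$\bigvee_{x\in E} q_x\in \A(\H)^{\perp\perp}$'' via the join-closure of the null space of a normal state, a step the paper leaves to the reader.
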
 
\begin{proof} 
Consider the projection $q=\Theta^{-1}(\chi_E)\in \T(\H)^{**}z_K$, which is closed by Lemma \ref{L:closedsetproj}. To show that $E$ is $\M(\H)$-totally null, it is equivalent to show that $q$ is $\A(\H)$-null, by Theorem \ref{T:nullAH}. Note that $1-z_K$ is $\A(\H)$-adapted by Lemma \ref{L:adapted}, and $q\leq z_K$. Thus, the fact that $q$ is $\A(\H)$-null will follow from Corollary \ref{C:adapted}, once we verify all the required conditions.

Using that $E$ is an $\A(\H)$-interpolation set, we find 
 $q\A(\H)=q\T(\H)$  by Lemma \ref{L:interpset}.
Recall that $\T(\H)^{**}z_K \cong \rC(\mathbb{S}_d)^{**}$ and $q\leq z_K$, so that $q$ is central.
Since $\T(\H)$ is nuclear, by Proposition \ref{P:interpnuclear} we infer that there is $\kappa>0$ such that $q$ is a complete $(\A(\H),\kappa)$-interpolation projection.

It only remains to show that the atomic part of $q$ lies in $\A(\H)^{\perp\perp}$. Invoking \eqref{Eq:atomic}, we see that the atomic part of $\Theta(q)$ is simply $\bigvee_{\zeta\in E} \chi_{\{\zeta\}}$, so that the atomic part of $q$ is $\bigvee_{\zeta\in E} p_{\{\zeta\}}$, with $p_\zeta=\Theta^{-1}(\chi_{\{\zeta\}})$. Now,  \cite[Lemma 9.1]{DH2023} implies that the set $\{\zeta\}$ is $\M(\H)$-totally null set, whence $p_\zeta$ is $\A(\H)$-null by Theorem \ref{T:nullAH}, and in particular $p_\zeta\in \A(\H)^{\perp\perp}$.
\end{proof}

The following result closes the circle of results about peak and null sets and projections for $\A(\H)$. Recall that by \cite[Lemma 5.5]{CT2023Henkin}, there is a weak-$*$ homeomorphic $*$-isomorphism $\Omega: \T(\H)^{**}(1-z_K)\to B(\H)$ such that $\Omega(t(1-z_K))=t$ for every $t\in \T(\H)$.

\begin{corollary}\label{C:DHequiv}
Let $\H$ be a Hilbert space of analytic functions on $\bB_d$ admitting  a reproducing kernel that is unitarily invariant and regular. Let $q\in \T(\H)^{**}$ be a closed projection. Then, the following statements are equivalent. 
\begin{enumerate}[{\rm (i)}]
\item $q$ is an $\A(\H)$-peak projection.
\item $q$ is $\A(\H)$-null.
\item $q$ is dominated by $z_K$ and there is a closed $\M(\H)$-totally null subset $E\subset \bS_d$ such that $\Theta(q)=\chi_E$.
\item $q$ is dominated by $z_K$ and there is a closed Bishop peak-interpolation subset $E\subset \bS_d$ such that $\Theta(q)=\chi_E$.
\end{enumerate} 
If there exist non-empty $\M(\H)$-totally null sets, then these statements are further equivalent to the following.
\begin{enumerate}
\item[{\rm (v)}]  $q$ is dominated by $z_K$ and there is a closed $\A(\H)$-interpolation set $E\subset \bS_d$ such that $\Theta(q)=\chi_E$.
\end{enumerate}
\end{corollary}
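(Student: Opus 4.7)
The plan is to establish the cycle connecting (i)--(iv) via the implications (ii) $\Leftrightarrow$ (iii), (iii) $\Leftrightarrow$ (iv), and (ii) $\Leftrightarrow$ (i), and then to handle (v) separately under the extra hypothesis. Note that (ii) $\Leftrightarrow$ (iii) is exactly Theorem~\ref{T:nullAH} (invoked for closed $q$), while (iii) $\Rightarrow$ (iv) is Theorem~\ref{T:DHbish}. The converse (iv) $\Rightarrow$ (iii) reduces to the standard observation that Bishop peak-interpolation sets are $\M(\H)$-totally null: applying the Bishop peak-interpolation property to $g\equiv 1$ on $E$ with a control function strictly less than $1$ off $E$ produces a peak function $\phi$ for $E$, and then for any $\M(\H)$-Henkin measure $\mu$ the weak-$*$ convergence $M_\phi^n\to 0$ in $\M(\H)$ (equivalent to WOT convergence on $\H$) together with dominated convergence $\int \phi^n\, d\mu\to \mu(E)$ forces $\mu(E)=0$, as in the proof of \cite[Theorem 12.2]{DH2023}.

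For (ii) $\Rightarrow$ (i), I would invoke the Basic principle from the introduction to obtain $q\in \A(\H)^{\perp\perp}$. Since $\T(\H)$ is separable (being generated as a $\rC^*$-algebra by the finitely many coordinate multipliers), the result of Hay mentioned in the introduction yields that every closed projection in $\A(\H)^{\perp\perp}$ is an $\A(\H)$-peak projection.

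The main obstacle is (i) $\Rightarrow$ (iii). Given a peak function $a=M_\phi\in \A(\H)$ for $q$, meaning $\|a\|\leq 1$, $aq=q$, and $\|ap\|<1$ for every closed projection $p\in \T(\H)^{**}$ orthogonal to $q$, the approach proceeds in three steps. First, to show $q\leq z_K$, I would use that $a^n\to q$ in the weak-$*$ topology, whence $M_\phi^n=\Omega(a^n(1-z_K))$ converges weak-$*$ in $B(\H)$ to $\Omega(q(1-z_K))$; the identity $\langle M_\phi^n k_\lambda, k_\mu\rangle = \phi(\mu)^n \, k_\mu(\lambda)$ following from \eqref{Eq:evector}, combined with the maximum modulus principle ($|\phi|<1$ on $\bB_d$) and the density of the kernels, forces $M_\phi^n\to 0$ in WOT, so that $\Omega(q(1-z_K))=0$. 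Second, set $E=\{\zeta\in \bS_d:\phi(\zeta)=1\}$, which is closed, and note that for $\zeta\in \bS_d\setminus E$ the projection $p_\zeta = \Theta^{-1}(\chi_{\{\zeta\}})$ is closed by Lemma~\ref{L:closedsetproj} and orthogonal to $q$, so the peaking condition yields $|\phi(\zeta)| = \|M_\phi p_\zeta\| < 1$; dominated convergence then gives $\phi^n\to \chi_E$ weak-$*$ in $\rC(\bS_d)^{**}$, so that $\Theta(q)=\chi_E$. Third, $\phi$ exhibits $E$ as a peak set for $\A(\H)$, and the Henkin-measure argument from the paragraph on (iv) $\Rightarrow$ (iii) shows that $|\mu|(E)=0$ for every $\M(\H)$-Henkin measure $\mu$.

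Finally, when non-empty $\M(\H)$-totally null sets exist, Theorem~\ref{T:DH} yields (v) $\Rightarrow$ (iii), whereas (iii) $\Rightarrow$ (v) follows from (iii) $\Rightarrow$ (iv) together with the fact that any Bishop peak-interpolation set is in particular an $\A(\H)$-interpolation set (cf.\ Lemma~\ref{L:interpset}).
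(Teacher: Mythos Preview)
Your proposal is correct and follows essentially the same architecture as the paper: (ii)$\Leftrightarrow$(iii) via Theorem~\ref{T:nullAH}, (iii)$\Leftrightarrow$(iv) via Theorem~\ref{T:DHbish} and the standard peak-function/Henkin argument, (ii)$\Rightarrow$(i) via the Basic principle together with separability of $\T(\H)$, and (v) via Theorem~\ref{T:DH}.

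The only genuine difference lies in (i)$\Rightarrow$(iii). The paper shows $q\leq z_K$ by observing that any $f$ in the range of $P=\Omega(q(1-z_K))$ satisfies $M_\phi f=f$, and then invokes the identity theorem to force $P=0$; it then proves total nullity abstractly: since $a^n\to 0$ weak-$*$ in $B(\H)$ and $|\psi|$ is $\M(\H)$-Henkin whenever $\psi$ is (by \cite[Theorem~5.7]{CT2023Henkin}), one gets $|\psi|(q)=\lim_n|\psi|(a^n)=0$ directly, without ever naming $E$. Your route instead uses the reproducing kernels and the maximum modulus principle to obtain $M_\phi^n\to 0$ in WOT, and then explicitly identifies $E=\{\phi=1\}$ before running the Henkin argument at the level of measures. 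Both are valid; the paper's argument is slightly cleaner because it bypasses the explicit description of $E$, while yours has the advantage of producing the concrete peak set. One small point you should make explicit: your claim that $p_\zeta\perp q$ for $\zeta\notin E$ is not immediate from the definition of $E$; it follows because $aq=q$ gives $\phi\cdot\Theta(q)=\Theta(q)$, so the closed set $F$ with $\Theta(q)=\chi_F$ (Lemma~\ref{L:closedsetproj}) satisfies $F\subset E$, whence $\zeta\notin E$ implies $\zeta\notin F$ and thus $p_\zeta\,q=0$.
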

\begin{proof}
(ii) $\Leftrightarrow$ (iii): This follows from Theorem \ref{T:nullAH}.

(ii) $\Rightarrow$ (i): This follows from the ``Basic principle" 
in the introduction.

(i) $\Rightarrow$ (iii):  By assumption, there is a contraction $a \in \A(\H)$ such that $aq=q$ and $\|ap\|<1$ for every closed projection $p\in \T(\H)^{**}$ with $pq=0$. Consider the projection $P=\Omega(q(1-z_K))$ in $B(\H)$. Since the sequence $(a^n)$ converges to $q$ in the weak-$*$ topology of $\T(\H)^{**}$ \cite[Lemma 3.6]{hay2007}, it follows that the sequence 
$\Omega(a^n (1-z_K))=a^n$ converges to $P$ in the weak-$*$ topology of $B(\H)$. Then, any vector $f\in \H$ in the range of $P$ satisfies $af=f$. Since $\H$ consists of analytic functions on the ball, by the identity theorem we conclude that $a=1$ if $P\neq 0$. But $a\neq 1$ since $q$ is $\A(\H)$-null. Therefore, $P=0$, and we conclude that $(a^n)$ converges to $0$ in the weak-$*$ topology of $B(\H)$. 

Next, let $\phi\in \T(\H)^*$ be an $\M(\H)$-Henkin functional. By virtue of \cite[Theorem 5.7]{CT2023Henkin}, $|\phi|$ is also $\M(\H)$-Henkin. By definition, this implies that
\[
|\phi|(q)=\lim_{n\to\infty}|\phi|(a^n)=0
\]
so $q$ is $\M(\H)$-totally null. We see that (iii) follows from Theorem \ref{T:nullAH}.

(iii) $\Rightarrow$ (iv): This is Theorem \ref{T:DHbish}.

(iv) $\Rightarrow$ (iii):  See the remark after Theorem \ref{T:DHbish}.

It is trivial that (iv) $\Rightarrow$ (v), while (v)$\Rightarrow$ (iii) follows from Theorem \ref{T:DH} under the assumption that there exist non-empty $\M(\H)$-totally null sets.
\end{proof}

We 
record some consequences of our arguments so far, using also the facts established in the remark after Theorem \ref{T:DHbish}.  
We reiterate that we are recovering results from \cite{DH2023} here using our techniques. 

\begin{corollary}\label{bij} Let $\H$ be a Hilbert space of analytic functions on $\bB_d$ admitting  a reproducing kernel that is unitarily invariant and regular.  
The $*$-isomorphism  $\Theta:\T(\H)^{**}z_K\to \rC(\bS_d)^{**}$ restricts to an order preserving  bijection  between the 
class of closed projections $q$ characterized in Corollary \ref{C:DHequiv} and the class of Bishop peak-interpolation (or equivalently,  peak, or $\M(\H)$-totally null) subsets $E$ of $\bS_d$.
\end{corollary}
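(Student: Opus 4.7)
The plan is to derive Corollary \ref{bij} almost entirely from the equivalences already packaged in Corollary \ref{C:DHequiv}, using that $\Theta$ is a weak-$*$ homeomorphic $*$-isomorphism. First I would make the correspondence explicit: for each closed projection $q$ in the specified class, Corollary \ref{C:DHequiv}(iii) produces a closed $\M(\H)$-totally null set $E\subset\bS_d$ with $\Theta(q)=\chi_E$, and $E$ is unique since it is recovered as $\supp(\Theta(q))$. Conversely, given such a set $E$, put $q:=\Theta^{-1}(\chi_E)\leq z_K$; Lemma \ref{L:closedsetproj}(ii) shows $q$ is closed in $\T(\H)^{**}$, and Corollary \ref{C:DHequiv} places $q$ in the distinguished class. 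This delivers the bijection.

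Next, I would establish the parenthetical equivalence of the three classes of subsets. The equivalence (iii)$\Leftrightarrow$(iv) in Corollary \ref{C:DHequiv} immediately gives that the collections of closed $\M(\H)$-totally null sets and of closed Bishop peak-interpolation sets coincide, since they correspond under $\Theta$ to the very same projections. For the class of peak sets (in the sense of \cite{DH2023}), the remark following Theorem \ref{T:DHbish} provides both inclusions: any Bishop peak-interpolation set is a peak set by choosing $k=\|g\|k_0$ in (i)--(iv) of Theorem \ref{T:DHbish}, while any peak set is $\M(\H)$-totally null by the standard argument mentioned there (cf. \cite[Theorem 12.2]{DH2023}).

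Finally, for order preservation: since $\Theta$ is a $*$-isomorphism it preserves the order on projections, and for closed subsets $E,F\subset\bS_d$ the inequality $\chi_E\leq\chi_F$ in $\rC(\bS_d)^{**}$ is equivalent to $E\subset F$. Consequently, projections $q_1,q_2$ in the distinguished class satisfy $q_1\leq q_2$ if and only if the corresponding sets obey $E_1\subset E_2$. The only potential obstacle here is bookkeeping, namely tracking which piece of the order-preserving bijection is supplied by Corollary \ref{C:DHequiv}, which by Theorem \ref{T:DHbish}, and which by the remark after it; the substantive analytic content has already been absorbed into those earlier results, so no new estimates or constructions are required.
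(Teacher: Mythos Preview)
Your proposal is correct and follows essentially the same approach as the paper, which gives no explicit proof and simply records the corollary as a consequence of the arguments in Corollary \ref{C:DHequiv} together with the remark after Theorem \ref{T:DHbish}. You have merely made explicit the bookkeeping the paper leaves implicit, invoking Lemma \ref{L:closedsetproj}(ii) for the inverse map and the remark after Theorem \ref{T:DHbish} for the equivalence of the three classes of sets, exactly as the paper intends.
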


Using this bijection one may 
add `interpolation set' to the equivalences in the last line, if nonempty peak sets exist.
To close this section,
we relate the statements above to the condition that 
$q\A(\H)=q\T(\H)$. The key observation is the following. We say that a vector $\xi\in \H$ is 
\emph{strictly $*$-cyclic} for  $\A(\H)$ if $\A(\H)^*\xi=\H$. 


\begin{lemma}\label{L:qz}
Let $q\in \T(\H)^{**}$ be a closed projection such that $q\A(\H)=q\T(\H)$. If $\A(\H)$ admits no strictly $*$-cyclic vector in $\H$, then $q\leq z_K$.
\end{lemma}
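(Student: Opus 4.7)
The plan is to work with the isomorphism $\Omega$. Set $P = \Omega(q(1-z_K)) \in B(\H)$, which is a projection, and the goal $q \leq z_K$ is equivalent to $P = 0$. I will argue by contradiction: assuming $P \neq 0$, I will produce a strictly $*$-cyclic vector for $\A(\H)$ in $\H$.

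First, I would translate the interpolation hypothesis through $\Omega$. Because $1-z_K$ is central, multiplying the relation $qa = qt$ by $1-z_K$ on either side gives $q(1-z_K) a = q(1-z_K) t$ in $\T(\H)^{**}(1-z_K)$, for all $a \in \A(\H)$ and $t \in \T(\H)$. Since $\Omega$ is a $*$-homomorphism satisfying $\Omega(c(1-z_K)) = c$ for each $c \in \T(\H)$, applying it yields $Pa = Pt$ in $B(\H)$. Hence the inclusion $q\T(\H) \subseteq q\A(\H)$ becomes: for every $T \in \T(\H)$ there exists $a_T \in \A(\H)$ with $P(a_T - T) = 0$. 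Taking adjoints, this reads $(a_T^* - T^*) f = 0$ for every $f \in \operatorname{ran}(P)$, i.e.\ $a_T^* f = T^* f$ on $\operatorname{ran}(P)$.

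Now the key step: specialize to rank-one operators, which lie in $K \subseteq \T(\H)$. Assume $P \neq 0$ and fix a nonzero $f_0 \in \operatorname{ran}(P)$; choose $\eta \in \H$ with $\langle f_0, \eta\rangle = 1$. For arbitrary $\xi \in \H$, consider the rank-one operator $T_{\eta,\xi} \in K$ defined by $v \mapsto \langle v, \xi\rangle \eta$, whose adjoint satisfies $T_{\eta,\xi}^* f_0 = \langle f_0, \eta\rangle \xi = \xi$. The previous paragraph supplies $a \in \A(\H)$ with $a^* f_0 = T_{\eta,\xi}^* f_0 = \xi$. Since $\xi$ was arbitrary, $\A(\H)^* f_0 = \H$, meaning $f_0$ is a strictly $*$-cyclic vector for $\A(\H)$. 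This contradicts the hypothesis, so $P = 0$ and $q \leq z_K$.

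There is no real obstacle here; the only point requiring a little care is the passage through $\Omega$, where one must use the centrality of $1 - z_K$ to see that both the left action of $\A(\H)$ and the relation $q\A(\H) = q\T(\H)$ survive the reduction to $\T(\H)^{**}(1-z_K)$. Everything else is a short Hilbert space computation exploiting the fact that $\T(\H) \supseteq K$ contains all finite-rank operators.
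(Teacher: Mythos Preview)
Your proof is correct and follows essentially the same route as the paper: set $P=\Omega(q(1-z_K))$, transport the interpolation relation through $\Omega$ to $B(\H)$, and then exploit rank-one operators in $K\subset\T(\H)$ to produce a strictly $*$-cyclic vector from any nonzero $f_0\in\operatorname{ran}(P)$. The only cosmetic difference is that the paper takes adjoints before applying $\Omega$ (obtaining $\A(\H)^*P=\T(\H)P$) and routes the argument through a rank-one subprojection $E\leq P$, whereas you take adjoints afterward and work directly with vectors; both lead to the same contradiction.
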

\begin{proof}
If we put $P=\Omega(q(1-z_K))$ as before, then it suffices to show that $P=0$. As subspaces in $\T(\H)^{**}$, we have $\A(\H)^*q=\T(\H)q$, which in turn implies $\A(\H)^*q(1-z_K)=\T(\H)q(1-z_K)$. Upon applying $\Omega$, we find $\A(\H)^*P=\T(\H)P$ as subsets in $B(\H)$. If $P\neq 0$, then there is a unit vector $\xi\in \H$ in the range of $P$; let $E\in B(\H)$ denote the corresponding rank-one projection. Since $E\leq P$, we have $\A(\H)^*E=\T(\H)E$. Let $\eta\in \H$ be an arbitrary vector. Choose a rank-one operator  $t\in B(\H)$ such that $t\xi=\eta$. Then, $t\in 
K(\H) \subset \T(\H)$ so there is $a\in \A(\H)$ with $a^*E=tE$. This implies that 
$
a^*\xi=a^*E\xi=tE\xi=\eta.
$
In other words, $\A(\H)^*\xi=\H$, so that $\xi$ is strictly $*$-cyclic for $\A(\H)$.
\end{proof}

We can now give a complement to Corollary \ref{C:DHequiv}.

\begin{theorem}\label{T:ncDH}
Let $q\in \T(\H)^{**}$ be a closed projection such that $q\A(\H)=q\T(\H)$. Assume that
\begin{enumerate}[{\rm (i)}]
\item $\A(\H)$ admits no strictly $*$-cyclic vector in $\H$, and
\item there exists a non-empty $\M(\H)$--totally null set.
\end{enumerate}
Then, $q$ is $\M(\H)$-totally null, and hence is $\A(\H)$-null.
\end{theorem}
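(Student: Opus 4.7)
The proof is a direct assembly of results established earlier in the paper; no new technical input is required. The plan is to move from the interpolation hypothesis through the structural results to conclude nullity.

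First, I will invoke Lemma \ref{L:qz}. Since $q$ is a closed projection satisfying $q\A(\H)=q\T(\H)$ and $\A(\H)$ admits no strictly $*$-cyclic vector in $\H$, that lemma yields $q\leq z_K$. This step is essential because the subsequent identification with subsets of the sphere only applies to projections dominated by $z_K$.

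Next, with $q\leq z_K$ in hand, Lemma \ref{L:interpset} applies directly: the equality $q\A(\H)=q\T(\H)$ together with closedness of $q$ produces a closed $\A(\H)$-interpolation subset $E\subset \bS_d$ such that $\Theta(q)=\chi_E$. At this point we have converted the noncommutative hypothesis into a classical interpolation statement on the sphere.

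The third step is to apply Theorem \ref{T:DH} to the set $E$. Since $E$ is a closed $\A(\H)$-interpolation subset and the hypothesis (ii) guarantees the existence of a nonempty $\M(\H)$-totally null set, the theorem gives that $E$ itself is $\M(\H)$-totally null.

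Finally, I will translate back to the projection level. Having $q\leq z_K$ with $\Theta(q)=\chi_E$ and $E$ closed and $\M(\H)$-totally null, the implication (iii)$\Rightarrow$(ii) of Theorem \ref{T:nullAH} shows that $q$ is $\M(\H)$-totally null; the equivalence (ii)$\Leftrightarrow$(i) of the same theorem then gives that $q$ is $\A(\H)$-null. I expect no real obstacle, since every intermediate step has been set up precisely for this chain of implications; the only subtlety is simply remembering to use Lemma \ref{L:qz} at the outset to secure $q\leq z_K$ before invoking the $\Theta$-correspondence.
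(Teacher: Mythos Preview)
Your proposal is correct and follows exactly the same route as the paper's own proof: use Lemma~\ref{L:qz} to get $q\leq z_K$, then Lemma~\ref{L:interpset} to produce the closed $\A(\H)$-interpolation set $E$ with $\Theta(q)=\chi_E$, then Theorem~\ref{T:DH} to make $E$ totally null, and finally Theorem~\ref{T:nullAH} to translate back to $q$.
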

\begin{proof}
By Lemma \ref{L:qz}, we see that $q\leq z_K$. Hence, by Lemma  \ref{L:interpset}, there is a closed $\A(\H)$-interpolation subset $E\subset \bS_d$ such that $\Theta(q)=\chi_E$.  By virtue of Theorem \ref{T:nullAH}, to show that $q$ is $\M(\H)$-totally null, it is then equivalent to show that $E$ is $\M(\H)$-totally null, which follows at once from Theorem \ref{T:DH}.
\end{proof}

We do not know of a characterization of those spaces $\H$ for which $\A(\H)$ admits no strictly $*$-cyclic vector. Nevertheless, we can give a simple sufficient condition: such vectors cannot exist provided that $\A(\H)$ is properly contained in $\M(\H)$ (which is the case in many classical examples). 
Indeed, this follows from the following general fact. 

If $A\subset B(H)$ is a commutative subalgebra \emph{properly} contained in its commutant $A'$, then $A$ admits no strictly $*$-cyclic vector. To see this, assume that $\xi\in H$ is strictly $*$-cyclic for $A$, and choose $b\in A'$ with $b\notin A$. Then, there is $a\in A$ such that $a^*\xi=b^*\xi$. Using that $A$ is commutative, this further implies $a^*c^*\xi=b^* c^*\xi$ for every $c\in A$, and in turn we must have $a^*=b^*$ or $a=b$, a contradiction. 

Some additional partial results on the existence of strictly cyclic vector can be found in \cite{shields1974}.

\section{Null projections and the  Riesz projection}\label{S:riesznull}
In this section, we consider certain projections arising from convex sets of states on a $\rC^*$-algebra, and aim to understand when they are null,
 or more generally when projections annihilating the `absolutely continuous’ functionals are null.  Let us be more precise.

Let $B$ be a 
 $\rC^*$-algebra. Let $\phi$ and $\psi$ be continuous linear functionals on $B$. We say that $\phi$ is \emph{absolutely continuous} with respect to $\psi$ if $|\phi|(x^*x)=0$ for every $x\in B^{**}$ satisfying $|\psi|(x^*x)=0$. Equivalently, this means that the support projection of $|\phi|$ is dominated by the support projection of $|\psi|$. Here, by the support projection of a state $\theta$ on $B$, we mean the unique projection $s_\theta\in B^{**}$ such that
\[
\{x\in B^{**}:\theta(x^*x)=0\}=B^{**}(1-s_\theta).
\]
When the support projections of $|\phi|$ and $|\psi|$ are instead orthogonal, we say that $\phi$ and $\psi$ are \emph{singular}. The reader may consult \cite[Section 2]{CT2023Henkin} for more detail on absolute continuity and singularity  for states.

Let now $\Delta$ be a norm-closed convex set of states on $B$.  We let $\AC(\Delta)\subset B^*$ denote the set of those functionals that are absolutely continuous with respect to some element of $\Delta$. Similarly, we let $\SG(\Delta)$ denote the set of those functionals that are mutually singular with respect to every element of $\Delta$.   By \cite[Theorem 3.5]{CT2023Henkin}, there is a projection $r_\Delta\in B^{**}$ with the property that $\AC(\Delta)=((1-r_\Delta)B^{**})_\perp$ and $\SG(\Delta)=(r_\Delta B^{**})_\perp$. We sometimes refer to $r_\Delta$ as the \emph{Riesz projection} corresponding to $\Delta$. The following easy observation will be used several times below.

\begin{lemma}\label{L:SG}
Let $\phi,\psi\in B^*$ such that $\phi$ is absolutely continuous with respect to $\psi$. Then, the following statements hold.
\begin{enumerate}[{\rm (i)}]
\item If $\psi\in \AC(\Delta)$, then $\phi\in \AC(\Delta)$.
\item If $\psi\in \SG(\Delta)$, then $\phi\in \SG(\Delta)$.
\end{enumerate}
\end{lemma}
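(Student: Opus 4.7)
The proof plan is to reformulate both statements purely in terms of support projections of the moduli, after which each reduces to a one-line observation.

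First I would translate the hypothesis: by definition, $\phi$ being absolutely continuous with respect to $\psi$ is equivalent to $s_{|\phi|} \leq s_{|\psi|}$ in $B^{**}$, where $s_{|\cdot|}$ denotes the support projection of the modulus. This is essentially the definition unwound, noting that $\{x \in B^{**} : |\psi|(x^*x) = 0\} = B^{**}(1 - s_{|\psi|})$. Similarly, singularity of two functionals amounts to orthogonality of the corresponding support projections. I would record these two reformulations explicitly as the starting point of the proof.

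For part (i), the plan is to invoke transitivity of the support-projection ordering. If $\psi \in \AC(\Delta)$, there exists $\omega \in \Delta$ such that $\psi$ is absolutely continuous with respect to $\omega$, i.e.\ $s_{|\psi|} \leq s_{|\omega|}$. Combining with the hypothesis $s_{|\phi|} \leq s_{|\psi|}$ yields $s_{|\phi|} \leq s_{|\omega|}$, so $\phi$ is absolutely continuous with respect to $\omega \in \Delta$ and hence $\phi \in \AC(\Delta)$.

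For part (ii), the plan is similar. If $\psi \in \SG(\Delta)$, then for every $\omega \in \Delta$, the support projections $s_{|\psi|}$ and $s_{|\omega|}$ are orthogonal. Using $s_{|\phi|} \leq s_{|\psi|}$, one gets $s_{|\phi|} s_{|\omega|} = 0$ for every $\omega \in \Delta$, so $\phi$ is singular with respect to each element of $\Delta$, i.e.\ $\phi \in \SG(\Delta)$.

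There is no real obstacle here; the entire content sits in the observation that absolute continuity and singularity are captured exactly by order-theoretic relations between support projections of moduli, which is recalled just before the lemma. If one wanted to avoid invoking support projections abstractly, one could argue directly: for (i), any $x \in B^{**}$ with $|\omega|(x^*x) = 0$ forces $|\psi|(x^*x) = 0$ and then $|\phi|(x^*x) = 0$; for (ii), if $s_{|\psi|} s_{|\omega|} = 0$, then $|\phi|((1 - s_{|\psi|})^* \cdot) = |\phi|$ combined with the relation $s_{|\omega|} \leq 1 - s_{|\psi|}$ yields singularity of $\phi$ and $\omega$. Either route gives a very short argument.
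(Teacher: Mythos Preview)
Your proof is correct. Part (i) matches the paper's approach exactly (the paper simply calls it ``trivial''). Part (ii), however, takes a genuinely different and more elementary route than the paper. The paper argues via the Riesz projection: it uses the identification $\SG(\Delta)=(r_\Delta B^{**})_\perp$ to show first that $\psi(\cdot\,b)\in\SG(\Delta)$ for every $b\in B$, and then invokes \cite[Theorem 2.7]{CT2023Henkin} (which expresses $\phi$ as a norm limit of such functionals when $\phi$ is absolutely continuous with respect to $\psi$) together with the fact that $\SG(\Delta)$ is norm closed. Your argument bypasses all of this machinery by working directly with the support-projection definitions recalled immediately before the lemma: from $s_{|\phi|}\leq s_{|\psi|}$ and $s_{|\psi|}s_\omega=0$ one gets $s_{|\phi|}s_\omega=0$ in one step. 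Your approach is self-contained and requires neither the existence of $r_\Delta$ nor the external reference; the paper's approach, while heavier, has the side benefit of exhibiting $\SG(\Delta)$ as a closed right $B$-submodule of $B^*$, a fact used elsewhere.
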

\begin{proof}
(i) is trivial. To see (ii), assume that $\psi\in \SG(\Delta)$. Fix $b\in B$ and put $\rho=\psi(\cdot b)$. Then, $\rho(r_\Delta x)=\psi(r_\Delta x b)=0$ for each $x\in B^{**}$, since $\psi\in (r_\Delta B^{**})_\perp$. This shows that $\rho\in (r_\Delta B^{**})_\perp=\SG(\Delta)$. Further, the equality  $(r_\Delta B^{**})_\perp=\SG(\Delta)$ shows that $\SG(\Delta)$ is a norm-closed subspace, so from the previous argument we conclude that $\phi\in \SG(\Delta)$ upon invoking \cite[Theorem 2.7]{CT2023Henkin}.
\end{proof}

Let $A\subset B$ be a subalgebra. Henceforth we assume that this inclusion is unital for simplicity. 
A unifying motif behind this section
is to determine when the Riesz projection $r_\Delta$ is $A$-null. In doing so, we will highlight connections with many different properties of interest from the point of view of noncommutative function theory.

First, we tackle the issue of determining when $r_\Delta$ lies in $A^{\perp\perp}$. For this purpose, we need the following notion. We say that $A$ has the \emph{F.\&M. Riesz property in $B$ with respect to $\Delta$} if, whenever $\phi\in A^\perp$, we have that $\phi(r_\Delta \cdot)\in A^{\perp}$. In this case, we also have $\phi((1-r_\Delta) \cdot)\in A^{\perp}$. In other words, this property says that the absolutely continuous and singular parts (with respect to $\Delta$) of a functional on $B$ annihilating $A$ must also annihilate $A$. This can be reformulated as follows.

 \begin{lemma}\label{L:FMRiesz}
The following statements are equivalent.
 \begin{enumerate}[{\rm (i)}]
\item $A$ has the F.\&M. Riesz property in $B$ with respect to $\Delta$.
\item The Riesz projection $r_\Delta$ lies in $A^{\perp\perp}$.
\end{enumerate} 
 \end{lemma}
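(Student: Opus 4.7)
My strategy is to play the two formulations of membership in the bipolar of $A$ against each other via the canonical weak-$*$ continuous extension of functionals in $B^*$ to $B^{**}$. Recall that $r_\Delta\in A^{\perp\perp}$ is equivalent to $\phi(r_\Delta)=0$ for every $\phi\in A^\perp$, once we identify $\phi$ with its unique weak-$*$ continuous extension to $B^{**}$. The F.\&M. Riesz property asserts the seemingly stronger fact that $\phi(r_\Delta a)=0$ for every $a\in A$ and every $\phi\in A^\perp$; the equivalence will follow because the subalgebra structure of $A$ lets us absorb the extra factor $a$ into $\phi$.

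For (i) $\Rightarrow$ (ii), I would fix $\phi\in A^\perp$ and apply the F.\&M. Riesz property to get $\phi(r_\Delta\,\cdot)\in A^\perp$. Evaluating this functional at the unit $1\in A$, which is available because the inclusion $A\subset B$ is unital, produces $\phi(r_\Delta)=0$. Since $\phi$ was arbitrary this is precisely the statement $r_\Delta\in A^{\perp\perp}$.

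For (ii) $\Rightarrow$ (i), I would fix $\phi\in A^\perp$ and $a\in A$ and define $\psi_a\in B^*$ by $\psi_a(b)=\phi(ba)$. Because $A$ is a subalgebra, $a'a\in A$ for every $a'\in A$, hence $\psi_a(a')=\phi(a'a)=0$ and $\psi_a\in A^\perp$. The weak-$*$ continuous extension of $\psi_a$ to $B^{**}$ is the map $x\mapsto \phi(xa)$, using separate weak-$*$ continuity of right multiplication by $a$ on $B^{**}$ composed with the extension of $\phi$. Applying (ii) to $\psi_a$ gives $\psi_a(r_\Delta)=\phi(r_\Delta a)=0$. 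Since this holds for every $a\in A$, the functional $\phi(r_\Delta\,\cdot)$ annihilates $A$, which is the F.\&M. Riesz property.

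There is no real obstacle here: the argument is a short formal exercise in the interplay between $A^\perp\subset B^*$ and $A^{\perp\perp}\subset B^{**}$. The one point to verify carefully is that the weak-$*$ extension of $b\mapsto\phi(ba)$ really is $x\mapsto\phi(xa)$, which reduces to the standard fact that $B^{**}$ is a dual Banach algebra for each one-sided multiplication.
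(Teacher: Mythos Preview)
Your proof is correct and essentially matches the paper's argument. The only cosmetic difference is in (ii) $\Rightarrow$ (i): the paper observes directly that $r_\Delta A\subset A^{\perp\perp}$ (since $A^{\perp\perp}$ is the weak-$*$ closure of the subalgebra $A$ and hence itself a subalgebra), whereas you dualize by pushing the factor $a$ into the functional via $\psi_a=\phi(\cdot\,a)\in A^\perp$; these are two sides of the same coin.
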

 \begin{proof}
 (i) $\Rightarrow$ (ii): Let $\phi\in A^\perp$. Then, $\phi(r_\Delta\cdot)\in A^\perp$ by
  the F.\&M. Riesz property. Evaluating at $1\in A$, we obtain in particular $\phi(r_\Delta)=0$. We conclude that $r_\Delta\in A^{\perp\perp}$.
 
 (ii) $\Rightarrow$ (i): Let $\phi\in A^\perp$. Then, $\phi$ annihilates $A^{\perp\perp}$, and in particular it annihilates $r_\Delta A$ since $r_\Delta\in A^{\perp\perp}
 = \bar{A}^{w*}$. This implies in turn that $\phi(r_\Delta \cdot)$ annihilates $A$.
 \end{proof}

Notice that if 
\begin{equation}\label{Eq:SFM}
A^\perp\subset \AC(\Delta)
\end{equation} 
then trivially $A$ has the F.\&M. Riesz property in $B$ with respect to $\Delta$. We therefore refer to
condition \eqref{Eq:SFM} as the \emph{strong F.\&M. Riesz property in $B$ with respect to $\Delta$} for $A$. 
For example, the original  F.\&M. Riesz theorem shows that the disk algebra
$A(\bD)$ has the  strong F.\&M. Riesz property in $\rC(\bT)$  with respect to arclength measure. 
This can also be encoded in terms of the Riesz projection, as we show next.

 \begin{lemma}\label{L:SFMRiesz}
The following statements are equivalent.
 \begin{enumerate}[{\rm (i)}]
\item $A$ has the  strong F.\&M. Riesz property in $B$ with respect to $\Delta$.
\item The Riesz projection $1-r_\Delta$ is $A$-null.
\end{enumerate} 
 \end{lemma}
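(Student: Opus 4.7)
The plan is to reduce the equivalence to a single cleaner equivalence at the level of individual functionals, namely:
\begin{equation*}
\phi \in \AC(\Delta) \iff |\phi|(1-r_\Delta) = 0, \qquad \phi \in B^*.
\end{equation*}
Once this is established, both implications of the lemma read off directly. For (i)$\Rightarrow$(ii), every $\phi\in A^\perp$ lies in $\AC(\Delta)$ by assumption, hence $|\phi|(1-r_\Delta)=0$, which is exactly the statement that $1-r_\Delta$ is $A$-null. For (ii)$\Rightarrow$(i), the $A$-nullity of $1-r_\Delta$ gives $|\phi|(1-r_\Delta)=0$ for every $\phi\in A^\perp$, so each such $\phi$ lies in $\AC(\Delta)$, which is precisely \eqref{Eq:SFM}.

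To prove the boxed equivalence, I would start from the definition $\AC(\Delta) = ((1-r_\Delta)B^{**})_\perp$ and use a polar-type decomposition for $\phi$: pick a partial isometry $v\in B^{**}$ with $\phi = |\phi|(\cdot\, v)$ and $|\phi| = \phi(\cdot\, v^*)$, as was already used in the proof of Lemma \ref{L:stateineq}. For the forward direction, if $\phi$ annihilates $(1-r_\Delta)B^{**}$, then in particular
\begin{equation*}
|\phi|(1-r_\Delta) = \phi((1-r_\Delta)v^*) = 0,
\end{equation*}
since $(1-r_\Delta)v^* \in (1-r_\Delta)B^{**}$. For the reverse direction, assume $|\phi|(1-r_\Delta)=0$; then Lemma \ref{L:stateineq} applied to the projection $p = 1-r_\Delta$ yields, for every $x\in B^{**}$,
\begin{equation*}
|\phi((1-r_\Delta)x)|^2 \leq \|\phi\|\,\|x\|^2\, |\phi|(1-r_\Delta) = 0,
\end{equation*}
so $\phi$ annihilates $(1-r_\Delta)B^{**}$, i.e.\ $\phi\in \AC(\Delta)$.

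I do not anticipate any genuine obstacle here: the polar decomposition $\phi=|\phi|(\cdot v)$ converts annihilation of the left ideal $(1-r_\Delta)B^{**}$ into a statement about $|\phi|$ on the projection $1-r_\Delta$, and Lemma \ref{L:stateineq} provides the converse passage from $|\phi|(1-r_\Delta)=0$ back to annihilation of the entire left ideal. The only subtlety worth flagging is to be careful about the one-sided nature of the characterization of $\AC(\Delta)$ as a preannihilator of a left ideal, which is why the partial isometry appears on the right in $\phi=|\phi|(\cdot v)$; this makes the identity $|\phi|(1-r_\Delta)=\phi((1-r_\Delta)v^*)$ land inside that left ideal, as needed.
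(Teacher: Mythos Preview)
Your proof is correct and follows essentially the same route as the paper: both directions ultimately rest on the identification $\AC(\Delta)=((1-r_\Delta)B^{**})_\perp$ together with Lemma~\ref{L:stateineq}. The only cosmetic difference is in (i)$\Rightarrow$(ii): the paper observes directly that $\phi\in\AC(\Delta)$ forces $|\phi|\in\AC(\Delta)$ (since $|\phi|$ and $\phi$ share the same modulus), whence $|\phi|(1-r_\Delta)=0$, whereas you reach the same conclusion via the polar decomposition identity $|\phi|(1-r_\Delta)=\phi((1-r_\Delta)v^*)$; both are one-line arguments and the paper's (ii)$\Rightarrow$(i) is identical to yours.
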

 \begin{proof}
(i) $\Rightarrow$ (ii): The assumption immediately implies that
$
\{|\phi|:\phi\in A^\perp\}\subset \AC(\Delta).
$
Given $\phi\in A^\perp$, we thus find $|\phi|=|\phi|(r_\Delta\cdot)$ and $|\phi|(1-r_\Delta)=0$.

(ii) $\Rightarrow$ (i): 
Let $\phi\in A^\perp$. By assumption, $|\phi|(1-r_\Delta)=0$, so that $\phi((1-r_\Delta)\cdot)=0$ by Lemma \ref{L:stateineq}. This shows that $\phi\in \AC(\Delta)$.
\end{proof}

Our next result will provide several conditions that interpolate between Lemmas \ref{L:FMRiesz} and \ref{L:SFMRiesz}. For this purpose, we need a technical result from \cite{CT2023Henkin}. A projection $q\in B^{**}$ will be said to be of \emph{type $F$} if it is the supremum of an arbitrary collection of closed projections in $B^{**}$. If the collection can be chosen to be countable, we say that $q$ is of \emph{type $F_\sigma$}. If the collection can be chosen to be finite, we say that $q$ is of \emph{type $F_0$}. If $B$ is not commutative, such a projection need not be closed \cite[Example II.6]{akemann1969}

\begin{lemma}\label{L:Rainwater}
Assume that $\Delta$ is closed in the weak-$*$ topology of $B^*$. Let $\psi$ be a state in $\SG(\Delta)$. Then, there is a type $F_\sigma$ projection $q\in B^{**}$ annihilating $\AC(\Delta)$ and satisfying $\psi(q)=1$.  
\end{lemma}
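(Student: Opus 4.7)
The idea is to adapt Rainwater's classical construction to the noncommutative setting. Since $\Delta$ is norm bounded and weak-$*$ closed, Banach--Alaoglu yields that $\Delta$ is weak-$*$ compact. The plan is to first show that for every $\eta>0$ there is a closed projection $q_\eta\in B^{**}$ with $q_\eta\le 1-r_\Delta$ and $\psi(q_\eta)>1-\eta$. Given this, setting $q:=\bigvee_{n\ge 1}q_{2^{-n}}$ (arranged as a countable increasing supremum of closed projections) produces a type $F_\sigma$ projection with $q\le 1-r_\Delta$, so that $q\in\AC(\Delta)^\perp$ annihilates $\AC(\Delta)$; the normality of the (unique) extension of $\psi$ to $B^{**}$ then forces $\psi(q)=\sup_n\psi(q_{2^{-n}})=1$, concluding the proof.

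To build $q_\eta$, for each $\phi\in\Delta$ the assumption $\psi\in\SG(\Delta)$ together with the identity $\SG(\Delta)=(r_\Delta B^{**})_\perp$ ensures $s_\psi s_\phi=0$ in $B^{**}$. Applying Goldstine's theorem to the positive contraction $s_\psi\in B^{**}$ yields a positive contraction $a_\phi\in B$ with $\psi(a_\phi)>1-\eta/2$ and $\phi(a_\phi)<\eta/2$. Since $a_\phi\in B$, the evaluation $\phi'\mapsto\phi'(a_\phi)$ is weak-$*$ continuous on $B^*$, so the set $\{\phi':\phi'(a_\phi)<\eta\}$ is weak-$*$ open, contains $\phi$, and these sets cover $\Delta$ as $\phi$ varies over $\Delta$. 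Weak-$*$ compactness of $\Delta$ then produces a finite subcover $\Delta\subset\bigcup_{i=1}^N U_{\phi_i}$, and spectral calculus applied to each $a_{\phi_i}$ yields closed (compact) projections with an analogous large/small separation between $\psi$ and the finitely many $\phi_i$.

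The main obstacle is then to assemble these finitely many closed projections into a single closed projection $q_\eta\le 1-r_\Delta$ with $\psi(q_\eta)>1-\eta$. Unlike the classical measure-theoretic case, one cannot rely on subadditivity of the form $\psi(p_1\vee p_2)\le\psi(p_1)+\psi(p_2)$ for non-commuting projections in $B^{**}$ (easy $2\times 2$ examples show this fails), and na\"ive finite meets of the local projections need not have large $\psi$-measure. I expect this is where the weak-$*$ closedness of $\Delta$ enters most essentially, through an exhaustion argument: set $\alpha:=\sup\{\psi(q): q\text{ of type }F_\sigma,\ q\le 1-r_\Delta\}$, realize $\alpha$ as $\psi(q_\infty)$ where $q_\infty$ is the countable join of a maximizing sequence, and if $\alpha<1$ restrict $\psi$ (after normalization) to the corner $(1-q_\infty)B^{**}(1-q_\infty)$. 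Because $q_\infty\le 1-r_\Delta$ forces $q_\infty$ to commute with $r_\Delta$, the renormalized state remains in $\SG(\Delta)$, and the preliminary finite-cover construction carried out inside this corner produces a further nonzero closed subprojection of $1-r_\Delta$ disjoint from $q_\infty$, contradicting the maximality of $\alpha$ and forcing $\alpha=1$.
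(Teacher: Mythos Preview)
Your proposal correctly identifies the classical Rainwater strategy and the central obstruction in the noncommutative setting, but it does not close the gap. The paper's own proof is a two-line citation: it invokes \cite[Theorem 3.7]{CT2023Henkin} to produce a type $F_\sigma$ projection $q$ annihilating $\Delta$ with $\psi(q)=1$, and then \cite[Lemma 2.4]{CT2023Henkin} to upgrade ``annihilates $\Delta$'' to ``annihilates $\AC(\Delta)$''. So the substantive work is deferred to that reference, and your task was essentially to reprove that external theorem.

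The genuine gap in your argument is this: you never produce even a single nonzero closed projection $q\le 1-r_\Delta$ with $\psi(q)>0$. Your finite-cover step yields closed projections $p_i$ with $\phi_i(p_i)$ \emph{small}, not zero, so there is no reason for any $p_i$ (or any combination of them) to lie below $1-r_\Delta$. The exhaustion argument you sketch then presupposes exactly what is missing: it begins by taking a supremum over type $F_\sigma$ projections $q\le 1-r_\Delta$, but you have not shown this set contains anything with positive $\psi$-mass. Worse, the contradiction step (``carry out the preliminary finite-cover construction inside the corner'') invokes a construction that was never completed in $B$ itself, and the corner $(1-q_\infty)B^{**}(1-q_\infty)$ is not of the form $C^{**}$ for a $\rC^*$-subalgebra $C\subset B$, so it is unclear what ``closed projection'' would even mean there or how the compactness argument transfers.

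The actual mechanism (as in \cite{CT2023Henkin}) is more delicate: one must exploit weak-$*$ closedness of $\Delta$ together with noncommutative Urysohn-type separation to get, for each $\eps>0$, a closed projection on which \emph{every} $\phi\in\Delta$ vanishes while $\psi$ is nearly full---not merely finitely many $\phi_i$ being small. Your compactness reduction to finitely many states is the right first move, but the passage from ``small on finitely many'' to ``zero on all'' is precisely the missing idea, and it cannot be recovered by the exhaustion trick as written.
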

\begin{proof}
By \cite[Theorem 3.7]{CT2023Henkin}, there a type $F_\sigma$ projection $q\in B^{**}$ that  annihilates $\Delta$  and satisfies $\psi(q)=1$. Furthermore, an application of \cite[Lemma 2.4]{CT2023Henkin}, shows that in fact $q$ annihilates $\AC(\Delta)$. \end{proof}

We say that $A$ has the \emph{Forelli property in $B$ with respect to $\Delta$} if 
for every type $F$ projection $q\in B^{**}$ lying in $\AC(\Delta)^{\perp}$,  there is a contractive net $(a_i)$ in $A$ such that $(q a_i )$ converges to $0$ in the weak-$*$ topology of $B^{**}$, and $(\phi(1-a_i))$ converges to $0$ for every $\phi\in \AC(\Delta)$.  If this property only holds for type $F_\sigma$ projections, then we say instead that $A$ has the \emph{$\sigma$-Forelli property in $B$ with respect to $\Delta$}. If this property only holds for type $F_0$ projections, then we say instead that $A$ has the \emph{finite Forelli property in $B$ with respect to $\Delta$}. 

These properties are inspired by a result of Forelli (see for instance \cite[Lemma 9.5.5]{rudin2008}), which explains our choice of terminology. We now arrive at another main result of the paper, which connects nullity of the Riesz projection with the various properties introduced above.

\begin{theorem}\label{T:ForelliFM}
Let $B$ be a unital $\rC^*$-algebra and let $A\subset B$ be a unital norm-closed subalgebra. Let $\Delta$ be a norm-closed convex subset of the state space of $B$. Consider the following statements.
\begin{enumerate}[{\rm (i)}]
\item The projection $1-r_\Delta$ is $A$-null.
\item $A$ has the strong F.\&M. Riesz property in $B$ with respect to $\Delta$.
\item All projections in $\AC(\Delta)^\perp$ are $A$-null.
\item All closed projections in $\AC(\Delta)^\perp$ are $A$-null.
\item Let $p,q\in B^{**}$ be closed projections. If $p\in \AC(\Delta)^\perp$, then $pq\in A^{\perp\perp}$. 
\item All closed projections in $\AC(\Delta)^\perp$ are dominated by a projection in $\AC(\Delta)^\perp\cap A^{\perp\perp}$.
\item $A$ has the Forelli property in $B$ with respect to $\Delta$.
\item $A$ has the $\sigma$-Forelli property in $B$ with respect to $\Delta$.
\item $A$ has the finite Forelli property in $B$ with respect to $\Delta$.
\item $A$ has the F.\&M. Riesz property in $B$ with respect to $\Delta$.
\item The projection $1-r_\Delta$ lies in $A^{\perp\perp}$.
\end{enumerate}
Then,
\[
\begin{array}{ccccccccccccccc}
{\rm (i)} & \Leftrightarrow & {\rm (ii)}  &\Leftrightarrow & {\rm (iii)} & \Rightarrow & {\rm (iv)} &\Leftrightarrow & {\rm (v)} &&&&&&\\
&&&&&&&&\Downarrow &&&&&& \\
&&&&{\rm (xi)}&\Leftrightarrow &{\rm (x)}& \Rightarrow & {\rm (vi)} & \Rightarrow & {\rm (vii)} \Rightarrow & {\rm (viii)} \Rightarrow & {\rm (ix)} 
\end{array}
\]
If we assume in addition that $\Delta$ is weak-$*$ closed in $B^*$, then 
\[
\begin{array}{ccccccccccc}
&&{\rm (i)} & \Leftrightarrow & {\rm (ii)}  &\Leftrightarrow & {\rm (iii)} & \Leftrightarrow & {\rm (iv)} &\Leftrightarrow & {\rm (v)} \\
&&&&&&&&&&\Downarrow \\
{\rm (xi)}&\Leftrightarrow &{\rm (x)}&\Leftrightarrow&{\rm (ix)}&\Leftrightarrow &{\rm (viii)}&\Leftrightarrow&{\rm (vii)} & \Leftrightarrow & {\rm (vi)}
\end{array}
\]
\end{theorem}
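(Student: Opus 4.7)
The plan is to dispatch the many implications in clusters by identifying the underlying mechanisms, leaving two substantive steps that I would attack with net-construction arguments. First I would settle the ``upper cluster'' of essentially formal equivalences: (i) $\Leftrightarrow$ (ii) is Lemma~\ref{L:SFMRiesz}; (ii) $\Leftrightarrow$ (iii) follows from the identification $\AC(\Delta)^\perp = (1-r_\Delta)B^{**}$, so a projection lies there iff it is dominated by $1-r_\Delta$, reducing nullity of all such projections to nullity of $1-r_\Delta$ alone; (iii) $\Rightarrow$ (iv) is vacuous; (iv) $\Leftrightarrow$ (v) is Proposition~\ref{P:nullproj} applied with $M=A$, using that closed and compact projections coincide in unital $B$; and (v) $\Rightarrow$ (vi) by taking $q=1$, so a closed $p \in \AC(\Delta)^\perp$ is itself an element of $A^{\perp\perp}\cap \AC(\Delta)^\perp$ that dominates itself. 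Lemma~\ref{L:FMRiesz} gives (x) $\Leftrightarrow$ (xi); then (xi) $\Rightarrow$ (vi) by using $1-r_\Delta$ as a universal dominator in $A^{\perp\perp}\cap\AC(\Delta)^\perp$. The chain (vii) $\Rightarrow$ (viii) $\Rightarrow$ (ix) is immediate from the inclusion of projection types.

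The genuine difficulty in the first diagram is (vi) $\Rightarrow$ (vii). Given a type~$F$ projection $q = \bigvee_\alpha q_\alpha$ in $\AC(\Delta)^\perp$ with closed $q_\alpha$, condition (vi) provides for each $\alpha$ a projection $e_\alpha \in A^{\perp\perp}\cap \AC(\Delta)^\perp$ dominating $q_\alpha$, and Goldstine's lemma furnishes contractive nets in $A$ converging weak-$*$ to $1-e_\alpha$. The task is to amalgamate these nets into a single contractive net $(a_i)$ in $A$ with $qa_i \to 0$ weak-$*$ and $\phi(1-a_i)\to 0$ for every $\phi\in\AC(\Delta)$. My plan is to index by triples $(F,\Phi,\varepsilon)$, where $F$ is a finite set of indices, $\Phi$ a finite subset of $\AC(\Delta)\cup B^*$, and $\varepsilon>0$, and produce a contraction $a\in A$ satisfying $|\phi(1-a)|<\varepsilon$ for $\phi\in\Phi\cap\AC(\Delta)$ together with $|\psi(q_\alpha a)|<\varepsilon$ for $\alpha\in F$ and $\psi\in\Phi\cap B^*$. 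Existence of such an $a$ should follow from a Hahn--Banach separation argument in the unit ball of $A$: for each $\alpha$, the contraction $1-e_\alpha$ lies in the weak-$*$ closure of contractions from $A$, and one would finitely mix these approximations to straddle the $\alpha\in F$ simultaneously. This is where I expect the main obstacle, since $A^{\perp\perp}$ need not be closed under finite joins of projections and so passing from the pointwise domination in (vi) to a ``finite-$F$'' domination requires an auxiliary convexity step.

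For the weak-$*$ closed case, closing the loop requires proving (ix) $\Rightarrow$ (x), for which Lemma~\ref{L:Rainwater} is indispensable. Given $\phi \in A^\perp$, decompose $\phi = \phi_a + \phi_s$ with $\phi_a=\phi(r_\Delta\cdot)\in\AC(\Delta)$ and $\phi_s=\phi((1-r_\Delta)\cdot)\in\SG(\Delta)$, and apply Lemma~\ref{L:Rainwater} to the normalized $|\phi_s|$ to obtain a type-$F_\sigma$ projection $q$ in $\AC(\Delta)^\perp$ with $|\phi_s|(q)=\|\phi_s\|$; then the support of $|\phi_s|$ lies below $q$, whence $\phi_s = \phi_s(q\,\cdot)$. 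I would first bootstrap finite Forelli to $\sigma$-Forelli by a diagonal approximation, applying finite Forelli to the increasing finite joins $q_1\vee\cdots\vee q_n$ and extracting a diagonal subnet controlled against finitely many test functionals in $\AC(\Delta)$. The resulting net $(a_i)$ applied to $q$ then satisfies $qa_i\to 0$ weak-$*$, forcing $\phi_s(a_i) = \phi_s(qa_i)\to 0$, while $\phi_a(1-a_i)\to 0$ gives $\phi_a(a_i)\to \phi_a(1) = \phi(r_\Delta)$; combining with $\phi(a_i)=0$ yields $\phi(r_\Delta)=0$, i.e., $r_\Delta \in A^{\perp\perp}$. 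This closes the diagram under the weak-$*$ closure hypothesis. The diagonal bootstrap is technically more straightforward than the net amalgamation in (vi) $\Rightarrow$ (vii), so the latter remains the principal obstacle of the entire argument.
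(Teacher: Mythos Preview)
Your handling of the formal cluster (i)--(v), (x)$\Leftrightarrow$(xi), and (xi)$\Rightarrow$(vi) matches the paper. However, there are two genuine issues.

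\textbf{On (vi)$\Rightarrow$(vii): a false obstacle.} Your stated ``principal obstacle'' --- that $A^{\perp\perp}$ need not be closed under finite joins of projections --- is incorrect, and this is precisely what makes the step easy rather than hard. Since $A^{\perp\perp}$ is a unital weak-$*$ closed subalgebra of $B^{**}$, it is closed under arbitrary suprema of its projections: for projections $e,f\in A^{\perp\perp}$ one has $((1-e)(1-f)(1-e))^n\to (1-e)\wedge(1-f)$ in SOT, hence weak-$*$, so $e\vee f\in A^{\perp\perp}$; arbitrary joins then follow by taking the increasing weak-$*$ limit of finite joins. The paper's argument is therefore a one-liner: with $q=\bigvee_{p\in\F}p$ and the dominating projections $r_p\in A^{\perp\perp}\cap\AC(\Delta)^\perp$ from (vi), set $q'=\bigvee_{p\in\F}r_p\in A^{\perp\perp}\cap\AC(\Delta)^\perp$, pick by Goldstine a contractive net $(a_i)$ in $A$ converging weak-$*$ to $1-q'$, and observe $qa_i\to q(1-q')=0$ while $\phi(1-a_i)\to\phi(q')=0$ for $\phi\in\AC(\Delta)$. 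No finite mixing, no Hahn--Banach separation, no indexing by triples is needed.

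\textbf{A missing implication.} In the weak-$*$ closed case you only close the \emph{lower} equivalence class (vi)--(xi) via (ix)$\Rightarrow$(xi). You never close the \emph{upper} class: you establish (i)$\Leftrightarrow$(ii)$\Leftrightarrow$(iii)$\Rightarrow$(iv)$\Leftrightarrow$(v) but give no route back from (iv) to (i)/(ii)/(iii). The paper supplies (iv)$\Rightarrow$(ii) by a second use of Lemma~\ref{L:Rainwater}: given $\phi\in A^\perp$ with singular part $\psi=\phi((1-r_\Delta)\cdot)\neq 0$, one already knows (iv)$\Rightarrow$(x) via the lower chain, so $\psi\in A^\perp$; Rainwater then produces a type $F_\sigma$ projection $q=\bigvee_{p\in\F}p$ in $\AC(\Delta)^\perp$ with $|\psi|(q)=1$, while (iv) forces each closed $p\in\F$ to be $A$-null, hence $q$ is $A$-null, giving $|\psi|(q)=0$ --- a contradiction. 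This step is essential and is absent from your plan.

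As a minor remark, your route (ix)$\Rightarrow$(xi) via bootstrapping to $\sigma$-Forelli is valid but more circuitous than the paper's: rather than upgrading (ix) to (viii), the paper approximates the $F_\sigma$ projection $r$ from Rainwater by a type $F_0$ subprojection $q\leq r$ with $|\psi|(1-q)<\eps$, applies finite Forelli once to $q$ with the test functional $\theta(\cdot\,a_0)\in\AC(\Delta)$, and estimates $|\theta(a_0)|$ directly via $\phi(aa_0)=0$.
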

\begin{proof}
(iii) $\Leftrightarrow$ (i): This is trivial since $1-r_\Delta\in \AC(\Delta)^\perp$.

(i) $\Leftrightarrow$ (ii): This is Lemma \ref{L:SFMRiesz}.

(i) $\Rightarrow$ (iii): Any projection lying in $\AC(\Delta)^\perp$ is dominated by $1-r_\Delta$, and hence  is $A$-null.

(iii) $\Rightarrow$ (iv): This is trivial.

(iv) $\Rightarrow$ (v): Let $p,q\in B^{**}$ be closed projections such that $p\in \AC(\Delta)^\perp$. By assumption, $p$ is $A$-null, so that $\phi(pq)=0$ for every $\phi\in A^\perp$ by Lemma \ref{L:stateineq}. This means that $pq\in A^{\perp\perp}$.

(v) $\Rightarrow$ (iv): Apply Proposition \ref{P:nullproj}.

(v) $\Rightarrow$ (vi): Take $q=1$.

(vi) $\Rightarrow$ (vii): Let $q\in B^{**}$ be a type $F$ projection in $\AC(\Delta)^{\perp}$. Thus, there is a collection $\F\subset  B^{**}$ of closed projections such that $q=\bigvee_{p\in \F}p$. Since $q\in\AC(\Delta)^{\perp}$, we see that  $q\leq 1-r_\Delta$.  Hence $p\leq 1-r_\Delta$ and  so $p\in \AC(\Delta)^\perp$ for each $p\in \F$. By assumption, for each $p\in \F$ there is a projection $r_p\in \AC(\Delta)^\perp\cap A^{\perp\perp}$ such that $p\leq r_p$. Put $q'=\bigvee_{p\in \F}r_p$, which still lies in
 $A^{\perp\perp}$.  Furthermore, since $r_p\in \AC(\Delta)^\perp$, we find $r_p\leq 1-r_\Delta$ for each $p\in F$. Thus, $q'\leq 1-r_\Delta$ and $q'\in \AC(\Delta)^\perp$. Now, by virtue of Goldstine's lemma, because $1-q'\in A^{\perp\perp}$,  we may find a contractive net $(a_i)$ in $A$ converging to $1-q'$ in the weak-$*$ topology of $B^{**}$. In particular, because $q\leq q'$, this means that $(qa_i)$ converges to $0$ in the weak-$*$ topology of $B^{**}$. Further, if $\phi\in \AC(\Delta)$, then since $q'\in \AC(\Delta)^\perp$ we have $\phi(q')=0$ and
\[
\lim_i \phi(1-a_i)=\phi(q')=0.
\]

(vii) $\Rightarrow$ (viii) $\Rightarrow$ (ix): This is trivial.

(x) $\Leftrightarrow$ (xi): This is a direct consequence of Lemma \ref{L:FMRiesz} since $A$ is unital.

(xi) $\Rightarrow$ (vi): If $p\in \AC(\Delta)^\perp$ is any projection, then $p\leq 1-r_\Delta$, and $1-r_\Delta\in \AC(\Delta)^\perp\cap A^{\perp\perp}$ by assumption.

For the rest of the proof, we assume that $\Delta$ is closed in the weak-$*$ topology of $B^*$.

(ix) $\Rightarrow$ (x): Fix a bounded linear functional $\phi:B\to \bC$ annihilating $A$. Put $\psi=\phi((1-r_\Delta)\cdot)$ and $\theta=\phi(r_\Delta \cdot)$, so that $\phi=\theta+\psi$ with $\theta\in \AC(\Delta)$ and $\psi\in \SG(\Delta)$. The goal is to show that $\theta$ and $\psi$ both annihilate $A$ as well, for which it clearly suffices to show that $\theta$ annihilates $A$.  Fix $a_0\in A$. 
In showing that $\theta(a_0)=0$, we may assume that $|\psi|$ is a state, upon scaling if necessary. Note that $|\psi|\in \SG(\Delta)$ by Lemma \ref{L:SG}.
By Lemma \ref{L:Rainwater}, there is a type $F_\sigma$ projection
$r\in B^{**}$ annihilating $\AC(\Delta)$ such that 
$
|\psi|(r) = 1.
$ 
Given $\eps>0$, we may thus find a type $F_0$ projection $q\leq r$ such that $|\psi|(1-q)<\eps$. In particular, $q$ still annihilates $\AC(\Delta)$. Applying  Lemma \ref{L:stateineq}, we obtain
\begin{equation}\label{Eq:Forelli1}
|\psi((1-q)x)|< \|x\| \sqrt{\eps}, \quad x\in B^{**}.
\end{equation}
On the other hand, using the fact that $\theta\in \AC(\Delta)$ along with \cite[Lemma 2.5]{CT2023Henkin} we see that $\theta(\cdot a_0)\in \AC(\Delta)$ as well. 
Using the  finite Forelli property, we find  $a \in A$ with $\|a\|\leq 1$ such that 
\begin{equation}\label{Eq:Forelli2}
|\theta((1-a)a_0)|<\eps
\end{equation}
and $|\psi(qaa_0)|<\eps$. In light of \eqref{Eq:Forelli1}, this last inequality implies
\[
|\psi(aa_0)| <\|a_0\|\sqrt{\eps}+\eps.
\]
In turn, we have $\psi=\phi-\theta$ and  $\phi\in A^\perp$, so that
\begin{equation}\label{Eq:Forelli3}
|\theta(aa_0)|=|\psi(aa_0)|<\|a_0\|\sqrt{\eps}+\eps.
\end{equation}
Combining \eqref{Eq:Forelli2} and \eqref{Eq:Forelli3}, we find $|\theta(a_0)|<\|a_0\|\sqrt{\eps}+2\eps$.
 Letting $\eps\to 0$, we see that $\theta(a_0)=0$ as desired.

(iv) $\Rightarrow$ (ii): Let $\phi\in A^\perp$ and put $\psi=\phi((1-r_\Delta)\cdot)$. We must show that $\psi=0$. Assume towards a contradiction that $\psi\neq 0$. Then, $|\psi|$ lies in $\SG(\Delta)$, and upon scaling if necessary, we may assume $|\psi|$ is a state.  By Lemma \ref{L:Rainwater}, there is a collection $\F\subset B^{**}$ of closed projections such that the projection $q=\bigvee_{p\in \F} p$ annihilates $\AC(\Delta)$ and satisfies $|\psi|(q)=1$. As before in the proof of (vii), 
 it is readily seen that $p\in \AC(\Delta)^{\perp}$ for every $p\in \F$. The assumption then shows that $p$ is $A$-null for each $p\in \F$, and 
 thus $q$ is $A$-null. On the other hand, we already know that (iv)$\Rightarrow$(x)$\Leftrightarrow$(xi), so $\psi\in A^\perp$ and $|\psi|(q)=0$, which is absurd.
\end{proof}

The proof of (ix) $\Rightarrow$ (x) above is inspired by that of \cite[Theorem 9.5.6]{rudin2008}.   

\medskip

{\bf Remark.} The reader may notice that there is a notable omission in the long list in the statement of the last theorem, namely that all closed projections in AC$(\Delta)^\perp$ are in $A^{\perp \perp}$.  This seems to be a very interesting condition.    It is equivalent to (iv) in the commutative case, but we are not sure about the general case.  
Indeed if $B = \rC(X)$ for some compact Hausdorff space $X$,
then 
this condition is simply saying that a closed set  $E$ in $X$ is a generalized $A$-peak set if each of its closed subsets has $\mu$-measure zero  for every probability measure $\mu$ in $\Delta$.  
That is, a closed set  $E\subset X$ is a generalized $A$-peak set if  
$| \mu | (E) = 0$ 
for every probability measure $\mu$ in $\Delta$.  This clearly implies that 
every closed subset of such a set $E$ is a generalized peak set, which implies by Proposition \ref{P:nullproj} or the discussion above it that $\chi_E$ is $A$-null. 
 That is,
$E$ is a peak-interpolation set.

When $\Delta = \{ \mu \}$ for some regular Borel probability measure $\mu$ on $X$,  and every closed set of $\mu$-measure zero is a peak set for $A$,  it follows that every closed set of $\mu$-measure zero is $A$-null.  This was showed to us by Alexander Izzo, and it inspired several of the results in Section 2.

\section{Existence of Lebesgue decompositions}\label{S:Leb}

In this section, we apply some of the ideas from Theorem \ref{T:ForelliFM} to prove the existence of certain decompositions for the dual space of an operator algebra.

\subsection{Von Neumann algebraic preliminaries}
We begin with some general facts about von Neumann algebras that will be needed throughout the remainder of the paper.

Given a von Neumann algebra $M$, we let $M_*\subset M^*$ denote the norm-closed subspace of weak-$*$ continuous linear functionals on $M$.  The following is actually a general principle for dual Banach spaces
that is well known in certain quarters.   It is a necessary and sufficient condition for a kind of `Kaplansky density'. 

\begin{lemma}\label{L:Henkinclosed}
Let $M$ be a von Neumann algebra. Let $A\subset M$ be a subspace and let $W\subset M$ denote its weak-$*$ closure. Let $\rho:M_*\to A^*$ denote the restriction map. Then, $\rho$ has closed range if and only if there is a constant $r>0$ such that the closed unit ball of $W$ is contained in the weak-$*$ closure of $\{a\in A:\|a\|\leq r\}$.
\end{lemma}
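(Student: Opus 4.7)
The plan is to factor $\rho$ through its kernel and then translate the closed-range condition into the geometric statement of the lemma via the bipolar theorem. Since any weak-$*$ continuous functional on $M$ vanishing on $A$ also vanishes on its weak-$*$ closure $W$, we have $\ker\rho = W_\perp$, the preannihilator of $W$ in $M_*$. Hence $\rho$ factors as the quotient map $M_* \to M_*/W_\perp$ followed by an injective bounded linear map $\bar\rho\colon M_*/W_\perp \to A^*$. The standard identification $M_*/W_\perp \cong W_*$, under which $W$ is a dual space with predual $M_*/W_\perp$, lets us view $\bar\rho\colon W_* \to A^*$ as the restriction map induced by the inclusion $A\subset W$.

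Since $\rho$ and $\bar\rho$ share the same image, $\rho$ has closed range iff $\bar\rho$ has closed range, which by the open mapping theorem (and the injectivity of $\bar\rho$) is equivalent to $\bar\rho$ being bounded below: there exists $c>0$ with $\|\bar\rho(\psi)\|_{A^*}\geq c\|\psi\|_{W_*}$ for every $\psi\in W_*$. I would then show that this condition with constant $c$ is equivalent to the closed unit ball $B_W$ of $W$ being contained in $\overline{B_r}^{w*}$, where $B_r=\{a\in A:\|a\|\leq r\}$ and $r=1/c$.

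This final translation is a direct bipolar argument in the dual pair $(W,W_*)$. For any $\psi\in W_*$, one has $\sup_{a\in B_r}|\psi(a)|=r\|\bar\rho(\psi)\|_{A^*}$ and $\|\psi\|_{W_*}=\sup_{w\in B_W}|\psi(w)|$. With $r=1/c$, the bounded-below inequality is therefore equivalent to
\[
\sup_{w\in B_W}|\psi(w)|\leq \sup_{a\in B_r}|\psi(a)|\qquad\text{for every } \psi\in W_*.
\]
Because $B_r$ is absolutely convex and contains $0$, the bipolar theorem identifies its weak-$*$ closure in $W$ with the set $\{w\in W:|\psi(w)|\leq \sup_{a\in B_r}|\psi(a)|\text{ for all }\psi\in W_*\}$. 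Hence the displayed inequality holds for every $\psi\in W_*$ precisely when $B_W\subset\overline{B_r}^{w*}$, giving the desired equivalence.

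This is a clean application of standard Banach space functional analysis, so I do not foresee a major obstacle. The only subtlety to be careful about is the chain of duality identifications: distinguishing the preannihilator $W_\perp\subset M_*$ from the annihilator in $M^*$, and observing that the subspace weak-$*$ topology $W$ inherits from $(M,M_*)$ agrees with the intrinsic weak-$*$ topology of $W$ as the dual of $W_*=M_*/W_\perp$, so that ``weak-$*$ closure in $W$'' is unambiguous throughout the argument.
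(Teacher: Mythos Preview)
Your proposal is correct and follows essentially the same approach as the paper: both use the open mapping theorem together with the identification $W_*\cong M_*/W_\perp$ and a convex separation/bipolar argument. Your presentation is slightly cleaner in that you factor $\rho$ through the quotient $M_*\to M_*/W_\perp$ up front and then run a single ``bounded below $\Leftrightarrow$ ball containment'' equivalence, whereas the paper treats the two implications separately (picking up an inessential factor of $2$ in the converse direction); the mathematical content is the same.
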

\begin{proof}
Assume first that $\rho$ has closed range.  By the open mapping theorem,  there is $r>0$ such that for every $\omega\in M_*$, there is $\omega'\in M_*$ with $\rho(\omega)=\rho(\omega')$ and  $\|\omega'\|\leq r\|\rho(\omega)\|$. Then, for each $\omega\in M_*$ we see that
\begin{align*}
\sup\{|\omega(a)|:a\in A,\|a\|\leq r\}&=r \|\rho(\omega)\|\geq \|\omega'\|\geq \|\omega'|_W\|=\|\omega|_W\|\\
&=\sup\{|\omega(a)|:a\in W, \|a\|\leq 1\}.
\end{align*}
By the convex separation theorem, we conclude that the weak-$*$ closure of $\{a\in A:\|a\|\leq r\}$ contains the closed unit ball of $W$.

Conversely, assume that there is $r>0$ such that the closed unit ball of $W$ is contained in the weak-$*$ closure of $\{a\in A:\|a\|\leq r\}$. Let $\omega\in M_*$. It follows that
\[
r\|\rho(\omega)\|=\sup\{|\omega(a)|:a\in A,\|a\|\leq r\}\geq \|\omega|_W\|.
\]
On the other hand, the predual of $W$ is isometrically isomorphic to $M_*/W_\perp$, so that there is $\omega'\in M_*$ such that $\omega'|_W=\omega|_W$ and $\|\omega'\|\leq 2 \|\omega|_W\|$. In particular, we find $\rho(\omega')=\rho(\omega)$ and $\|\omega'\|\leq 2r\|\rho(\omega)\|$, so the range of $\omega$ is closed.
\end{proof}

There is a central projection $z\in M^{**}$ such that $M_*=M^* z$ \cite[Theorem III.2.14]{takesaki2002}. An important fact is that there is a normal $*$-isomorphism $\Omega: M\to M^{**}z$ such that $\Omega(a)=az$ for every $a\in M$. Next, let $\phi$ be a normal state on $M$,
and let $\widehat\phi:M^{**}\to\bC$ denote its unique normal extension. Consider the weak-$*$ closed left ideals
\[
L_\phi=\{a\in M:\phi(a^*a)=0\} \qand L_{\widehat\phi}=\{x\in M^{**}:\widehat\phi(x^*x)=0\}.
\]
Then, it is readily verified that $\phi=\widehat\phi\circ \Omega$ and that
$
L_{\widehat\phi}=\Omega(L_\phi)\oplus M^{**}(1-z).
$
In particular, this gives a simple relationship between what we call the support projection $s_\phi$  of $\phi$, and another notion of support found in the von Neumann algebras literature. 
Indeed, our convention in this paper is that
$
L_{\widehat\phi}=M^{**}(1-s_\phi).
$
On the other hand, there is a projection $e_\phi\in M$ such that $L_\phi=M(1-e_\phi)$. It follows that $s_\phi=ze_\phi$. Furthermore, given another normal state $\psi$ on $M$, we see that $L_\phi\subset L_\psi$ if and only if $L_{\widehat\phi}\subset L_{\widehat\psi}$, so that our notion of absolute continuity  coincides with the corresponding notion from von Neumann algebra theory for such states.

We say that a normal state $\phi$ is \emph{faithful} if $L_\phi=\{0\}$. We now give an alternative description of $z$.

\begin{lemma}\label{L:predualphi}
Let $M$ be a von Neumann algebra with a faithful normal state $\phi$. 
Then, $s_\phi=z$ and $M_*=\AC(\varphi)$.
\end{lemma}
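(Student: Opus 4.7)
The plan is to verify the two assertions separately, leaning on the identification $s_\phi = z e_\phi$ recorded in the paragraph preceding the lemma, together with the facts that the modulus of a normal functional is normal and that $z$ is central.

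\medskip

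\noindent\textbf{Step 1: $s_\phi = z$.} From the discussion just before the statement, we have $s_\phi = z e_\phi$, where $e_\phi \in M$ is the projection characterized by $L_\phi = M(1-e_\phi)$. Faithfulness of $\phi$ means $L_\phi = \{0\}$, hence $1 - e_\phi = 0$, i.e.\ $e_\phi = 1$, and so $s_\phi = z$.

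\medskip

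\noindent\textbf{Step 2: $M_* \subseteq \AC(\phi)$.} Let $\psi \in M_* = M^* z$, so that $\psi(\cdot) = \psi(\cdot\, z)$. Write $\psi = |\psi|(\cdot\, v)$ via the polar decomposition, with $v$ a partial isometry in $M^{**}$. Then $|\psi|(x) = \psi(x v^*)$, and since $\psi$ is weak-$*$ continuous on $M$, so is $|\psi|$. Consequently $|\psi| \in M_*$, which forces $|\psi|(1-z) = |\psi|((1-z) \cdot 1) = 0$. Hence $s_{|\psi|} \leq z = s_\phi$, and by definition $\psi$ is absolutely continuous with respect to $\phi$.

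\medskip

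\noindent\textbf{Step 3: $\AC(\phi) \subseteq M_*$.} Let $\psi \in \AC(\phi)$, so that $s_{|\psi|} \leq s_\phi = z$, and therefore $|\psi|(1-z) = 0$. Applying Lemma \ref{L:stateineq} to the projection $1-z$ yields $\psi((1-z) x) = 0$ for every $x \in M^{**}$. Since $z$ is central, we may rewrite $\psi(a) = \psi(z a) = \psi(a z)$ for every $a \in M$, which says precisely that $\psi \in M^* z = M_*$.

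\medskip

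The only nonroutine ingredient is the preservation of normality under the modulus (used in Step 2), which is a standard fact from the structure theory of preduals of von Neumann algebras; everything else is a direct unpacking of the definitions. I would present this as a short, essentially computational proof.
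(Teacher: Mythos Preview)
Your proof is correct and follows essentially the same approach as the paper; in particular, Steps 2 and 3 mirror the paper's argument almost verbatim (including the appeal to Lemma \ref{L:stateineq}). Your Step 1 is actually slightly more streamlined: you invoke the identity $s_\phi = z e_\phi$ recorded immediately before the lemma and read off $e_\phi = 1$ from faithfulness, whereas the paper re-derives both inequalities $z \geq s_\phi$ and $z \leq s_\phi$ directly---but this is a cosmetic difference, not a different route.
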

\begin{proof}  
Since $\phi$ is normal, we have $\phi(z)=1$ so $z\geq s_\phi$. Next, by the discussion above we have $L_{\widehat\phi}z=\Omega(L_\phi)=\{0\}$ since $\phi$ is faithful. In particular, $(1-s_\phi)z=0$, so that $z\leq s_\phi$.  Therefore, $z=s_\phi$.
It follows immediately from this that $M_* \subset  {\rm AC}(\phi)$.  Conversely,  if $\psi \in {\rm AC}(\phi)$ then $|\psi| (1-z) = 0$ since $\phi(1-z) = 0$, so indeed $\psi\in M_*=M^*z$ by Lemma \ref{L:stateineq}.
\end{proof}

More generally, we can describe $M_*$ in terms of absolute continuity, as follows.
We  denote by  $S_n(M)$ the set of normal states on $M$.  Assume now that $M$ contains a weak-$*$ dense unital $\rC^*$-subalgebra $B$. Let $\rho:M^*\to B^*$ denote the restriction map. By the Kaplansky density theorem we see that $\rho$ is isometric on $M_*$. In particular, this means that the sets $(M_*)|_B=\rho(M_*)$ and  $\rho( S_n(M))$ are norm-closed. Put $\Delta=\rho(S_n(M))$; this is the norm-closed convex set
 in $B^*$
 consisting of those elements of the form $\omega|_B$, where $\omega$ is a normal state on $M$. We now record a simple fact.

\begin{lemma}\label{L:restnormal}
We have that $\AC(S_n(M)))=M_*$ and $\AC(\Delta)=(M_*)|_B$.
\end{lemma}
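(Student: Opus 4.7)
The central device is the canonical weak-$*$ continuous unital $*$-homomorphism $\pi : B^{**} \to M$ extending the inclusion $B \hookrightarrow M$ (which exists since $M = (M_*)^*$ is a dual Banach space, and is a $*$-homomorphism by weak-$*$ density of $B$ and separate weak-$*$ continuity of the product on $B^{**}$).  Because $B$ is weak-$*$ dense in $M$, Krein--Smulian together with the Banach--Alaoglu theorem forces $\pi$ to be surjective, so $\ker \pi = (1-r)B^{**}$ for a central projection $r \in B^{**}$ and the restriction $\pi|_{rB^{**}}$ is a normal $*$-isomorphism onto $M$.  For any $\phi \in M^*$, the functional $\phi \circ \pi$ is the unique weak-$*$ continuous extension of $\phi|_B$ to $B^{**}$, so $\widehat{\phi|_B} = \phi \circ \pi$.

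For part (1), the inclusion $M_* \subseteq \AC(S_n(M))$ is immediate from the standard fact that the modulus of a normal functional is normal: each $\phi \in M_*$ is trivially AC with respect to $|\phi|/\|\phi\| \in S_n(M)$.  Conversely, if $\phi$ is AC with respect to some $\omega \in S_n(M)$, then $s_{|\phi|} \le s_\omega = z e_\omega \le z$ by the discussion preceding the lemma, whence $|\phi|(1-z) = 0$ and Lemma~\ref{L:stateineq} gives $\phi = \phi(\cdot z) \in M^* z = M_*$.

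For part (2), the plan is to show that both $(M_*)|_B$ and $\AC(\Delta)$ coincide with $((1-r)B^{**})_\perp$.  The identification $(M_*)|_B = ((1-r)B^{**})_\perp$ follows by factoring:  $\psi = \phi|_B$ for some $\phi \in M_*$ iff $\widehat\psi = \phi \circ \pi$ vanishes on $\ker \pi$, and in the converse direction the factored functional $\widehat\psi|_{rB^{**}} \circ (\pi|_{rB^{**}})^{-1}$ is weak-$*$ continuous on $M$, hence lies in $M_*$.  For the identification involving $\AC(\Delta)$:  when $\tau = \omega|_B$ with $\omega \in S_n(M)$, the extension $\widehat\tau = \omega \circ \pi$ is positive and its support in $B^{**}$ equals $(\pi|_{rB^{**}})^{-1}(e_\omega) \le r$ (smallest projection $p \in B^{**}$ with $\pi(p) \ge e_\omega$), so Lemma~\ref{L:stateineq} implies that every functional AC with respect to $\tau$ annihilates $(1-r)B^{**}$.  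Conversely, if $\widehat\psi$ vanishes on $(1-r)B^{**}$, the first identification writes $\psi = \phi|_B$ for some $\phi \in M_*$; part~(1) produces $\omega \in S_n(M)$ with $\phi$ AC relative to $\omega$, and it remains to transfer this AC relation from $M$ to $B$.

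This last AC transfer is the main obstacle.  Since $\phi$ is normal, its polar decomposition $\phi = |\phi|(\cdot v)$ holds inside $M$ with $v \in M$ a partial isometry satisfying $v^* v = e_{|\phi|}$; lifting via $\widetilde v = (\pi|_{rB^{**}})^{-1}(v) \in rB^{**}$ yields $\widehat\psi = (|\phi| \circ \pi)(\cdot \widetilde v)$, in which $|\phi| \circ \pi$ is positive and $\widetilde v^* \widetilde v = (\pi|_{rB^{**}})^{-1}(e_{|\phi|})$ coincides with the support of $|\phi| \circ \pi$ in $B^{**}$ (by the same computation used for $\widehat\tau$).  Uniqueness of the polar decomposition of $\widehat\psi$ in $(B^{**})^* = (B^{**})_*$ then forces $|\widehat\psi| = |\phi| \circ \pi$, and therefore $s_{|\widehat\psi|} = (\pi|_{rB^{**}})^{-1}(e_{|\phi|}) \le (\pi|_{rB^{**}})^{-1}(e_\omega) = s_{\widehat{\omega|_B}}$, which is exactly the assertion that $\psi$ is AC with respect to $\omega|_B \in \Delta$.
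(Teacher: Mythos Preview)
Your proof is correct, but it takes a substantially different route from the paper's for part~(2).

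For part~(1) the two arguments are essentially the same: both use that the modulus of a normal functional is normal for $M_*\subseteq\AC(S_n(M))$, and for the reverse inclusion the paper invokes \cite[Theorem~2.7]{CT2023Henkin} while you argue directly that $s_{|\phi|}\le s_\omega=ze_\omega\le z$ and conclude via Lemma~\ref{L:stateineq}.  Both are fine.

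For part~(2) the paper is much terser.  The inclusion $\AC(\Delta)\subseteq(M_*)|_B$ is obtained in one line from \cite[Theorem~2.7]{CT2023Henkin}: that result characterizes absolute continuity of $\phi$ with respect to $\psi$ as membership in the norm closure of $\{\psi(\cdot\,b):b\in B\}$, so the hypotheses that $\omega(\cdot\,a)\in M_*$ for $\omega\in M_*$ and that $(M_*)|_B$ is norm closed suffice.  The reverse inclusion the paper simply declares ``plainly'' follows from $M_*\subseteq\AC(S_n(M))$.  You instead introduce the canonical normal surjection $\pi:B^{**}\to M$ with kernel $(1-r)B^{**}$ and show that both $(M_*)|_B$ and $\AC(\Delta)$ coincide with $((1-r)B^{**})_\perp$.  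The real content in your argument is the transfer of absolute continuity from $M$ down to $B$: where the paper says ``plainly'', you lift the polar decomposition $\phi=|\phi|(\cdot\,v)$ through $(\pi|_{rB^{**}})^{-1}$ and invoke uniqueness of polar decomposition in $(B^{**})_*$ to identify $|\widehat{\phi|_B}|=|\phi|\circ\pi$, whence the supports match.

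What each buys: the paper's argument is far more economical but leans on the external characterization from \cite{CT2023Henkin}, and the word ``plainly'' hides exactly the step you carry out in full.  Your argument is self-contained and, as a bonus, identifies the Riesz projection $r_\Delta$ explicitly as the central cover $r$ of the inclusion $B\hookrightarrow M$, which is structurally informative even if not needed for the lemma itself.
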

\begin{proof}
Given $\omega\in M_*$ and $a\in M$, we have that $\omega(\cdot a)\in M_*$. Because both $M_*$ and $(M_*)|_B$ are norm-closed, we then find $\AC(S_n(M))\subset M_*$ and $\AC(\Delta)\subset (M_*)|_B$ by virtue of \cite[Theorem 2.7]{CT2023Henkin}.  Conversely, let $\phi\in M_*$ with $\|\phi\|=1$. We have $\phi(z\cdot)=\phi$. Now, there is a partial isometry $v\in M$ such that $|\phi|=\phi(\cdot v)$. It follows that $|\phi|(z\cdot )=|\phi|$, whence $|\phi|\in S_n(M)$ and $\phi\in \AC(S_n(M))$. We conclude that  $M_*\subset \AC(S_n(M))$, which in turn plainly implies $(M_*)|_B\subset \AC(\Delta)$.
\end{proof}

\subsection{Lebesgue decompositions}

Let $B$ be a unital $\rC^*$-algebra and let $A\subset B$ be a unital subalgebra. Given a subset $X\subset B^*$, we let $X|_A$ denote the set of restrictions to $A$ of functionals in $X$. Following \cite{CT2023Henkin}, we say that $X$ is a \emph{left band} if, whenever a functional $\phi\in B^*$ is absolutely continuous with respect to some element of $X$, then it must be that $\phi$ itself lies in $X$.

Next, let $M$ be a von Neumann algebra containing $B$ as a weak-$*$ dense unital $\rC^*$-subalgebra. 
We say that $A$ admits a \emph{Lebesgue decomposition} relative to $M$ if $(M_*)|_A$ is norm-closed and there is a  (necessarily unique) subspace $\Sigma\subset A^*$ such that $A^*=(M_*)|_A\oplus_1 \Sigma$.  

It follows from \cite[Lemma 3.4]{CH2025} that $B$ itself always admits a Lebesgue decomposition relative to $M$. By contrast, 
there is a sense in which the existence of such a decomposition for $A$ is somewhat rare (see \cite{CH2025} for more detail). Combining Lemma \ref{L:restnormal} with \cite[Theorem 2.4]{CH2025}, we see that  if $A$ has the F.\& M. Riesz property in $B$ with respect to the convex set of restrictions to $B$ of the normal states on $M$, then $A$ admits a Lebesgue decomposition relative to $M$ that is `compatible' (in a precise sense) with that of $B$. We can now give a different sufficient condition, where we don't insist on this compatibility condition.

\begin{corollary}\label{C:Lebdecomp}
Let $M$ be a von Neumann algebra. Let $B\subset M$ be a weak-$*$ dense unital $\rC^*$-subalgebra, and let $A\subset B$ be a unital commutative subalgebra. Let
\[
X=\{\psi\in B^*:\psi|_A\in (M_*)|_A\}.
\]
If  $X$ is a
norm-closed left band, then $A$ admits a Lebesgue decomposition relative to $M$.
\end{corollary}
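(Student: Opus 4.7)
The plan is to construct the Lebesgue decomposition of $A^*$ by restricting a band decomposition of $B^*$ that is produced by $X$. First, let $\Delta_X \subset B^*$ denote the set of states in $X$. Observe that $X$ is automatically a subspace, since $X = r^{-1}((M_*)|_A)$ where $r \colon B^* \to A^*$ is the restriction map and $(M_*)|_A$ is a subspace. If $X = \{0\}$ then any $\sigma \in (M_*)|_A$ would lift by Hahn-Banach to an element of $X$, so $(M_*)|_A = \{0\}$ and the conclusion is trivial. Otherwise, any nonzero $\phi \in X$ has $|\phi| \in X$ by the band property, since $|\phi|$ is absolutely continuous with respect to $\phi$, and normalization produces a state; thus $\Delta_X$ is a nonempty, norm-closed, convex subset of the state space of $B$. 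I would then apply \cite[Theorem 3.5]{CT2023Henkin} to obtain $B^* = \AC(\Delta_X) \oplus_1 \SG(\Delta_X)$, and set $X^s := \SG(\Delta_X)$.

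The key step is to prove $X = \AC(\Delta_X)$. The inclusion $\AC(\Delta_X) \subset X$ is immediate from the band property. For the reverse, given $\phi \in X$, decompose $\phi = \phi^{ac} + \phi^{s}$ with $\phi^{ac} \in \AC(\Delta_X)$ and $\phi^{s} \in X^s$. Since $X$ is a subspace containing $\phi^{ac}$, also $\phi^{s} \in X$, and then $|\phi^{s}| \in X$ by the band property. If $|\phi^{s}|$ were nonzero, its normalization would be a state lying in both $\Delta_X$ and $\SG(\Delta_X)$, which would be singular to itself---impossible. Hence $\phi^{s} = 0$ and $\phi \in \AC(\Delta_X)$.

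Next, transfer the decomposition to $A$. I would verify that $r(X) = (M_*)|_A$: the inclusion $\subset$ holds by the definition of $X$, and conversely any $\sigma = \omega|_A$ with $\omega \in M_*$ equals $r(\omega|_B)$ with $\omega|_B \in X$. Set $\Sigma := r(X^s)$. For any $\sigma \in A^*$, extend by Hahn-Banach to $\psi \in B^*$ with $\|\psi\| = \|\sigma\|$ and decompose $\psi = \psi_X + \psi^{s}$, so that $\sigma = r(\psi_X) + r(\psi^{s}) \in (M_*)|_A + \Sigma$. The sum is direct: if $\sigma \in (M_*)|_A \cap \Sigma$ with $\sigma = r(\psi^{s})$ for some $\psi^{s} \in X^s$, then $\psi^{s}|_A \in (M_*)|_A$ forces $\psi^{s} \in X \cap X^s = \{0\}$, so $\sigma = 0$.

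Finally, for the $\ell^1$-identity, the uniqueness above combined with the chosen extension yields $\|\sigma\| = \|\psi\| = \|\psi_X\| + \|\psi^{s}\| \geq \|\sigma_1\| + \|\sigma_2\|$ for the unique decomposition $\sigma = \sigma_1 + \sigma_2$ with $\sigma_1 \in (M_*)|_A$ and $\sigma_2 \in \Sigma$; the triangle inequality gives the reverse. This $\oplus_1$ property together with the uniqueness automatically forces $(M_*)|_A$ to be norm-closed, completing the Lebesgue decomposition. The main obstacle is the identification $X = \AC(\Delta_X)$, which is precisely where the left band hypothesis and the norm-closedness of $X$ do essential work; every other step reduces to routine manipulations with Hahn-Banach extensions and the band decomposition of $B^*$.
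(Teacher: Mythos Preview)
Your proof is correct and takes a genuinely different route from the paper's. Both arguments begin the same way: they identify $X$ with $\AC(\Gamma)$ for a suitable norm-closed convex set of states (your $\Delta_X$ coincides with the paper's $\Gamma$ up to harmless differences), using the left band hypothesis exactly as you do. From there the paths diverge. The paper observes that $A^\perp \subset X = \AC(\Gamma)$, so $A$ has the strong F.\&M.\ Riesz property with respect to $\Gamma$; by Theorem~\ref{T:ForelliFM} this forces the Riesz projection $r_\Gamma$ into $A^{\perp\perp}$, and then the \emph{commutativity} of $A$ is invoked to make $r_\Gamma$ central in $A^{**}$, yielding $A^* = (A^{**}r_\Gamma)_\perp \oplus_1 (A^{**}(1-r_\Gamma))_\perp$. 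A final identification shows the first summand is $(M_*)|_A$.

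You instead keep the $\oplus_1$ decomposition upstairs in $B^*$ (where the Riesz projection is automatically central in $B^{**}$, so the $\ell^1$-splitting is available without any hypothesis on $A$) and push it down via restriction, using a norm-preserving Hahn--Banach extension to certify the $\ell^1$-identity on $A^*$. This is cleaner and, notably, never uses that $A$ is commutative; your argument therefore proves a strictly more general statement. What the paper's route buys is a tighter connection to the machinery developed earlier (the F.\&M.\ Riesz hierarchy of Theorem~\ref{T:ForelliFM}), and an explicit description of both summands as preannihilators of corners of $A^{**}$. One minor wording issue: in your trivial case $X=\{0\}$, the extension witnessing $\sigma \in (M_*)|_A$ should be $\omega|_B$ for the given $\omega \in M_*$, not an arbitrary Hahn--Banach lift; this is what actually lands in $X$ by definition.
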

\begin{proof}
Let $\Gamma\subset B^*$ denote the norm-closure of the convex hull of $\{|\psi|:\psi\in X, \|\psi\|=1\}$.  Let $r_\Gamma\in B^{**}$ denote the corresponding Riesz projection \cite[Theorem 3.5]{CT2023Henkin}. By assumption, $X$ is norm-closed and a left band, so we see that $X=\AC(\Gamma)$. On the other hand, it is clear that $A^\perp\subset X$, so in fact $A^\perp \subset \AC(\Gamma)$. We conclude that $A$ has the strong F.\& M. Riesz property in $B$ with respect to $\Gamma$. Hence, Theorem \ref{T:ForelliFM} implies that  $r_\Gamma\in A^{\perp\perp}$. In particular, $r_\Gamma$ commutes with $A$ since $A$ is assumed to be commutative. It thus follows that $A^*=(A^{**} r_\Gamma)_\perp \oplus_1 (A^{**}(1-r_\Gamma))_\perp$. Hence, it suffices to check that $(A^{**}(1-r_\Gamma))_\perp=(M_*)|_A$.

Let $\phi\in (M_*)|_A$ and choose $\omega\in M_*$ such that $\omega|_A=\phi$. Plainly, 
$\omega|
_{B} \in X= \AC(\Gamma)$, so by construction of the Riesz projection, we have $\omega((1-r_\Gamma)\cdot)=0$. Using that $r_\Gamma\in A^{\perp\perp}$, we find
\[
\phi((1-r_\Gamma)a)=\omega((1-r_\Gamma)a)=0, \quad a\in A
\]
so that $\phi\in (A^{**}(1-r_\Gamma))_\perp$. Conversely let $\phi\in A^*$ such that $\phi((1-r_\Gamma)\cdot)=0$. By the Hahn--Banach theorem, there is  $\psi\in B^*$  such that $\psi|_A=\phi$. Since $r_\Gamma\in A^{\perp\perp}$, this implies that $\psi( r_\Gamma \cdot)|_A=\phi(r_\Gamma \cdot)=\phi$. Put $\psi'=\psi(r_\Gamma\cdot )\in B^*$, which then lies  in $\AC(\Gamma)=X$. Hence, there is $\omega\in M_*$ such that $\omega|_A=\psi'|_A=\phi$, so that $\phi\in (M_*)|_A$ as desired.
\end{proof}

We now illustrate how this result applies in some classical settings.

\begin{example}\label{E:ballalg}
Let $d\geq 1$ be an integer, let $\bB_d\subset\bC_d$ be the open unit ball, and let $\bS_d$ denote its topological boundary, the unit sphere. Let $\sigma$ denote surface measure on $\bS_d$. Let $M=L^\infty(\bS_d,\sigma), B=\rC(\bS_d)$ and let $A\subset B$ denote the ball algebra, i.e.\ the norm-closure of the polynomials. 
It is well known that the weak-$*$ closure of $A$ in $M$ is simply $H^\infty(\bB_d)$,  by making the usual identification via radial boundary values. Further, given $f\in H^\infty(\bB_d)$ and $0<r<1$, we define $f_r:\ol{\bB_d} \to \bC$ as $f_r(z)=f(rz)$. Then, $f_r\in A$, $\|f_r\|\leq \|f\|$ and the net $(f_r)$ converges to $f$ in the weak-$*$ topology of $M$. We infer that the set
\[
X=\{\psi\in B^*:\psi|_{A}\in (M_*)|_{A}\}
\]
is norm-closed by Lemma \ref{L:Henkinclosed}. In turn, $X$
is a left band by Henkin's theorem \cite[Theorem 9.3.1]{rudin2008}, 
Thus, $A$ admits a Lebesgue decomposition relative to $M$ by Corollary \ref{C:Lebdecomp}; this also follows from \cite[Section 9.8]{rudin2008}. 

Next, we note that  $B$ admits a Lebesgue decomposition relative to $M$ by  \cite[Lemma 3.4]{CH2025}. Nevertheless, the Lebesgue decompositions of $A$ and $B$ relative to $M$ are not compatible, by \cite[Theorem 4.8 and Proposition 6.3]{CH2025}.  \qed
\end{example}

\begin{example}\label{E:AH}
Let $\H$ be a Hilbert function space of the type considered  in Subsection \ref{SS:HFS}. Let $\A(\H)\subset \T(\H)\subset B(\H)$ be the operator algebras defined therein. Consider the set
\[
X=\{\psi\in \T(\H)^*:\psi|_{\A(\H)}\in (B(\H)_*)|_{\A(\H)}\}.
\]
This is the set of $\M(\H)$-functionals examined in Section \ref{S:A(H)}. As explained there, this coincides with the set $\mathscr{B}$  from  \cite[Theorem 5.7]{CT2023Henkin}, so we see that $X$ is a norm-closed left band.  Thus, $\A(\H)$ admits a Lebesgue decomposition relative to $B(\H)$ by Corollary \ref{C:Lebdecomp}. 

Next, recall that  $\T(\H)$ contains the ideal of compact operators on $\H$, whence it admits a Lebesgue decomposition relative to $B(\H)$ by  \cite[Lemma 3.4]{CH2025}. Nevertheless, the Lebesgue decompositions of 
$\A(\H)$ and $\T(\H)$ relative to $B(\H)$ are not compatible, by \cite[Theorem 4.8 and Proposition 6.1]{CH2025}.
\qed
\end{example}

We emphasize again that the incompatibility of the Lebesgue decompositions in the previous two examples shows in particular that Corollary \ref{C:Lebdecomp} is applicable in some cases where \cite[Theorem 2.4]{CH2025} is not.

\section{The F.\& M. Riesz properties}\label{S:FM}
It is a natural question whether the statements in Theorem \ref{T:ForelliFM} are ever all equivalent, perhaps under some additional assumptions. One way of answering this question is to determine when the F.\& M. Riesz property is equivalent to its stronger variant. We explore this issue in this section.

Let $B$ be a unital $\rC^*$-algebra and let $A\subset B$ be a unital subalgebra. 
Let $\Delta\subset B^*$ be a norm-closed, convex set of states on $B$.  Define
\[
E(\Delta)=\{\psi\in B^*:\psi|_A\in  \AC(\Delta)|_A\}.
\]
This set will play an analogous role to the set of functionals whose restrictions to $A$ are ``Henkin" (see e.g.\ Section 8.1 below).
We now show that the F.\& M. Riesz property implies a certain rigidity property for left bands containing $\Delta$.  
 Its relation to the 
 classical Cole-Range theorem \cite[Theorem 9.6.1]{rudin2008} will be thoroughly 
explained at the end of this section.

\begin{theorem}\label{T:colerange}
Assume that $A$ has the F.\& M. Riesz property in $B$ with respect to $\Delta$. If $X\subset B^*$ is a left band satisfying $\Delta\subset X\subset E(\Delta)$, then $X=\AC(\Delta)$.
\end{theorem}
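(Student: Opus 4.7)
The inclusion $\AC(\Delta)\subseteq X$ is essentially a gift of the hypotheses: every $\phi\in \AC(\Delta)$ is absolutely continuous with respect to some element of $\Delta\subseteq X$, so the left band property forces $\phi\in X$. The substance of the theorem lies in the reverse inclusion $X\subseteq \AC(\Delta)$, and that is where the F.\&M.\ Riesz property will do the work.

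To prove $X\subseteq \AC(\Delta)$, I would take an arbitrary $\psi\in X$ and aim to show that $\psi$ annihilates $(1-r_\Delta)B^{**}$. The key maneuver is to pass to the modulus. Since $|\psi|$ is trivially absolutely continuous with respect to $\psi$, the left band property promotes it to an element of $X$. Combined with $X\subseteq E(\Delta)$, this furnishes some $\phi\in \AC(\Delta)$ with $|\psi||_A=\phi|_A$, so that $|\psi|-\phi\in A^\perp$. The F.\&M.\ Riesz property applied to this difference yields $(|\psi|-\phi)((1-r_\Delta)\,\cdot\,)\in A^\perp$; but $\phi\in \AC(\Delta)=((1-r_\Delta)B^{**})_\perp$ already annihilates the $(1-r_\Delta)$ factor, so in fact $|\psi|((1-r_\Delta)\,\cdot\,)\in A^\perp$.

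To close the argument, I would evaluate at the unit $1\in A$, which reduces the previous statement to the single scalar identity $|\psi|(1-r_\Delta)=0$. Feeding this into Lemma~\ref{L:stateineq} with projection $p=1-r_\Delta$ gives
\[
|\psi((1-r_\Delta)x)|^2 \le \|\psi\|\,\|x\|^2\,|\psi|(1-r_\Delta) = 0
\]
for every $x\in B^{**}$, which is precisely the statement that $\psi\in ((1-r_\Delta)B^{**})_\perp=\AC(\Delta)$.

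The only nonformal step is the passage $\psi\mapsto |\psi|$, which is what allows us to extract a scalar equation by evaluation at $1\in A$ and then bootstrap back to the vanishing of $\psi$ on $(1-r_\Delta)B^{**}$ via Lemma~\ref{L:stateineq}. I do not anticipate a serious obstacle; the hypotheses $\Delta\subseteq X\subseteq E(\Delta)$ seem tailor-made for this short route, and the F.\&M.\ Riesz decomposition of $A^\perp$ provides the only real machinery needed.
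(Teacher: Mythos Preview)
Your argument is correct, and it follows a genuinely different route from the paper's.

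The paper does not pass to the modulus. Instead, given $\phi\in X$, it isolates the singular part $\psi=\phi((1-r_\Delta)\,\cdot\,)$ and, for each fixed $b\in B$, uses the left band property (via \cite[Lemma 2.7]{CT2023Henkin}) to see that $\psi(\,\cdot\,b)\in X\subset E(\Delta)$. Writing $\psi(\,\cdot\,b)=\alpha+\nu$ with $\alpha\in\AC(\Delta)$ and $\nu\in A^\perp$, the F.\&M.\ Riesz property applied to $\nu$ forces $\alpha\in A^\perp$ as well, whence $\psi(\,\cdot\,b)\in A^\perp$ and in particular $\psi(b)=0$. Since $b$ was arbitrary, $\psi=0$.

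Your approach, by contrast, applies the left band property once to promote $|\psi|$ into $X$, then a single application of the F.\&M.\ Riesz property together with evaluation at $1\in A$ yields the scalar identity $|\psi|(1-r_\Delta)=0$, from which Lemma~\ref{L:stateineq} does the rest. This is arguably more streamlined: there is no iteration over $b\in B$, and the positivity of $|\psi|$ makes the single evaluation at the unit decisive. The paper's route avoids the modulus and Lemma~\ref{L:stateineq}, working entirely with the Riesz decomposition of individual functionals; this is slightly more hands-on but requires no passage through positivity. Both arguments lean on the same three ingredients --- the left band hypothesis, the inclusion $X\subset E(\Delta)$, and the F.\&M.\ Riesz property --- just assembled differently.
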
 
 \begin{proof}
 Since $X$ is assumed to be a left band containing $\Delta$, we necessarily have $\AC(\Delta)\subset X$. Conversely, fix $\phi\in X$. We see that $\phi( r_\Delta \cdot)\in \AC(\Delta)\subset X$, so it suffices to show that the functional $\psi=\phi( (1-r_\Delta)\cdot)$ is zero. To see this, fix $b\in B$. Since $X$  is a subspace, $\psi=\phi-\phi( r_\Delta \cdot)\in X$. Further, since $X$ is a left band again, we see that $\psi( \cdot b)\in X$ by
 \cite[Lemma  2.7]{CT2023Henkin}. By assumption, we have $X\subset E(\Delta)$.  Using the definition of $E(\Delta)$ we may  find 
 $\nu\in A^\perp$ and $\alpha\in \AC(\Delta)$ such that $\psi( \cdot b )=\alpha+\nu$. On the other hand, $\psi(\cdot b)\in \SG(\Delta)$ by Lemma \ref{L:SG}. Since we have $\nu=-\alpha+\psi(\cdot b)$, the F.\& M. Riesz property implies 
 that $\psi(\cdot b)\in A^{\perp}$, so in particular $\psi(b)=0$. Consequently, $\psi=0$ as desired.
 \end{proof}
 
 We can now reformulate the strong F.\& M. Riesz property.   

\begin{corollary} \label{C:FMs}  Let $B$ be a unital $\rC^*$-algebra and let $A\subset B$ be a unital  subalgebra. Let $\Delta$ be a norm-closed convex subset of the state space of $B$.  Assume that $A$ has the F.\& M. Riesz property in $B$ with respect to $\Delta$. 
Then the  following statements are equivalent.  \begin{enumerate}[{\rm (i)}]
\item  $A$ has the strong F.\& M. Riesz property in $B$ with respect to $\Delta$.
\item  $E(\Delta) =  \AC(\Delta)$. 
\item $E(\Delta)$ is a left band.
\end{enumerate}
 \end{corollary}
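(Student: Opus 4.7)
The plan is to prove (i) $\Rightarrow$ (ii) $\Rightarrow$ (iii) $\Rightarrow$ (i) in a cycle, with the nontrivial implication being (iii) $\Rightarrow$ (i), which is where Theorem \ref{T:colerange} enters. Throughout one notes the trivial inclusion $\AC(\Delta) \subset E(\Delta)$: any $\psi \in \AC(\Delta)$ is its own extension witnessing $\psi|_A \in \AC(\Delta)|_A$.

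First I would tackle (i) $\Rightarrow$ (ii). Fix $\psi \in E(\Delta)$ and choose $\alpha \in \AC(\Delta)$ with $\psi|_A = \alpha|_A$. Then $\psi - \alpha \in A^\perp$, and the strong F.\& M. Riesz property $A^\perp \subset \AC(\Delta)$ forces $\psi - \alpha \in \AC(\Delta)$, whence $\psi \in \AC(\Delta)$ because $\AC(\Delta)$ is a subspace. For (ii) $\Rightarrow$ (iii), I would simply note that $\AC(\Delta)$ is automatically a left band: if $\phi$ is absolutely continuous with respect to some $\psi \in \AC(\Delta)$, and $\psi$ is absolutely continuous with respect to some $\delta \in \Delta$, then the transitivity of absolute continuity (equivalently, of support projection domination) places $\phi$ in $\AC(\Delta)$.

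For (iii) $\Rightarrow$ (i), the crucial step is to apply Theorem \ref{T:colerange} with $X = E(\Delta)$. All hypotheses of that theorem are in place: $A$ has the F.\& M. Riesz property by standing assumption, $X = E(\Delta)$ is a left band by (iii), the inclusion $\Delta \subset \AC(\Delta) \subset E(\Delta)$ is clear, and $X \subset E(\Delta)$ is tautological. The theorem then yields $E(\Delta) = \AC(\Delta)$. Since any $\phi \in A^\perp$ satisfies $\phi|_A = 0 \in \AC(\Delta)|_A$, we get $A^\perp \subset E(\Delta) = \AC(\Delta)$, which is precisely the strong F.\& M. Riesz property.

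I do not anticipate a genuine obstacle, since the bulk of the work has been front-loaded into Theorem \ref{T:colerange}. The only subtlety to be careful about is keeping straight which direction of absolute continuity is being invoked in the left band argument, and ensuring the reader sees that the inclusion $\AC(\Delta) \subset E(\Delta)$ needs no hypothesis so that (i) $\Rightarrow$ (ii) really is an equality.
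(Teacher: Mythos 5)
Your proposal is correct and follows essentially the same route as the paper: the same cycle (i) $\Rightarrow$ (ii) $\Rightarrow$ (iii) $\Rightarrow$ (i), with the identical subtraction argument for (i) $\Rightarrow$ (ii) and the same application of Theorem \ref{T:colerange} with $X=E(\Delta)$ for the nontrivial implication. The only difference is that you spell out details the paper leaves implicit (transitivity of absolute continuity for (ii) $\Rightarrow$ (iii), and the inclusion $A^\perp\subset E(\Delta)$ at the end), all of which are accurate.
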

\begin{proof} 

 (i) $\Rightarrow$ (ii): The inclusion $\AC(\Delta)\subset E(\Delta)$ is trivial. Conversely, assume that $\psi\in E(\Delta)$. By definition, this means that there is $\phi\in \AC(\Delta)$ such that $\psi|_A=\phi|_A$. Now, $\psi-\phi\in A^\perp$ so by assumption $\psi-\phi\in \AC(\Delta)$, which forces $\psi\in \AC(\Delta)$.
 
 (ii) $\Rightarrow$ (iii): This is trivial.
 
 (iii) $\Rightarrow$ (i):  This follows immediately from Theorem \ref{T:colerange}.
\end{proof}

We remark that by using the F.\& M. Riesz property,  it is easy to see that the  strong F.\& M. Riesz property with respect to $\Delta$ is also  equivalent to $\SG(\Delta)\cap A^\perp=\{0\}$.

  We illustrate this result by applying it to a classical example.

\begin{example}\label{E:Hinfband}
Let $\lambda$ denote arclength measure on the unit circle $\bT\subset \bC$. By means of radial boundary values, we may embed the algebra $H^\infty(\bD)$ of bounded holomorphic functions on the open unit disc as a unital norm-closed subalgebra of $L^\infty(\bT,\lambda)$. Choosing $\Delta=\{\lambda\}$, 
Lemma \ref{L:predualphi} implies that $\AC(\Delta)$ coincides with the weak-$*$ continuous linear functionals on $L^\infty(\bT,\lambda)$. 
Hence, $E(\Delta)$ coincides with the set $X$ appearing in Corollary \ref{C:Lebdecomp}.

Consider now the Douglas algebra $H^\infty(\bD)+\rC(\bT)\subset L^\infty(\bT,\lambda)$, which is known to be proper and norm-closed (see for instance \cite[Chapter 6]{douglas1998}). Hence, there is a non-zero bounded linear functional $\phi$ on $L^\infty(\bT,\lambda)$ that annihilates $H^\infty(\bD)+\rC(\bT)$. Since $\rC(\bT)$ is weak-$*$ dense in $L^\infty(\bT,\lambda)$, we see that $\phi$ is not weak-$*$ continuous, so that $H^\infty(\bD)$ does not have the strong F.\& M. Riesz property in $L^\infty(\bT,\lambda)$ with respect to $\Delta$. On the other hand,  it is known
 that $H^\infty(\bD)$ has  the F.\& M. Riesz property in $L^\infty(\bT,\lambda)$ with respect to $\Delta$ (see \cite{ando1978}). We may thus use Corollary \ref{C:FMs} to see that the  set $E(\Delta)$  is not a left
band. \qed
\end{example}

\subsection{Henkin functionals and the Cole-Range phenomenon}
We now wish to connect Corollary \ref{C:FMs} with the main objects studied in \cite{CT2023Henkin}. This will be accomplished via an appropriate choice of $\Delta$.

Let $M$ be a von Neumann algebra. Let $B\subset M$ be a unital $\rC^*$-subalgebra that is weak-$*$ dense in $M$. Let $A\subset B$ be a unital subalgebra.  A bounded linear functional $\phi:A\to\bC$ is said to be \emph{Henkin} relative to $M$ if it extends weak-$*$ continuously to the weak-$*$ closure of $A$ in $M$. This notion is in agreement with our terminology used in Section \ref{S:A(H)}. 
Equivalently, $\phi$ is Henkin  if it extends weak-$*$ continuously to $M$ \cite[Theorem 3.6]{rudin1991FA}, so the space of Henkin functionals is simply $(M_*)|_A$.

Let $\Delta$ denote the set of restrictions to $B$ of normal states on $M$.  Let $X\subset B^*$ denote the subspace consisting of functionals $\phi\in B^*$ such that $\phi|_A$ is Henkin -- this is precisely the set considered in Theorem \ref{C:Lebdecomp}. This subspace is closed whenever $(M_*)|_A$ is closed, which can be ascertained with the help of Lemma \ref{L:Henkinclosed}. Further, by virtue of Lemma \ref{L:restnormal}, we see that $X=E(\Delta)$. In particular, Corollary \ref{C:FMs} can serve as a tool for checking when $X$ forms a left band,
which was a central question in \cite{CT2023Henkin}, and shown to be useful in Corollary \ref{C:Lebdecomp}. 

Furthermore, Theorem \ref{T:colerange}  can  be interpreted as a unifying principle underlying previous known results, as we illustrate next.

\begin{example}\label{E:Henkin}
Assume that $X$, as defined in the last paragraph, is norm-closed (see Lemma  \ref{L:Henkinclosed}).
Clearly, $A^\perp\subset X$. Next, let $\Omega\subset B^*$ denote  the norm closure of the convex hull of $\{|\phi|:\phi\in A^\perp,\|\phi\|=1\}$. In this case, $A^\perp\subset \AC(\Omega)$.
If $\psi \in E(\Omega)$ then as before using the definition of $E(\Omega)$ we may  find 
 $\nu\in A^\perp$ and $\alpha\in \AC(\Omega)$ such that $\psi =\alpha+\nu \in \AC(\Omega)$.   Thus 
 $E(\Omega)=\AC(\Omega)$.
 Clearly $A$ has the strong F.\& M. Riesz property in $B$ with respect to $\Omega$.

Let us now examine the assumption $\Omega\subset X\subset E(\Omega)$ from Theorem \ref{T:colerange}.
The inclusion $X\subset E(\Omega)$ is equivalent to $X|_A\subset \AC(\Omega)|_A$, which can then be phrased in terms of weak topologies on $A$, 
matching  the condition of \emph{analyticity} in \cite[Theorem 4.4]{CT2023Henkin}, and
recovering that theorem. 
Next, since  $A^\perp \subset X$, if $X$ is a left band  then $\Omega\subset X$.
This follows because  if $\varphi \in A^\perp \subset X$ then $| \phi | \in X$, and because $X$ is convex and assumed to be closed. 
Theorem \ref{T:colerange}  implies in turn that $X=\AC(\Omega)$, assuming analyticity. This gives a 
variant of \cite[Theorem 4.5]{CT2023Henkin}.
\qed
\end{example}

\begin{example}\label{E:AB}
We go back to the setting of Example \ref{E:ballalg}, so that $M=L^\infty(\bS_d,\sigma)$, $B=\rC(\bS_d)$ and $A$ is the ball algebra. Again, $X\subset B^*$ is the space of functionals whose restriction to $A$ is Henkin relative to $M$.  Next, let $\Omega\subset B^*$ denote the norm-closed convex set of states $\rho$ such that 
\[
\rho(a)=a(0),\quad a\in A.
\]
Clearly,  integration against $\sigma$ is in $\Omega$ by Cauchy's formula. Furthermore,  $\Omega \subset X$. 
Since Henkin functionals on $A$  admit a weak-$*$ continuous extension to $M$, it follows that $X\subset A^\perp+\AC(\sigma)\subset A^\perp+\AC(\Omega)=E(\Omega)$. 

It is known that $A$ has the F.\& M. Riesz property in $B$ with respect to $\Omega$ by \cite[Theorem 9.5.6]{rudin2008}. In addition, $X$ is a left band by Henkin's theorem \cite[Theorem 9.3.1]{rudin2008}. Hence, applying Theorem \ref{T:colerange}, we find $X=\AC(\Omega)$, and we recover the classical characterization of Henkin measures for the ball algebra due to Cole and Range \cite[Theorem 9.6.1]{rudin2008}. \qed
\end{example}

\section{The Amar--Lederer property}\label{S:AL}

Section \ref{S:FM} studied the interplay between the F.\& M. Riesz properties as a mechanism for understanding when all the statements appearing in Theorem \ref{T:ForelliFM} are equivalent. In this section, we tackle this question in different direction, focusing instead on the implication (vi) $\Rightarrow$ (iv) therein. As we shall see, this is a rather delicate issue.

Let $B$ be a unital $\rC^*$-algebra and let $A\subset B$ be a unital norm-closed subalgebra. Let $\Delta$ be a norm-closed convex subset of the state space of $B$. We say that $A$ has the \emph{Amar--Lederer property in $B$ with respect to $\Delta$} if all closed projections in $\AC(\Delta)^\perp$ are dominated by an $A$-{\em peak} projection in $\AC(\Delta)^\perp$. The reason for this terminology is made clear in the next example, which incidentally also shows that the implication  (vi) implies (iv) generally fails in Theorem \ref{T:ForelliFM}, even for commutative $B$ and $\Delta$ a singleton, and even if we then strengthen (vi) to insist that
all projections there are closed.

\begin{example}\label{E:AL}
Consider $H^\infty(\bD)\subset L^\infty(\bT,\lambda)$ as in Example \ref{E:Hinfband}.  Let $X$ denote the character space of $L^\infty(\bT,\lambda)$. Then, via the Gelfand transform, we know that $L^\infty(\bT,\lambda)\cong \rC(X)$. Let $A\subset \rC(X)$ denote the unital norm-closed subalgebra corresponding to $H^\infty(\bD)$ under this identification. Let $\tau$ denote the state on $\rC(X)$ corresponding to integration against $\lambda$ on $L^\infty(\bT,\lambda)$. Let $\Delta=\{\tau\}$. In this case,  $A$ has the Amar--Lederer property in $B$ with respect to $\Delta$ by a classical result of Amar and Lederer \cite{AL1971}. 
Of course understanding this requires also the dictionary between closed sets and closed projections, and 
between  peak sets and closed projections in $A^{\perp \perp}$,
  that we have discussed earlier.  On the other hand, properties (iv) and (v) in Theorem \ref{T:ForelliFM} fail.
To see this note that the latter properties, in the specific case of this example,  imply that closed `Lebesgue-null' sets are generalized peak sets.
Yet  \cite[Proposition 6.3]{pelczynski1977} gives a   closed `Lebesgue-null' $G_\delta$ set in this context which is not a peak set. 
Hence, it is not a generalized peak set, since $G_\delta$ generalized peak sets are peak sets \cite[Lemma II.12.1]{gamelin1969}.  
\end{example}

For $H^\infty(\bD)\subset L^\infty(\bT,\lambda)$ as above, the Amar--Lederer property was established in \cite{AL1971} as a tool in characterizing the exposed points of the unit ball of $H^\infty(\bD)$. Subsequently, it also played a key role in Ando's proof of uniqueness for the predual of $H^\infty(\bD)$ \cite{ando1978}, and in establishing a Lebesgue decomposition for $H^\infty(\bD)$. In light of these developments, it stands to reason that the Amar--Lederer property should also be meaningful in the context of certain noncommutative analogues of the inclusion $H^\infty(\bD)\subset L^\infty(\bT,\lambda)$.  Indeed the Amar--Lederer property has inspired several deep papers in noncommutative function theory, some of them concerned with the topic of uniqueness of predual.  
  Ueda and the first author with Labuschagne have proved in \cite{ueda2009, BL2018} a partial Amar--Lederer theorem for maximal subdiagonal algebras in $\sigma$-finite von Neumann algebras, and used 
this to give several powerful applications such as a Lebesgue decomposition for such algebras.
We say `partial' because of a severe restriction imposed  in those papers on the projections in the theorem. 
Accordingly, we set out to remove this restriction  and establish the full noncommutative Amar--Lederer theorem for the class of  maximal subdiagonal algebras. 
The first part of argument has little to do with those algebras, but instead is  purely von Neumann algebraic in nature.

\subsection{Von Neumann algebraic preliminaries}

Let $B$ be a
 $\rC^*$-algebra and let $q\in B^{**}$ be a projection. Let $\phi$ be a state on $B$.
We say that $\phi$ is \emph{regular at $q$} if  for all projections $p\in B$ dominating $q$ and for all $\eps > 0$, there exists an open projection $u\in B^{**}$ such that $p \geq u\geq q$ and $\phi(u) < \phi(q) + \eps$.
(Note that we are writing here and below, as is common, $\hat{\phi}(u)$ as $\phi(u)$, where 
$\hat{\phi}$ is the extension of $\phi$ to the bidual.)

For example, let $X$ be a compact Hausdorff space and let $B=\rC(X)$. Let $\mu$ be regular Borel measure on $X$, and let $\phi$ be the state on $B$ of integration against $\mu$. Let $q=\chi_E\in B^{**}$ for some Borel subset $E\subset X$. Then, $\phi$ is regular at $q$. Indeed, given $\eps>0$, by the regularity property of the measure $\mu$, we may choose an open projection $u\in B^{**}$ such that $u_0\geq q$ and $\phi(u_0)<\phi(q)+\eps$. If we are also given a projection $p\in B$ such that $p\geq q$, then we may simply take $u=u_0p$, which is still open and has the required property.

\begin{proposition} \label{P:stateapproxopen} 
Let $B$ be a unital $\rC^*$-algebra and let $q\in B^{**}$ be a projection. Let $\phi$ be a state on $B$.
Two sufficient  conditions for  regularity of $\phi$  at $q$ are the following: 
\begin{enumerate}[{\rm (i)}]
\item $q$ is open or closed.
\item $q$ is of type $F_\sigma$ and $\phi$ is tracial with $\phi(q) = 0$.
\end{enumerate}
\end{proposition}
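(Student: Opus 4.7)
The plan breaks (i) into the open and closed cases. When $q$ is open, I would take $u = q$, for which the requirement $\phi(u) < \phi(q) + \eps$ is trivial. When $q$ is closed, the standard approximation \cite[Proposition II.3]{akemann1969} (already used several times in the paper) produces an open projection $u_0 \in B^{**}$ with $u_0 \geq q$ and $\phi(u_0) < \phi(q) + \eps$. To enforce the upper bound by $p$, I would intersect with $p$: any projection in $B$ is both open (as the weak-$*$ limit of the constant net) and closed (its complement in $B$ being likewise open) as a projection in $B^{**}$, and in Akemann's noncommutative topology finite meets of open projections remain open, so $u := u_0 \wedge p$ is an open projection with $p \geq u \geq q$ and $\phi(u) \leq \phi(u_0) < \phi(q) + \eps$.

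For (ii), I would fix a decomposition $q = \bigvee_n q_n$ with each $q_n$ closed, afforded by the $F_\sigma$ hypothesis. Since $\phi$ is positive and $q_n \leq q$, the assumption $\phi(q) = 0$ forces $\phi(q_n) = 0$ for every $n$. Applying case (i) to each $q_n$ with the same majorant $p$ and tolerance $\eps \cdot 2^{-n-1}$, I obtain open projections $u_n \in B^{**}$ with $p \geq u_n \geq q_n$ and $\phi(u_n) < \eps \, 2^{-n-1}$. Setting $u := \bigvee_n u_n$ then yields an open projection satisfying $p \geq u \geq q$. What remains is to bound $\phi(u)$ by $\phi(q) + \eps = \eps$, and I expect this to be the main delicate step.

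To carry out that last bound, I would exploit the traciality hypothesis. The normal extension of $\phi$ to $B^{**}$ inherits traciality from $\phi$ by weak-$*$ density of $B$ together with separate weak-$*$ continuity of multiplication. For any two projections $r,s$ in a von Neumann algebra and any normal tracial state $\tau$, the Kaplansky parallelogram equivalence $r \vee s - s \sim r - r \wedge s$ yields $\tau(r \vee s) + \tau(r \wedge s) = \tau(r) + \tau(s)$, and hence the subadditivity $\tau(r \vee s) \leq \tau(r) + \tau(s)$. Induction gives $\phi\bigl(\bigvee_{n=1}^{N} u_n\bigr) \leq \sum_{n=1}^{N} \phi(u_n)$, and the normality of (the extended) $\phi$, applied to the increasing net of finite joins, lets me pass to the limit to conclude that
\[
\phi(u) \;\leq\; \sum_n \phi(u_n) \;<\; \sum_n \eps \, 2^{-n-1} \;\leq\; \eps,
\]
as required. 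The only nontrivial ingredient is thus the subadditivity of a normal tracial state on countable joins of projections, and this is precisely where the tracial hypothesis of (ii) is needed.
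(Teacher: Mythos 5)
Your handling of the open case of (i) and your argument for (ii) are correct and essentially identical to the paper's: the paper also reduces (ii) to (i) with tolerances $\eps/2^k$, forms $u=\bigvee_k u_k$, and obtains $\phi(u)\leq\sum_k\phi(u_k)$ from the Kaplansky parallelogram law \cite[Proposition V.1.6]{takesaki2002} together with the observation that the normal extension of a tracial state to $B^{**}$ is again tracial. The problem is the closed case of (i). Your key step --- that $u:=u_0\wedge p$ is open because ``finite meets of open projections remain open'' --- is false in a noncommutative $\rC^*$-algebra, so the argument collapses exactly at the point where the real work has to be done. The paper itself records the dual failure: a type $F_0$ projection, i.e.\ a \emph{finite} supremum of closed projections, need not be closed (\cite[Example II.6]{akemann1969}, cited in Section 6); taking complements, a finite infimum of open projections need not be open. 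For a concrete instance with both projections actually in $B$ (hence clopen, as in your situation): in $B=\rC([0,1],\bM_2)$ let $p$ be the constant projection $e_{11}$ and let $a(t)$ be the rank-one projection onto the span of $(\cos t,\sin t)$. Then $a\wedge p$ is the minimal projection in $B^{**}$ supporting the pure state $f\mapsto \ip{f(0)e_1,e_1}$; the hereditary subalgebra of $B$ it would have to support is $\{0\}$, so $a\wedge p$ is not open.

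This is precisely why the paper does not simply quote \cite[Proposition II.3]{akemann1969} and then cut down by $p$. Instead it reruns Akemann's construction so that the constraint imposed by $p$ is built in from the start: writing $p'=1-p$ (a closed projection orthogonal to $q$, so that $q+p'$ is closed and $p-q=1-(q+p')$ is open), one takes an increasing net $(b_i)$ in $B$ converging to $p-q$, sets $a_i=b_i+p'$, and observes that $p'b_ip'=0$ forces $f(a_i)p'=f(1)p'$ for every continuous $f$. Hence the closed spectral projection $r=\chi_{[\eps/2,1]}(a_i)$ automatically dominates $p'$, and $u=1-r$ is an open projection with $p\geq u\geq q$ and $\phi(u)<\phi(q)+\eps$. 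To repair your proof you would need some such device producing an open projection squeezed between $q$ and $u_0\wedge p$; the meet itself does not qualify.
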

\begin{proof}
(i)  The `open' case is trivial, so assume $q$ closed. We adapt
 \cite[Proposition II.3]{akemann1969}.  Write $q^\perp=1-q$. Let $\eps>0$ and let $p$ be a projection in $B$ with $p \leq q^\perp$. The desired conclusion is equivalent to showing that there is a closed projection $r\in B^{**}$ such that $p\leq r\leq q^\perp$ and $\phi(r)>\phi(q^\perp)-\eps$.  Because $q$ is closed and is orthogonal to $p$, $p+q$ is closed \cite[Theorem II.7]{akemann1969}, and hence $q^\perp - p=1-(q+p)$ is open. Thus, there is an increasing net of contractions $(b_i)$ in $B$ converging to $q^\perp - p$ in the weak-$*$ topology of $B^{**}$. Consider the positive element $a_i = b_i + p\in B$. The increasing net $(a_i)$ converges to $q^\perp$ in the weak-$*$ topology of $B^{**}$. Choose an index $i$ large enough so that $\phi(q^\perp-a_i)<\eps/2$. Note that $a_i p = p$ since $p b_i p \leq p (q^\perp - p) p = 0$.  Thus for any polynomial $f$ we have $f(a_i) p = f(1) p$, and by uniform approximation the same is true for any continuous function on $[0,1]$. The proof of  \cite[Proposition II.3]{akemann1969} constructs a sequence $(f_n)$ of continuous functions on $[0,1]$ with $f_n(1)=1$ such that $(f_n(a_i))$ increases in the weak-$*$ topology to the closed projection $r=\chi_{[\eps/2,1]}(a_i)\in B^{**}$. Because $f_n(a_i)p=p$ for each $n$, we find $rp=p$ or $r\geq p$. Finally, arguing as in  \cite[Proposition II.3]{akemann1969}, we see that $\phi(q^\perp-r)<\eps$ as desired.

(ii) Since $q$ is of type $F_\sigma$, there is a countable collection $\{q_k:k\geq 1\}$ of closed projections in $B^{**}$ such that $q=\bigvee_{k} q_k$. Let $\eps>0$, and 
 let $p$ be a projection in $B$ dominating $q$. For each $k\geq 1$, we apply (i) to find an open projection $u_k\in B^{**}$ 
 with $\phi(u_k) <\eps/2^k$ and $q_k \leq u_k \leq p$. The projection $u=\bigvee_k u_k$ is open and clearly dominates $q$
 and is dominated by $p$. Furthermore, since $\phi$ is tracial on $B$, it is also tracial on $B^{**}$ so we see that $ \phi(u)\leq\sum_{k} \phi(u_k)$ by \cite[Proposition V.1.6]{takesaki2002}. Therefore $\phi(u)<\eps$.
\end{proof}

{\em Remark.} Proposition \ref{P:stateapproxopen} is true for nonunital $\rC^*$-algebras with `closed' in (i) replaced by `compact'.  This follows almost immediately by applying the unital case in the unitization $B^1$,
since  $q$ is compact if it is closed in $B^1$.

\medskip

We also record a fact found in the proof of \cite[Proposition II.14]{akemann1969}.

\begin{lemma}\label{L:openvN}
Let $M$ be a von Neumann algebra. Let $u\in M^{**}$ be an open projection and $\phi$ be a normal state on $M$. Then, there is a projection $p\in M$ dominating $u$ such that $\phi(u)=\phi(p)$.
\end{lemma}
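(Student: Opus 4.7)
The strategy is to exhibit the desired projection $p$ as the supremum in $M$ of the positive contractions approximating $u$, and to see that this supremum is automatically a projection by transporting the problem through the normal $*$-isomorphism $\Omega \colon M \to M^{**}z$ recalled in the preliminaries.

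First I would fix an increasing net $(a_i)$ of positive contractions in $M$ converging to $u$ in the weak-$*$ topology of $M^{**}$, which exists by definition of openness. Since $M$ is a von Neumann algebra, the bounded increasing net $(a_i)$ has a supremum $p \in M$, obtained as the weak-$*$ (equivalently, SOT) limit within $M$; clearly $0\leq p \leq 1$.

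Next I would argue that $p$ is actually a projection. Because $\Omega$ is normal, it preserves suprema of bounded increasing nets, so $\Omega(p)=\sup_i \Omega(a_i)=\sup_i a_i z$ in $M^{**}z$. On the other hand, right multiplication by the fixed central element $z$ is weak-$*$ continuous on $M^{**}$, whence $a_i z \to uz$ in the weak-$*$ topology of $M^{**}$; for a bounded increasing net in $M^{**}$ the weak-$*$ limit coincides with the supremum, so $\sup_i a_i z=uz$. Therefore $\Omega(p)=uz$, which is a projection as the product of the commuting projections $u$ and $z$, and pulling back through the $*$-isomorphism $\Omega$ shows that $p$ is a projection in $M$.

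Finally I would verify the two required properties. Each $a_i$ satisfies $a_i \leq p$ in $M$, hence also in $M^{**}$, so the supremum $u=\sup_i a_i$ taken in $M^{**}$ satisfies $u\leq p$. For the equality $\phi(p)=\phi(u)$, the (unique) normal extension $\hat\phi$ of $\phi$ to $M^{**}$ annihilates $1-z$, so $\hat\phi(u)=\hat\phi(uz)=\hat\phi(\Omega(p))=\phi(p)$. The only technical point requiring care is the sup-preservation property of $\Omega$, but this is automatic from the stated normality of $\Omega$ together with the standard fact that normal $*$-homomorphisms between von Neumann algebras commute with bounded increasing suprema.
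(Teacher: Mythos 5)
Your proof is correct. Note that the paper does not actually prove this lemma; it only records it as ``a fact found in the proof of \cite[Proposition II.14]{akemann1969}'', so there is no in-paper argument to compare against. Your proposal supplies a complete and self-contained proof along the natural lines: take the supremum $p$ of the approximating net inside $M$ (Vigier), transport through the normal isomorphism $\Omega$ onto $M^{**}z$ to identify $\Omega(p)$ with $uz$ (using that a bounded increasing net in a von Neumann algebra has its weak-$*$ limit equal to its supremum, both in $M$ and in $M^{**}$), and conclude. All the steps check out, including the two points that actually require care: that $u=\sup_i a_i$ computed in $M^{**}$ is dominated by $p$ even though the two suprema generally differ, and that $p$ is a projection because $uz$ is.

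One small streamlining you might note: the equality $\phi(p)=\phi(u)$ does not really need the detour through $z$. Since $\phi$ is normal on $M$, $\phi(p)=\lim_i\phi(a_i)$, and since $\widehat\phi$ is weak-$*$ continuous on $M^{**}$, $\widehat\phi(u)=\lim_i\phi(a_i)$ as well; these limits agree. The central projection $z$ is genuinely needed only for the separate point that $p$ is a projection, which your argument handles correctly via $\Omega(p)=uz$.
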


We now establish the pivotal technical tool that we require.

\begin{proposition}\label{P:sequenceproj}
 Let $M$ be a von Neumann algebra, let $\phi$ be a normal state on $M$ and let $q\in M^{**}$ be a projection.
 Then $\phi$ is regular at $q$ if and only if there is a decreasing sequence of projections $(p_n)$ in $M$ dominating $q$ such that $\lim_{n\to\infty}\phi(p_n)=\phi(q)$.
\end{proposition}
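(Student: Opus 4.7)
The proof has two directions, and the key tool throughout is Lemma \ref{L:openvN}, which allows us to replace an open projection in $M^{**}$ by a genuine projection in $M$ with the same value under $\phi$.

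For the sufficiency direction, assume a decreasing sequence $(p_n)$ in $M$ is given with $p_n \geq q$ and $\phi(p_n)\to\phi(q)$. Given any projection $p\in M$ with $p\geq q$ and any $\eps>0$, I would take $u = p\wedge p_n$ for $n$ large. Since $M$ is a von Neumann algebra, this is a projection in $M$, hence an open (in fact clopen) projection in $M^{**}$. Because both $p$ and $p_n$ dominate $q$, so does $p\wedge p_n$, and obviously $p\wedge p_n \leq p$. Finally $\phi(p\wedge p_n)\leq \phi(p_n)\to \phi(q)$, so the inequality $\phi(u)<\phi(q)+\eps$ holds for all sufficiently large $n$.

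For the necessity direction, I would build the sequence recursively. Set $p_0 = 1_M$. Given $p_n\in M$ with $p_n\geq q$, I apply the hypothesis of regularity of $\phi$ at $q$, with the majorant $p_n$ and tolerance $\eps = 1/(n+1)$, to obtain an open projection $u_{n+1}\in M^{**}$ satisfying $p_n \geq u_{n+1} \geq q$ and $\phi(u_{n+1}) < \phi(q) + 1/(n+1)$. Next, Lemma \ref{L:openvN} applied to $u_{n+1}$ and the normal state $\phi$ produces a projection $\widetilde p_{n+1}\in M$ with $\widetilde p_{n+1} \geq u_{n+1}$ and $\phi(\widetilde p_{n+1}) = \phi(u_{n+1})$. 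To maintain monotonicity, I set
\[
p_{n+1} = \widetilde p_{n+1} \wedge p_n \in M.
\]
Then $p_{n+1}\leq p_n$ by construction, and since $\widetilde p_{n+1}$ and $p_n$ both dominate $u_{n+1}$, so does $p_{n+1}$; in particular $p_{n+1}\geq q$. Sandwiching $u_{n+1} \leq p_{n+1} \leq \widetilde p_{n+1}$ and using that $\phi(\widetilde p_{n+1}) = \phi(u_{n+1})$, I obtain $\phi(p_{n+1}) = \phi(u_{n+1}) < \phi(q) + 1/(n+1)$, which tends to $\phi(q)$ as required.

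The main obstacle is in the necessity direction, where two requirements must be reconciled: converting the open projection $u_{n+1}\in M^{**}$ into an actual projection in $M$ without sacrificing the $\phi$-value, and simultaneously keeping the sequence decreasing. Normality of $\phi$ is crucial for the first task via Lemma \ref{L:openvN}; the second is then resolved by the harmless intersection with $p_n$, which preserves domination of $u_{n+1}$ and hence the equality of $\phi$-values.
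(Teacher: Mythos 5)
Your proof is correct and follows essentially the same route as the paper: the sufficiency direction via $u=p\wedge p_n$ is identical, and the necessity direction is the same induction using the regularity hypothesis together with Lemma \ref{L:openvN}. The only (harmless) difference is that in the inductive step the paper applies Lemma \ref{L:openvN} inside the corner algebra $p_nMp_n$, so that the resulting projection is automatically dominated by $p_n$, whereas you apply it in $M$ and then take the meet with $p_n$ afterwards --- your sandwich argument $u_{n+1}\leq p_{n+1}\leq \widetilde p_{n+1}$ correctly recovers $\phi(p_{n+1})=\phi(u_{n+1})$, and this variant even sidesteps the need to check that $u$ remains open relative to the corner.
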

\begin{proof}  Suppose that  there is a decreasing sequence of projections $(p_n)$ in $M$ dominating $q$ such that $\lim_{n\to\infty}\phi(p_n)= \phi(q)$, and that 
$p$  is a projection in $M$ dominating $q$. For each $n$, we let $u_n=p\wedge p_n\in M$, which is clopen.  Since $q \leq u_n  \leq p_n$ we have that $\lim_{n\to\infty} \phi(u_n) =\phi(q)$.

For the other direction, put $p_0=1$.  Assume that we have constructed  projections $p_0,\ldots,p_n\in M$ dominating $q$ such that $p_j\leq p_k$ and $\phi(p_j)\leq \phi(q) + 1/(j+1)$ for every $0\leq k\leq j\leq n$. Consider the von Neumann algebra $N=p_n M p_n$. By regularity, there is an open projection $u = p_n u p_n \in M^{**}$ such that $u\geq q$ and $\phi(u)< \phi(q) + 1/(n+2)$.   Also $u$ is open in $N^{**}$.
Apply  Lemma \ref{L:openvN} to find a projection $p_{n+1}\in N$ such that $p_{n+1}\geq 
u \geq q$ and $\phi(p_{n+1})=\phi(u) < \phi(q) + 1/(n+2)$. Since $p_{n+1}\in N$, we must have that $p_{n+1}\leq p_n$. The existence of the desired sequence then follows by induction.
\end{proof}

\subsection{Subdiagonal algebras}

Let us recall the setting of Arveson's subdiagonal algebras,  referring the reader to \cite{BL2007, ueda2009, BL2018} for additional detail and references. 

Let $M$ be a $\sigma$-finite von Neumann algebra with a faithful normal state $\phi$.   Let $A\subset M$ be a unital weak-$*$ closed subalgebra such that $A+A^*$ is weak-$*$ dense in $M$. Put $D=A\cap A^*$ and let $E:M\to D$ be a (normal faithful) conditional expectation. We say that $A$ is \emph{subdiagonal} in $M$ with respect to $E$ if $E$ is multiplicative on $A$.  Such an algebra
$A$ is   said to be a {\em maximal subdiagonal algebra} if $A$ is properly contained in no larger subdiagonal algebra in $M$ with respect to $E$. 

We now motivate the main result of this section.  Let $\Delta\subset M^*$ denote the set of normal states on $M$. Recall that there is a central projection $z\in M^{**}$ such that $M_*=M^*z$ \cite[Theorem III.2.14]{takesaki2002}.  Let $\psi$ be a state on $M$ and consider its support projection $s_\psi\in M^{**}$, which is defined by
\[
\{x\in M^{**}:\psi(x^*x)=0\}=M^{**}(1-s_\psi).
\]
If we assume that $\psi\in \SG(\Delta)$, then by definition $s_\psi r_\Delta=0$ or $s_\psi\in \AC(\Delta)^\perp$, which by Lemma \ref{L:restnormal} is equivalent to $s_\psi\leq 1-z$. Under the additional requirement that the given faithful normal state $\phi$ be tracial, Ueda showed in \cite{ueda2009} that there always exists a closed projection $p\in \AC(\Delta)^\perp\cap A^{\perp\perp}$ such that $s_\psi\leq p$. 
This is a `partial Amar--Lederer theorem'.  At the cost of great technical difficulty, the tracial condition on $\phi$ was removed in \cite{BL2018}.

We now improve this result, where we replace the support projection of a singular state by any $\phi$-null regular projection. Fortunately, the very lengthy and 
delicate argument found in \cite{BL2018} can be adapted verbatim to our situation, so we simply need to verify that the required conditions for its application are met.

\begin{theorem}  \label{T:ALstate}   
Let $M$ be a $\sigma$-finite von Neumann algebra with a faithful normal state $\phi$. Let $\Delta$ denote the set of normal states on $M$. Let $A\subset M$ be a maximal subdiagonal algebra. Then, the following statements hold.
\begin{enumerate}[{\rm (i)}]
\item $A$ has the  Amar--Lederer property in $M$ with respect to $\Delta$.  
Indeed every regular projection that is $\phi$-null (or in $\AC(\Delta)^\perp$) is dominated by a closed peak 
projection in $\AC(\Delta)^\perp\cap A^{\perp\perp}$. 
\item $A$ has the  Forelli property in $M$ with respect to $\Delta$.
\end{enumerate}
\end{theorem}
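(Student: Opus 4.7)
The plan is to derive (i) by invoking the technical construction from \cite{BL2018} (itself a generalization of \cite{ueda2009}), after first verifying that our hypothesis supplies the input it requires. Part (ii) will then fall out of (i) combined with Theorem \ref{T:ForelliFM}.

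For (i), I would first reconcile the two equivalent formulations of the null hypothesis: by Lemma \ref{L:restnormal} we have $\AC(\Delta)=M_{*}$, so $\AC(\Delta)^{\perp}=(1-z)M^{**}$, and Lemma \ref{L:predualphi} (applied to the faithful normal state $\phi$) then identifies the projections lying there as exactly the $\phi$-null projections in $M^{**}$. Next, given such a projection $q$ that is regular for $\phi$, Proposition \ref{P:sequenceproj} produces a decreasing sequence $(p_n)$ of projections in $M$, all dominating $q$, with $\phi(p_n)\to\phi(q)=0$. This decreasing sequence is precisely the combinatorial input used in \cite{BL2018}: from it the authors construct a closed $A$-peak projection $r\in \AC(\Delta)^{\perp}\cap A^{\perp\perp}$ with $r\geq \bigwedge_n p_n \geq q$. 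The maximality of the subdiagonal algebra $A$, the $\sigma$-finiteness of $M$, and the faithfulness of the normal state $\phi$ (without tracial assumption) are exactly the standing hypotheses of \cite{BL2018} and are in force here.

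For (ii), I would invoke Theorem \ref{T:ForelliFM}. By Proposition \ref{P:stateapproxopen}(i) every closed projection is automatically regular for $\phi$, so part (i) applied to closed projections in $\AC(\Delta)^{\perp}$ shows that each such projection is dominated by a projection (in fact a closed peak projection) in $\AC(\Delta)^{\perp}\cap A^{\perp\perp}$. This verifies condition (vi) of Theorem \ref{T:ForelliFM}; since $\Delta$ is norm-closed in $M^{*}$, the implication (vi) $\Rightarrow$ (vii) of that theorem yields the Forelli property.

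The main obstacle, and really the substance of part (i), is to confirm that the lengthy and delicate construction in \cite{BL2018} depends only on the data of a decreasing sequence $(p_n)\subset M$ with $\phi(p_n)\to 0$, and not covertly on the additional structure that $\bigwedge_n p_n$ arises as the support projection of a singular normal state. Verifying this requires a careful walk through that argument; our Proposition \ref{P:sequenceproj} is precisely the device that isolates the feature making this passage work, and so in effect reduces the proof to this verification.
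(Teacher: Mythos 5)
Your proposal follows essentially the same route as the paper's proof: the same identification of $\AC(\Delta)^{\perp}$ with the $\phi$-null projections via Lemmas \ref{L:restnormal} and \ref{L:predualphi}, the same use of Proposition \ref{P:sequenceproj} to manufacture the decreasing sequence $(p_n)$ that serves as input to the construction of \cite{BL2018}, and the same appeal to condition (vi) of Theorem \ref{T:ForelliFM} for the Forelli property. The ``main obstacle'' you flag at the end is exactly the point the paper also rests on (it asserts that the decreasing sequence with $\phi(p_n)\to 0$, hence $p_n\searrow 0$ weak-$*$ in $M$, is the only input the argument of \cite{BL2018} actually uses), so your account matches the paper's in both substance and level of detail.
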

 \begin{proof}  
 (i) 
Note that $\AC(\Delta)=\AC(\phi)$ by virtue of Lemmas \ref{L:restnormal} and \ref{L:predualphi}. In particular, it follows from \cite[Lemma 2.4]{CT2023Henkin} that a projection $p\in M^{**}$  lies in $\AC(\Delta)^{\perp}$ if and only if $\phi(p)=0$. Invoking Proposition \ref{P:stateapproxopen}, it thus suffices to fix a given projection $q\in M^{**}$ such that $\phi(q)=0$ and with the property that $\phi$ is regular at $q$, and to show that there then exists a closed projection $p\in M^{**}$ lying in $A^{\perp\perp}$ such that $\phi(p)=0$ and $p\geq q$.   By Proposition \ref{P:sequenceproj}
 there is a decreasing sequence of projections $(p_n)$ in $M$ dominating $q$ such that $\lim_{n\to\infty}\phi(p_n)=0$.  It follows that $p_n \searrow 0$ weak* in $M$ (since 
 if $p_n \searrow r$ then $\phi(r) = 0$ and so $r = 0$).  Let $p_0$ be the 
 infimum (and weak* limit) of the $p_n$  in $M^{**}$, a closed projection with $\phi(p_0) = 0$.   
 
 We now have completed our generalization of the first (von Neumann algebraic) part of the main theorem in \cite{ueda2009} or \cite{BL2018}.   Fortunately 
the above is all that is needed to be changed in the rest of the proofs of the latter.  
By the very technical proof comprising pages 8223--8228 in \cite{BL2018} we obtain 
 a contraction  $b \in A$ with $b p_0 = p_0 = p_0 b$.   By the basic theory of peak projections \cite{hay2007,blecher2013} the real positive element 
  $a = (1+b)/2$ peaks at a projection $p \geq p_0$, with $a^n \to p$ weak* in $M^{**}$.
  As in \cite{ueda2009} we have $a^n \to 0$ weak* in $M$, so that $p \in \AC(\Delta)^\perp \cap A^{\perp\perp}$.  By construction $p$ is a peak projection. 

For the last assertion, simply note that (vi) in Theorem \ref{T:ForelliFM}  implies the  Forelli property.
\end{proof}

Thus for example if we assume in addition that $\phi$ is tracial then every type $F_\sigma$ projection in $\AC(\Delta)^\perp$ is dominated by a closed 
projection in $\AC(\Delta)^\perp\cap A^{\perp\perp}$.

As noted in the proof we may replace  $\Delta$ by $\{ \phi \}$ with no change in the conclusions. 

Finally, we remark that statement (i) above 
 is almost certainly not provable for general (even commutative) von Neumann algebras.  Indeed there is a set theoretic
obstruction as is noted in the last section of \cite{BL2018}.


\medskip

We thank Ken Davidson and Michael Hartz for several   helpful discussions on the topic of peak interpolation and their work on Hilbert function algebras \cite{DH2023}.

\bibliography{smallproj} 
\bibliographystyle{plain}

\end{document}